\documentclass[10pt]{amsart}
\usepackage{amsmath,amssymb,amsthm}
\usepackage{enumerate}                
\usepackage{srcltx}                   
\usepackage{cite}                     
\usepackage{xspace}                   
\usepackage[margin=1in]{geometry}     
\usepackage{mathabx}                  
\usepackage{mathtools}                
\usepackage{ctable}                   
\usepackage{multirow}                 

\newtheorem{theorem}{Theorem}[section]

\newtheorem{corollary}[theorem]{Corollary}
\newtheorem{lemma}[theorem]{Lemma}

\theoremstyle{definition}
\newtheorem{definition}[theorem]{Definition}
\theoremstyle{remark}
\newtheorem{remark}[theorem]{Remark}

\newcommand*{\fp}[3]{\Asterisk_{#1 \in #2} #3_{#1}}
\newcommand*{\freeprod}{\fp{a}{A}{\Gamma}}
\newcommand*{\closure}[2][3]{{}\mkern#1mu\overline{\mkern-#1mu#2}}
\newcommand*{\wlen}[1]{||#1||}
\newcommand*{\card}[1]{|#1|}
\newcommand*{\freepr}{\mathbin{*}}

\newlength{\barheight}
\setlength{\barheight}{1.8pt}
\newsavebox{\tempbox}
\newsavebox{\tempvbox}
\savebox{\tempvbox}{\vbox to 7pt{}}

\def\daggerform#1{%
  \def\deta##1{##1}%
  \def\detai##1{##1}%
  \def\dmu##1{##1}%
  \def\dmui##1{##1}%
  \def\dnu##1{##1}%
  \def\dnui##1{##1}%
  \savebox{\tempbox}{\vphantom{$\displaystyle#1$}}%
  \def\dmu##1{\underbracket[0.3pt][\barheight]{##1\usebox{\tempbox}}_{\mu\usebox{\tempvbox}}}%
  \def\dmui##1{\underbracket[0.3pt][\barheight]{##1\usebox{\tempbox}}_{\mu^{-1}\usebox{\tempvbox}}}%
  \def\dnu##1{\underbracket[0.3pt][\barheight]{##1\usebox{\tempbox}}_{\nu\usebox{\tempvbox}}}%
  \def\dnui##1{\underbracket[0.3pt][\barheight]{##1\usebox{\tempbox}}_{\nu^{-1}\usebox{\tempvbox}}}%
  \def\deta##1{\underbracket[0.3pt][\barheight]{##1\usebox{\tempbox}}_{\eta\usebox{\tempvbox}}}%
  \def\detai##1{\underbracket[0.3pt][\barheight]{##1\usebox{\tempbox}}_{\eta^{-1}\usebox{\tempvbox}}}%
#1}

\def\symform#1{%
  \def\lpart##1{##1}%
  \def\rpart##1{##1}%
  \savebox{\tempbox}{\vphantom{$\displaystyle#1$}}%
  \def\lpart##1{\underbracket[0.3pt][\barheight]{##1\usebox{\tempbox}}}%
  \def\rpart##1{\underbracket[0.3pt][\barheight]{##1\usebox{\tempbox}}}%
#1}

\begin{document}
\title{Automatic continuity for homomorphisms into free products}
\keywords{Automatic continuity, free product}

\author{Konstantin Slutsky}
\address{Department of Mathematical Sciences\\
University of Copenhagen\\
Universitetsparken 5\\
2100 K\o benhavn \O\\
Denmark}
\email{kslutsky@gmail.com}
\thanks{Research supported by Denmark's Council for Independent Research (Natural Sciences Division), grant
  no. 10-082689/FNU}
\begin{abstract}
  A homomorphism from a completely metrizable topological group into a free product of groups whose image is not
  contained in a factor of the free product is shown to be continuous with respect to the discrete topology on the
  range.  In particular, any completely metrizable group topology on a free product is discrete.
\end{abstract}

\maketitle

\section{Introduction and background}
\label{sec:introduction}

The main concept of automatic continuity is to establish conditions on topological groups \( G \) and \( H \) under
which a homomorphism \( \phi : G \to H \) is necessarily continuous.  In many cases one of the groups is assumed to be
very special, while the conditions on the other group are relatively mild.  An important example of this phenomenon is
a theorem of R.~M.~Dudley\cite{MR0136676} from 1961:
\begin{theorem}[Dudley]
  \label{thm:Dudley-theorem}
  Any homomorphism from a completely metrizable group or a Hausdorff locally compact group into a free group with the
  discrete topology is continuous. 
\end{theorem}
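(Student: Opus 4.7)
The plan is to argue by contradiction. Assume $\phi\colon G\to F$ is a discontinuous homomorphism into the free group $F$; I will handle the completely metrizable case, the locally compact Hausdorff case being analogous. Using Birkhoff--Kakutani, equip $G$ with a compatible left-invariant metric $d$, and let $\|\cdot\|$ denote the reduced word length on $F$ with respect to a free basis. The three properties I will use are: $\|\cdot\|$ is subadditive; each ball $\{w\in F:\|w\|\le N\}$ is finite; and $\|w^k\|\to\infty$ as $|k|\to\infty$ for every $w\ne e$.

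Discontinuity means $\ker\phi$ is not open, so there is a sequence $g_n\to e$ in $G$ with $\phi(g_n)\ne e$. The first step is to arrange $\|\phi(g_n)\|\to\infty$. If $\|\phi(g_n)\|$ stays bounded, finiteness of balls forces a subsequence on which $\phi(g_n)=w$ is constant with $w\ne e$; combining $\|w^k\|\to\infty$ with the fact that, for each fixed $k$, $g_n^k\to e$ as $n\to\infty$, a diagonal extraction produces $h_k=g_{n_k}^k\to e$ with $\phi(h_k)=w^k$ of unbounded length. After renaming and thinning, I may assume $d(g_n,e)<2^{-n}$ and $\|w_n\|\to\infty$, where $w_n:=\phi(g_n)$. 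A further pass to a subsequence, using standard small-cancellation/Nielsen-style reasoning for long words in a free group, yields a constant $c>0$ such that every reduced product $w_{n_1}^{\varepsilon_1}\cdots w_{n_k}^{\varepsilon_k}$ with $n_1<\cdots<n_k$ and $\varepsilon_i\in\{\pm 1\}$ has length at least $c\sum_i\|w_{n_i}\|$.

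By left-invariance and $\sum 2^{-n}<\infty$, for each $\varepsilon\in\{0,1\}^{\mathbb{N}}$ the partial products $g_1^{\varepsilon_1}\cdots g_N^{\varepsilon_N}$ are $d$-Cauchy, hence converge in $G$ to an element $h_\varepsilon$. The assignment $\varepsilon\mapsto\phi(h_\varepsilon)$ sends an uncountable set into the countable group $F$, so some $v\in F$ is attained on an uncountable fiber. Taking $\varepsilon\ne\varepsilon'$ in this fiber and letting $n_0$ be the first index where they differ, the equation $\phi(h_\varepsilon)=\phi(h_{\varepsilon'})$ becomes, after cancelling the common prefix, a relation of the form $w_{n_0}\phi(\eta)=\phi(\eta')$ for tails $\eta,\eta'\in G$ converging to $e$. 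Iterating the pigeonhole step to build a perfect subtree of $\{0,1\}^{\mathbb{N}}$ on which $\phi(h_\varepsilon)$ is constant, and combining the resulting abundance of tail-relations with the non-cancellation bound of the previous paragraph, forces $\|w_{n_0}\|$ to be arbitrarily small, contradicting $\|w_{n_0}\|\ge 1$.

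The hard part is this last step: since $\phi$ is not assumed continuous, I have no a priori formula for the tail values $\phi(\eta),\phi(\eta')$ in $F$, only the relations imposed by repeated coincidences $\phi(h_\varepsilon)=\phi(h_{\varepsilon'})$. The cancellation estimate must be robust enough that the unknown tails cannot ``absorb'' the fixed length $\|w_{n_0}\|$; this is where the small-cancellation extraction earns its keep. An alternative, more elegant route proceeds via Baire category: cover $G$ by the countably many preimages $\phi^{-1}(\{w:\|w\|\le N\})$, deduce by Baire that $\ker\phi$ is non-meager, and apply Pettis' lemma to conclude it is open. This shortcut, however, requires separately verifying that $\ker\phi$ has the Baire property, which does not follow automatically from the hypotheses and is itself the crux of that approach.
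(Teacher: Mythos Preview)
Your proposal has a genuine gap at exactly the point you flag as ``the hard part,'' and the gap is not closable along the lines you suggest. From a collision $\phi(h_\varepsilon)=\phi(h_{\varepsilon'})$ you extract a relation $w_{n_0}\phi(\eta)=\phi(\eta')$ where $\eta,\eta'$ are \emph{limits} of infinite products. Since $\phi$ is discontinuous, $\phi(\eta)$ and $\phi(\eta')$ are completely undetermined: for any $w_{n_0}$ and any value of $\phi(\eta')$ whatsoever, the equation is satisfied by $\phi(\eta)=w_{n_0}^{-1}\phi(\eta')$, so no contradiction arises. Building a perfect tree of collisions only produces more relations of the same useless shape---each still involves unknown tail images, and your non-cancellation estimate on \emph{finite} products $w_{n_1}^{\varepsilon_1}\cdots w_{n_k}^{\varepsilon_k}$ says nothing about these infinite-product limits. (The cancellation estimate itself is also asserted rather than proved; extracting a subsequence satisfying such a uniform lower bound is not ``standard'' and would need a real argument.) The Baire/Pettis alternative you mention is likewise blocked for the reason you identify: nothing in the hypotheses gives $\ker\phi$ the Baire property.

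The paper's (i.e.\ Dudley's) argument avoids this trap by a recursive construction rather than a cardinality one. One builds $h_m=\lim_n \widetilde h_{n,m}$ so that the limits themselves satisfy the \emph{algebraic} recursion $h_m=g_m\,h_{m+1}^{\,r_m}$, with $r_m$ growing like $m+\sum_{l\le m}\|\phi(g_l)\|$. The key property of the word length on a free group, $\|\gamma^n\|\ge\max\{n,\|\gamma\|\}$ for $\gamma\ne e$, then yields $\|\phi(h_m)^{r_{m-1}}\|\ge r_{m-1}$ whether or not $\phi(h_{m+1})$ is trivial: if $\phi(h_{m+1})\ne e$ the large power does the work, and if $\phi(h_{m+1})=e$ then $\phi(h_m)=\phi(g_m)\ne e$ by construction. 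A telescoping estimate gives $\|\phi(h_1)\|\ge m$ for all $m$. The point is that the recursion transports information across the limit without ever needing to evaluate $\phi$ on a tail; your infinite-product scheme has no such mechanism.
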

In fact Dudley's result is more general: the target group does not necessarily have to be a free group, it is enough to
assume that it admits a special length function (which is called a norm in \cite{MR0136676}).  In the case of a free
group this length function is given by the length of the reduced form of an element.  An important corollary of this
theorem is that any Hausdorff locally compact or completely metrizable group topology on a free group is necessarily
discrete.  In 1976 S.~A.~Morris and P.~Nickolas \cite{MR0399341} investigated continuity of homomorphisms
\( \phi : G \to \Gamma \) between a Hausdorff locally compact \( G \) and free products \( \Gamma = A \freepr B \).  
\begin{theorem}[Morris--Nickolas]
  \label{thm:morris-nickolas}
  Given a homomorphism \( \phi : G \to A \freepr B \) from a Hausdorff locally compact group into a free
  product with the discrete topology either \( \phi \) is continuous or its image \( \phi(G) \) is contained (up to a
  conjugation) in one of the factors.  In particular, any Hausdorff locally compact group topology on a non-trivial free
  product in discrete.
\end{theorem}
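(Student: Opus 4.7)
My plan is to reduce the claim to Dudley's theorem (Theorem~\ref{thm:Dudley-theorem}) by locating a rank-$2$ free subgroup inside $\phi(G)$ whose dynamics can be used to bound the whole image. Assume that $\phi(G)$ is not contained in any conjugate of $A$ or of $B$; then it suffices to prove that $\ker \phi$ is open in $G$, since $A \freepr B$ carries the discrete topology and so openness of the kernel is equivalent to continuity of $\phi$.

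First I would find a hyperbolic element in $\phi(G)$. The free product $A \freepr B$ acts on its Bass--Serre tree $T$ with vertex stabilizers the conjugates of $A$ and $B$ and with trivial edge stabilizers; an element of $A \freepr B$ is elliptic precisely when conjugate into a factor, and otherwise hyperbolic with translation length equal to the syllable length of its cyclically reduced form. Under our hypothesis $\phi(G)$ has no global fixed vertex in $T$, and since edge stabilizers are trivial, a standard fixed-point theorem for tree actions then furnishes a hyperbolic element $w \in \phi(G)$.

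Next I would produce a free subgroup. Choose $h \in \phi(G)$ whose image does not preserve the axis of $w$ (such $h$ must exist, for otherwise $\phi(G)$ would fix an end of $T$, contradicting our hypothesis). A standard ping-pong argument then shows that $w^{N}$ and $h w^{N} h^{-1}$ generate a rank-$2$ free subgroup $F \leq \phi(G)$ for all sufficiently large $N$.

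Finally I would transfer continuity from $F$ to $\phi$. Apply Dudley's theorem to the restriction $\phi^{-1}(F) \to F$, using that $F$ is free and hence admits the word-length norm. The delicate point is that $\phi^{-1}(F)$ need not be locally compact in the subspace topology; one therefore works inside $G$ with the syllable length $\ell$ on $A \freepr B$, arguing that a compact symmetric neighborhood $K \ni e$ must map to a set of $\ell$-bounded cyclic length in $\phi(G)$, using the linear growth of $\ell$ on powers of hyperbolic elements combined with conjugation by the free generators of $F$. This forces $\phi(K)$ to be finite, so $\ker \phi$ contains a neighborhood of the identity. The main obstacle is the presence of elliptic elements of $A \freepr B$: they have bounded $\ell$-powers and so prevent a direct application of Dudley's theorem to $\phi$. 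The ping-pong construction is precisely the device that circumvents this by supplying a hyperbolic subgroup $F$ on which the linear growth hypothesis of Dudley's theorem holds, and the hardest technical step is propagating the resulting control from $F$ back to the entirety of $\phi(G)$.
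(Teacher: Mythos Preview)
First, note that the paper does not give its own proof of this statement: Theorem~\ref{thm:morris-nickolas} is quoted as background from Morris--Nickolas \cite{MR0399341}, and the paper's new contribution is the completely metrizable analogue (Theorem~\ref{thm:continuity-into-free-products}). So there is no in-paper argument to compare against directly; the paper's method for the completely metrizable case proceeds via word-length combinatorics and Dudley-type recursions rather than via tree geometry.

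On the merits of your sketch, the first two steps---locating a hyperbolic element of $\phi(G)$ via the Bass--Serre tree and producing a rank-$2$ free subgroup $F \le \phi(G)$ by ping-pong---are sound and standard. The genuine gap is in the third step, and you have in effect flagged it yourself without resolving it. You cannot apply Dudley's theorem to $\phi^{-1}(F)\to F$ because $\phi^{-1}(F)$ need not be closed in $G$, hence need not be locally compact. Your proposed workaround does not close this gap: bounding the \emph{translation length} (what you call ``cyclic length'') of the elements of $\phi(K)$ for a compact symmetric neighbourhood $K\ni e$ does not force $\phi(K)$ to be finite. As soon as one of the factors $A$, $B$ is infinite there are infinitely many elliptic elements of translation length~$0$ in $A\freepr B$, and nothing in ``linear growth of $\ell$ on powers of hyperbolic elements combined with conjugation by the free generators of $F$'' prevents $\phi(K)$ from meeting infinitely many of them. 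That device controls only translation length, not syllable length, and it is precisely the elliptic elements---on which translation length gives no information---that obstruct a naive Dudley argument (this is exactly the difficulty the paper spends Sections~\ref{sec:length-function}--\ref{sec:more-calc-free} circumventing in the completely metrizable case). The original Morris--Nickolas proof, and Alperin's extension, use the structure theory of locally compact groups in an essential way (the connected component $G_{0}$ acts trivially on any tree, and $G/G_{0}$ admits compact open subgroups, which must fix vertices); your outline omits this ingredient and does not supply a substitute.
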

The next step was made six years later by R.~Alperin in \cite{MR661438}, where he studied actions of Hausdorff locally
compact groups on trees in the sense of Bass--Serre theory.  Automatic continuity implications of Alperin's work are
summarized in the following theorem.

\begin{theorem}[Alperin]
  \label{thm:alperin}
  Let \( G \) be a locally compact group and \( G_{0} \) be the connected component of the identity.
  \begin{enumerate}[(i)]
  \item If \( G \) can be written in a non-trivial way as an amalgam \( A \freepr_{C} B \), then \( A
    \), \( B \) and \( C \) are necessarily open.
  \item\label{item:locally-compact-alpering-free-prod} If \( G/G_{0} \) is compact and \( \phi : G \to A \freepr B \) is
    a homomorphisms into a free product, then up to a conjugation \( \phi(G) \) is contained in one of the
    factors.
  \end{enumerate}
\end{theorem}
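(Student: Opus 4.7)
The plan is to view both parts through Bass--Serre theory. To the free product \( A \freepr B \) (resp.\ amalgam \( A \freepr_{C} B \)) one associates a simplicial tree \( T \) on which the group acts without inversions, with vertex stabilizers conjugate to \( A \) or \( B \) and edge stabilizers either trivial or conjugate to \( C \). A subgroup is contained in a conjugate of one of the factors if and only if it has a global fixed vertex on \( T \); in particular, by Serre's fixed-point theorem, any finite subgroup fixes a vertex.

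For part~(\ref{item:locally-compact-alpering-free-prod}), I would apply the Morris--Nickolas dichotomy (Theorem~\ref{thm:morris-nickolas}) to \( \phi : G \to A \freepr B \). If \( \phi(G) \) is already conjugate into one of the factors, there is nothing further to prove. Otherwise \( \phi \) is continuous with respect to the discrete topology on \( A \freepr B \), so \( \ker \phi \) is open in \( G \). Since \( \phi|_{G_{0}} \) is a continuous homomorphism from a connected group to a discrete one, it is trivial, forcing \( G_{0} \subseteq \ker \phi \). Hence \( \phi(G) \) is a quotient of the compact group \( G/G_{0} \); being also discrete it must be finite, and Serre's fixed-point theorem on the Bass--Serre tree of \( A \freepr B \) places it inside a vertex stabilizer, i.e.\ in a conjugate of \( A \) or \( B \).

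For part~(i), the strategy is to study the action of \( G = A \freepr_{C} B \) on the Bass--Serre tree of its own amalgam decomposition and to show that the vertex stabilizers \( A \), \( B \) -- and hence the edge stabilizer \( C = A \cap B \) -- have non-empty interior in \( G \). The main obstacle is that this action carries no a priori continuity property with respect to the topology on \( G \), so one cannot simply invoke the principle that stabilizers of points in a discrete \( G \)-set are open. I would instead deploy a Baire category argument exploiting the reduced normal form of the amalgam: decompose a compact neighbourhood of the identity according to the combinatorial type of the first letters of the normal form, show that the resulting pieces are closed in \( G \) (the delicate step, which amounts to verifying that a convergent net in \( G \) interacts well with the normal form), and conclude from local compactness that the piece corresponding to the trivial normal form -- namely \( A \) -- has non-empty interior and is therefore open. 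Once one of \( A \), \( B \), \( C \) is known to be open, the openness of the remaining two follows routinely from the amalgam relations.
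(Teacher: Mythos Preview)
The paper does not contain a proof of this theorem: it is stated as a background result attributed to Alperin \cite{MR661438}, alongside the theorems of Dudley, Morris--Nickolas, and Rosendal, to set the historical context for the paper's own contribution. There is therefore no ``paper's own proof'' to compare your proposal against.

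That said, a few comments on the proposal itself. Your argument for part~(\ref{item:locally-compact-alpering-free-prod}) is correct and clean: invoking Theorem~\ref{thm:morris-nickolas} to force continuity in the non-trivial case, deducing \( G_{0} \subseteq \ker\phi \), and then using compactness of \( G/G_{0} \) together with openness of \( \ker\phi \) to conclude that \( \phi(G) \) is finite, whence Serre's fixed-point theorem applies. This is a legitimate route, chronologically consistent with the literature, though Alperin's own approach works directly via Bass--Serre theory and the structure of locally compact groups rather than quoting Morris--Nickolas as a black box.

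Your treatment of part~(i), however, is not a proof but a programme, and the programme has a genuine gap. You correctly flag the core difficulty---the action on the Bass--Serre tree carries no a~priori continuity---but your proposed workaround (partition a compact neighbourhood of the identity by the first letters of the normal form and argue that the pieces are closed) is exactly the step that needs an idea, and you supply none. There is no evident reason why convergence of a net in \( G \) should respect the initial segment of the amalgam normal form; indeed, the whole point of the theorem is that the interaction between the topology and the combinatorics is not automatic. Alperin's actual proof proceeds instead through the structure theory of locally compact groups (compact generation, reduction via open compact subgroups and approximation by Lie groups), which is precisely the machinery the present paper notes is unavailable in the completely metrizable setting. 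If you want to carry out part~(i), you will need either to engage with that structure theory or to supply a concrete mechanism for the closedness claim.
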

Methods used by Morris--Nickolas and Alperin rely on the structure theory for locally compact groups, and
therefore are not applicable for general completely metrizable groups.  C.~Rosendal \cite{MR2849256} developed further
the Bass--Serre theory approach and proved the following. 
\begin{theorem}[Rosendal]
  \label{thm:rosendal-summary}
  Let \( G \) be a completely metrizable group, and let \( \mathbb{D} \) be the set of elements that generate a finite
  or non-discrete subgroup:
  \[  \mathbb{D} = \left\{ g \in G : \langle g \rangle \ \textrm{is finite or non-discrete} \right\}.  \]
    \begin{enumerate}[(i)]
    \item If \( \mathbb{D} \) is somewhere dense and if \( G \) is written in a non-trivial way as an amalgam
      \( A \freepr_{C} B \), then the three subgroups \( A \), \( B \) and \( C \) are open in \( G \).
    \item If \( \mathbb{D} \) is dense in \( G \) and if \( \phi : G \to A \freepr B \) is a homomorphism into a
      free product, then up to a conjugation \( \phi(G) \) is contained in one of the factors.
    \end{enumerate}
\end{theorem}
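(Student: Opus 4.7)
The plan is to use Bass--Serre theory combined with Baire category arguments. For part (ii), let $T$ be the Bass--Serre tree associated to the free product $A \freepr B$: its vertex stabilizers are the conjugates of $A$ and $B$, and every element of $A \freepr B$ acts either elliptically (fixing a vertex) or hyperbolically (translating along an axis by some positive translation length). Via $\phi$ the group $G$ acts isometrically on $T$ (after barycentric subdivision to remove possible edge inversions), and the goal is to show the action admits a global fixed vertex, since the stabilizer of any vertex is conjugate in $A \freepr B$ to $A$ or to $B$.

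The first step is to show that $\phi(g)$ is elliptic on $T$ for every $g \in \mathbb{D}$. If $\langle g \rangle$ is finite then $\phi(g)$ has finite order and fixes a vertex by the Bruhat--Tits fixed point theorem. If $\langle g \rangle$ is non-discrete, let $H$ be its closure in $G$, a completely metrizable abelian subgroup. Suppose, for contradiction, that $\phi(g)$ were hyperbolic; then $\langle \phi(g) \rangle \cong \mathbb{Z}$, so $\phi$ restricted to $H$ is a homomorphism from a completely metrizable group into an infinite cyclic, hence free, group, and Theorem \ref{thm:Dudley-theorem} forces it to be continuous. Non-discreteness of $\langle g \rangle$ produces a sequence $g^{n_k} \to e$ with $n_k \neq 0$, so $\phi(g)^{n_k} \to e$ in the discrete group $\mathbb{Z}$; hence $\phi(g)^{n_k} = e$ eventually, contradicting $\phi(g)$ having infinite order.

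The second step promotes this pointwise information on $\mathbb{D}$ to global boundedness of some orbit. Fix a base vertex $v_0 \in T$ and write $G = \bigcup_n F_n$ where $F_n = \{ g \in G : d(\phi(g) v_0, v_0) \le n \}$; each $F_n$ is a union of left cosets of $S = \phi^{-1}(\operatorname{Stab}(v_0))$. Density of $\mathbb{D}$ together with ellipticity of $\phi(\mathbb{D})$, combined with the fact that $G$ is Baire, should yield that some $F_n$ is non-meager with the Baire property; a Pettis-type argument then gives an open neighborhood $U$ of the identity inside $F_n F_n^{-1}$, so that $\phi(U)$ is contained in a bounded subset of $T$. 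Writing any $g \in G$ as a product of translates of elements of $U$ and iterating, one obtains a bounded $\phi(G)$-orbit, and the combinatorial circumcenter construction in trees produces a global fixed vertex for $\phi(G)$. Part (i) follows by running the same machinery on the Bass--Serre tree of the amalgam $G = A \freepr_C B$ itself, where openness of the vertex and edge stabilizers $A$, $B$, $C$ is the direct translation of the neighborhood $U$ into the statement.

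The main obstacle is the promotion step. Because $\phi$ is not assumed continuous, the sets $F_n$ are not obviously Borel, and establishing that they have the Baire property (likely by exhibiting them as analytic sets or as projections of closed sets built from $\mathbb{D}$) requires careful descriptive set theory. A further difficulty is that when $A$ or $B$ is infinite the tree $T$ is not locally finite, so ``bounded orbit implies fixed vertex'' must be argued combinatorially rather than via compactness, and one must verify that the local information encoded by $U$ really upgrades to global boundedness of the $\phi(G)$-orbit of $v_0$.
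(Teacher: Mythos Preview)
The paper does not contain a proof of this statement; Theorem~\ref{thm:rosendal-summary} is quoted from Rosendal~\cite{MR2849256} as background in the introduction, and no argument for it appears anywhere in the text, so there is nothing in the paper to compare your attempt against. The paper does remark that Rosendal ``developed further the Bass--Serre theory approach'' initiated by Alperin, and your proposal is precisely in that spirit, so at the level of overall strategy you are on the track Rosendal actually took.

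On the substance of your sketch: the ellipticity argument for \( g \in \mathbb{D} \) needs one more line. When \( \langle g \rangle \) is non-discrete and \( H \) is its closure, you assert that \( \phi|_{H} \) lands in an infinite cyclic group, but this is not automatic from \( \langle \phi(g) \rangle \cong \mathbb{Z} \) alone, since \( \phi(H) \) could a priori be larger than \( \langle \phi(g) \rangle \). What makes it true is that \( \phi(H) \) is abelian and contains the hyperbolic element \( \phi(g) \), and the centraliser in \( A \freepr B \) of an element not conjugate into a factor is infinite cyclic; with that observation Dudley's theorem applies as you say. The obstacles you flag at the end are genuine: the promotion from ``\( \phi \) is pointwise elliptic on the dense set \( \mathbb{D} \)'' to ``\( \phi(G) \) has a global fixed vertex'' is the substantive part of Rosendal's argument, and it does not follow from a naive Pettis-type computation on the sets \( F_{n} \), whose regularity is indeed unclear. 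You have correctly located the difficulty but not resolved it; for the actual mechanism one must consult~\cite{MR2849256}.
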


The aforementioned articles can be put into a table: 

\ctable[
botcap,
caption = {Automatic continuity results for homomorphisms \( \phi : G \to \Gamma \).},
captionskip = 0.5ex,
label = table:results-summary,
pos = ht,
]{rlcc}{}
{ \FL
& \multirow{2}{*}{The group \( \Gamma \) is} & The group \( G \) is & The group \( G \) is  \NN
&                                        & locally compact      & completely metrizable \ML 
\addlinespace[1.5ex]
& a free group                           & Dudley 1961           & Dudley 1961 \NN \addlinespace[0.5ex]
& a free product \( A \freepr B \) \quad & Morris--Nickolas 1976 & --- \NN \addlinespace[0.5ex]
& an amalgam \( A \freepr_{C} B \)        & Alperin 1982          & Rosendal 2011 \LL \addlinespace[0.5ex]
}

Our goal in this paper is to fill the gap in the table and to prove the main result of Morris--Nickolas \cite{MR0399341}
for completely metrizable groups.  

We would like to mention that none of the results in the table fully imply any other result in it.  For example, let us
take a locally compact group \( G \) and a homomorphism \( \phi : G \to F \) into a free group \( F \).  If we view the
free group \( F \) as a free product of copies of \( \mathbb{Z} \) and apply Morris--Nickolas' Theorem (which is, in
fact, true for any number of factors in the free product), then we may conclude that either \( \phi \) is continuous or
\( \phi(G) \) is contained in a cyclic subgroup of \( F \).  But to get Dudley's Theorem we still need to know that all
homomorphisms \( \phi: G \to \mathbb{Z} \) are continuous.

\bigskip

We now state our main results.  Let \(\Gamma = \freeprod \) and let \( \phi : G \to \Gamma\) be a homomorphism
from a completely metrizable group into a free product of non-trivial groups.  We say that \emph{the image of \( \phi \)
  is contained in a factor of \( \Gamma \)} if there are \( a \in A \) and \( \gamma \in \Gamma \) such that
\( \phi(G) \subseteq \gamma \Gamma_{a} \gamma^{-1} \).  We shall prove (see Theorem
\ref{thm:homomorphisms-into-free-products-are-continuous-or-contain-in-a-factor} below)

\begin{theorem}
  \label{thm:continuity-into-free-products}
  Any homomorphism \( \phi : G \to \freeprod \) from a completely metrizable group into a free product with the discrete
  topology is continuous unless its image is contained in one of the factors.
\end{theorem}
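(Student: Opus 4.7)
The plan is to argue by contradiction: suppose \(\phi\) is discontinuous and \(\phi(G)\) is not contained in any conjugate \(\gamma \Gamma_{a}\gamma^{-1}\) of a factor.  The natural invariant on \(\Gamma\) is the syllable length \(\wlen{\cdot}\) of the reduced form.  It is subadditive, but fails Dudley's power law \(\wlen{\gamma^{n}} = |n|\,\wlen{\gamma}\) whenever \(\gamma\) lies in a single factor.  This is precisely why Theorem \ref{thm:Dudley-theorem} cannot be applied directly and what the non-containment hypothesis must repair.

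The first step is to extract from the non-containment hypothesis an element \(g_{0}\in G\) with \(\phi(g_{0})\) cyclically reduced of syllable length at least two, using the standard fact that an element of a free product is conjugate into a factor if and only if its cyclically reduced form has length one.  For such \(\phi(g_{0})\) one has \(\wlen{\phi(g_{0})^{k}}=k\,\wlen{\phi(g_{0})}\), and viewed as an isometry of the Bass--Serre tree of \(\Gamma\), \(\phi(g_{0})\) is hyperbolic with a well-defined axis and translation length \(\wlen{\phi(g_{0})}\).  This element will serve as the ``stretcher'' in the main construction.

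Next, I would run a twisted version of Dudley's classical Baire category argument.  Completeness of \(G\) combined with discontinuity yields a sequence \(h_{n}\to e\) in \(G\) with \(\phi(h_{n})\neq e\).  Choose inductively a rapidly decreasing subsequence \(h_{n_{k}}\) and a sparse, rapidly increasing sequence of exponents \(m_{1}<m_{2}<\cdots\), and form the partial products
\[
  H_{N} = \prod_{k=1}^{N} g_{0}^{m_{k}}\, h_{n_{k}}\, g_{0}^{-m_{k}}.
\]
With appropriate rates, \(H_{N}\) converges in \(G\) to some \(H\) lying in any prescribed neighbourhood of the identity.  On the \(\Gamma\)-side, if the \(m_{k}\) grow fast enough, conjugation by \(\phi(g_{0})^{m_{k}}\) sandwiches \(\phi(h_{n_{k}})\) between two long arcs of the axis of \(\phi(g_{0})\); provided these arcs are pairwise disjoint, the successive factors of \(\phi(H_{N})\) do not cancel in the reduced form, so \(\wlen{\phi(H_{N})}\) grows linearly with \(N\).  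This contradicts the fact that \(\phi(H_{N})\) must eventually equal the fixed element \(\phi(H)\in\Gamma\).

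The hard part will be a uniform combinatorial non-cancellation lemma: one must show that by taking the exponents \(m_{k}\) sparse enough, cancellation in the reduced form is defeated for \emph{every} possible reduced form of \(\phi(h_{n_{k}})\), not merely for a generic one.  In Bass--Serre language this amounts to bounding the overlap between axes of arbitrary conjugates of \(\phi(g_{0})\) by a constant depending on \(g_{0}\) alone.  I expect the bulk of the technical work to reside there, together with the delicate interleaving of the shrinking rate of \(h_{n_{k}}\) in \(G\) against the blow-up rate of conjugation by \(g_{0}^{\pm m_{k}}\) in \(\Gamma\).
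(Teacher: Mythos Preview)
Your argument has a genuine gap at the point where you write ``this contradicts the fact that \(\phi(H_{N})\) must eventually equal the fixed element \(\phi(H)\in\Gamma\).''  That ``fact'' would follow from \(H_{N}\to H\) \emph{only if \(\phi\) were continuous}, which is precisely what is in dispute.  In Dudley's original argument, and in the paper's argument, one never uses convergence of \(\phi\)-images.  What is used instead is an \emph{algebraic} recursion among the limits: one constructs a whole family \(h_{m}=\lim_{n}\widetilde{h}_{n,m}\) and exploits the exact identity \(h_{m}=g_{m}\,h_{m+1}^{\,r_{m}}\) in \(G\), which survives under \(\phi\) because \(\phi\) is a homomorphism.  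Your single limit \(H=\lim_{N}H_{N}\) carries no such identity with the partial products \(H_{N}\); the only relation is \(H_{N+1}=H_{N}\cdot g_{0}^{m_{N+1}}h_{n_{N+1}}g_{0}^{-m_{N+1}}\), and passing to the limit there yields nothing useful.  Even if you set \(H^{(m)}=\lim_{N}\prod_{k\ge m}g_{0}^{m_{k}}h_{n_{k}}g_{0}^{-m_{k}}\) to obtain \(\phi(H^{(m)})=\gamma_{0}^{m_{m}}\eta_{m}\gamma_{0}^{-m_{m}}\phi(H^{(m+1)})\), this recursion lacks the amplifying powers \(r_{m}\) that drive Dudley's length estimate, and \(\phi(H^{(m+1)})\) can absorb the preceding block completely (for instance if \(\eta_{m}\) happens to be a power of \(\gamma_{0}\)).

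The paper's route is quite different from what you sketch.  It first reduces, via the closure \(\overline{K}\) of the kernel and Kurosh's theorem, to ruling out a \emph{surjective} homomorphism with \emph{dense} kernel onto a free product of two factors.  Density of the kernel then lets one prescribe \(\phi(g_{m})\) exactly (not merely approximately), and the Dudley recursion \(h_{m}=g_{m}h_{m+1}^{r_{m}}\) is run with \(\phi(g_{m})\) chosen outside the set \(S^{c}_{\Gamma}\cdot S^{c}_{\Gamma}\) (or equal to a fixed ``unbalanced'' word).  The combinatorial work is not an axis-overlap bound as you suggest, but a structural analysis of \(S^{c}_{\Gamma}\cdot S^{c}_{\Gamma}\) ensuring that \(\phi(h_{m})\) and \(\phi(h_{m+1})\) cannot both lie in \(S^{c}_{\Gamma}\), which is what makes the length estimate go through.
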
 

By taking \( \phi \) to be the identity map we obtain the following corollary (which, in fact, will be proved before
Theorem \ref{thm:continuity-into-free-products} and will serve as a step in its proof).

\begin{theorem}
  \label{thm:completely-metrizable-free-product-discrete}
  Any completely metrizable group topology on a non-trivial free product \( \freeprod \) is discrete.
\end{theorem}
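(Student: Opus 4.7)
The strategy adapts Dudley's infinite-product argument to free products, using the Bass--Serre tree of $\Gamma$ to handle elements conjugate into a factor. Assume, for contradiction, that $\tau$ is a non-discrete completely metrizable group topology on $\Gamma = \freeprod$, and fix a compatible complete metric $d$. Non-discreteness yields a sequence $(\gamma_n) \subseteq \Gamma \setminus \{e\}$ with $\gamma_n \to e$ in $\tau$; after passing to a subsequence we arrange $d(\gamma_n, e) < 2^{-n}$, so that every ordered infinite subproduct is $\tau$-convergent in $\Gamma$.

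The crucial step is to upgrade $(\gamma_n)$ to a sequence $(\delta_n)$, still converging to $e$ in $\tau$, whose elements are \emph{hyperbolic} on the Bass--Serre tree $T$ of $\Gamma$ (i.e., not conjugate into any factor), have cyclically-reduced word lengths tending to infinity, and whose axes in $T$ are in ``generic position''---so that the reduced form of every product $\delta_{n_1} \cdots \delta_{n_k}$ is the concatenation of the reduced forms of the factors, with no cancellation. Hyperbolicity is enforced because the product of two elliptic elements fixing distinct vertices of $T$ is hyperbolic, so an elliptic $\gamma_n$ may be replaced by $\gamma_n \gamma_m$ (still $\tau$-small). Large translation length is obtained by taking powers $\delta_n := \widetilde{\gamma}_n^{k_n}$, choosing $\widetilde{\gamma}_n$ close enough to $e$---using continuity of the $k_n$-th power map at $e$---that $d(\delta_n, e) < 2^{-n}$. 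Generic position of axes is arranged by conjugating each $\delta_n$ by a carefully chosen element (keeping $\tau$-smallness) and diagonalizing.

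With $(\delta_n)$ in hand, the infinite product $y := \prod_n \delta_n \in \Gamma$ is well-defined by $\tau$-completeness. Generic position forces $\wlen{y_N} = \sum_{n \leq N} \wlen{\delta_n} \to \infty$ for the partial products $y_N = \delta_1 \cdots \delta_N$, while the tails $y^{(N)} = \prod_{n > N} \delta_n \to e$ in $\tau$ satisfy $y = y_N y^{(N)}$; the triangle inequality $\wlen{y_N} \leq \wlen{y} + \wlen{y^{(N)}}$ then forces $\wlen{y^{(N)}} \to \infty$, producing a new $\tau$-null sequence with growing word length. Iterating the construction on $(y^{(N)})$---whose generic-position data propagates through the recursion---eventually produces an element of $\Gamma$ whose unique reduced form would have to be unboundedly long, a contradiction. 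The main technical obstacle is the second step: every multiplication, power, and conjugation used to enforce hyperbolicity, large translation length, and generic axes must be performed with enough $\tau$-smallness to preserve rapid convergence, a delicate simultaneous diagonal argument balancing the tree-theoretic data in $T$ against the topological data in $\tau$.
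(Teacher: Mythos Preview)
Your proposal has a genuine gap at the final step. You correctly argue that if $(\delta_n)$ is a $\tau$-null sequence in ``generic position'', then the partial products satisfy $\wlen{y_N} \to \infty$, and hence the tails $y^{(N)} = y_N^{-1} y$ (which tend to $e$ in $\tau$) satisfy $\wlen{y^{(N)}} \ge \wlen{y_N} - \wlen{y} \to \infty$. But this is \emph{not} a contradiction: a $\tau$-null sequence of unbounded word length is entirely consistent with $\tau$ being non-discrete---indeed, the paper shows (Lemma~\ref{lem:long-words-in-open-sets}) that it is an automatic consequence of non-discreteness. Your claim that ``iterating the construction \ldots\ eventually produces an element of $\Gamma$ whose unique reduced form would have to be unboundedly long'' is unsupported: each pass of the construction outputs one limit element $y \in \Gamma$, necessarily of finite word length, together with a new $\tau$-null sequence of growing word length; iteration just reproduces this situation. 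No mechanism in your argument forces a \emph{single fixed} element of $\Gamma$ to have arbitrarily large length.

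The paper avoids this trap precisely by retaining Dudley's \emph{nested-power} construction $h_m = g_m h_{m+1}^{r_m}$. The recursion lets one telescope
\[
\wlen{h_1} \ge \wlen{h_2^{r_1}} - \wlen{g_1} \ge \wlen{h_2} - \wlen{g_1} \ge \cdots \ge \wlen{h_{m+1}^{r_m}} - \sum_{l \le m}\wlen{g_l},
\]
and the entire setup is arranged so that $\wlen{h_{m+1}^{r_m}} \ge r_m$, making the right-hand side at least $m$ for every $m$---a contradiction about the \emph{fixed} element $h_1$. The real obstacle (and the reason Dudley's proof does not transfer verbatim) is that $\wlen{\gamma^n} \ge n$ can fail when $\gamma$ is conjugate into a factor; the paper handles this not via Bass--Serre theory but by showing that $S^{c}_{\Gamma}\cdot S^{c}_{\Gamma}$ has empty $\tau$-interior and then choosing each $g_m$ outside it, so that at least one of $h_m$, $h_{m+1}$ lies in $S_\Gamma$, together with a metric estimate keeping $h_m$ away from the torsion set $F_{r_{m-1}}$. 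Your tree-theoretic preliminaries aim at the same obstacle, but without the nested powers there is nothing on which to hang the lower bound.
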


Our proof of Theorems \ref{thm:continuity-into-free-products} and \ref{thm:completely-metrizable-free-product-discrete}
will take the original construction of Dudley as an important ingredient, so we find it useful to recall briefly the
argument from \cite{MR0136676} for completely metrizable groups.
\begin{proof}[Sketch of proof of Dudley's Theorem]
  Let \( \phi: G \to F \) be a homomorphism from a completely metrizable group \( G \) into a free group \( F \) and let
  \( ||\cdot || : F \to \mathbb{N} \) be the length function given by the length of the reduced form of an element.  The
  crucial property of this length function is the inequality \( ||\gamma^{n}|| \ge \max\big\{n, ||\gamma||\big\} \) for
  all non-trivial \( \gamma \in F \) and all \( n \ge 1 \).

  If the homomorphism \( \phi \) is discontinuous, then its kernel cannot be open, and we can find a sequence
  \( (g_{m})_{m=1}^{\infty} \) of elements in \( G \) such that \( \phi(g_{m}) \ne e \)  and for
  \[ \widetilde{h}_{n,m} = g_{m}(g_{m+1}( \cdots (g_{n-1}(g_{n})^{r_{n-1}})^{r_{n-2}} \cdots)^{r_{m+1}})^{r_{m}}, \]
  where \( r_{n} = n + \sum_{l=1}^{n} ||\phi(g_{l})|| \), the sequence \( \widetilde{h}_{n,m} \) converges for all \( m \).
  For the limit \( h_{m} = \lim_{n \to \infty} \widetilde{h}_{n,m} \) we have \( h_{m} = g_{m}h_{m+1}^{r_{m}} \).  We claim that
  \( ||\phi(h_{m})^{r_{m-1}}|| \ge r_{m-1} \).  Indeed, if \( \phi(h_{m+1}) \ne e \), then
  \[ ||\phi(h_{m})^{r_{m-1}}|| \ge ||\phi(h_{m})|| \ge ||\phi(h_{m+1})^{r_{m}}||-||\phi(g_{m})|| \ge r_{m-1} + 1. \]
  If on the other hand \( \phi(h_{m+1}) = e \), then \( \phi(h_{m})^{r_{m-1}} = \phi(g_{m})^{r_{m-1}} \) and
  \( ||\phi(g_{m})^{r_{m-1}}|| \ge r_{m-1} \), since \( \phi(g_{m}) \ne e \) by the choice of \( g_{m} \).  And so
  \begin{displaymath}
    \begin{aligned}
      ||\phi(h_{1})|| &= ||\phi(g_{1}) \phi(h_{2})^{r_{1}}|| \ge ||\phi(h_{2})^{r_{1}}|| - ||\phi(g_{1})|| \ge
      ||\phi(h_{2})|| - ||\phi(g_{1})||\\
      &= ||\phi(g_{2})\phi(h_{3})^{r_{2}}|| - ||\phi(g_{1})|| \ge ||\phi(h_{3})^{r_{2}}|| - ||\phi(g_{1})|| -
      ||\phi(g_{2})|| \ge ||\phi(h_{3})|| - ||\phi(g_{1})|| - ||\phi(g_{2})||\\[9pt]
      &=\quad \cdots \\
      &= ||\phi(g_{m})\phi(h_{m+1})^{r_{m}}|| - \sum_{l=1}^{m-1} ||\phi(g_{l})|| \ge ||\phi(h_{m+1})^{r_{m}}|| -
      \sum_{l=1}^{m}||\phi(g_{l})|| \ge r_{m} - \sum_{l=1}^{m}||\phi(g_{l})|| = m
    \end{aligned}
  \end{displaymath}
  is true for all \( m \), which is impossible.
\end{proof}

In the above argument we used the freeness of the target group only in the amount that it implies the existence of a
length function with the special property.  In general, a \emph{length function} on a group \( \Gamma \) is a function
\( ||\cdot|| : \Gamma \to \mathbb{N} \) that for all \( \gamma, \gamma_{1}, \gamma_{2} \in \Gamma \) satisfies:
\begin{enumerate}[(i)]
  \item \( ||e|| = 0 \) (where \( e \) is the identity element);
  \item \( ||\gamma^{-1}|| = ||\gamma|| \);
  \item \( ||\gamma_{1} \gamma_{2} || \le ||\gamma_{1}|| + ||\gamma_{2}||\).
\end{enumerate}
Suppose now that we have a homomorphism \( \phi : G \to \Gamma \) and a sequence
\( (g_{m})_{m=1}^{\infty} \) of elements in \( G \) such that the limit
\[ h_{m} = \lim_{n \to \infty} g_{m}(g_{m+1}(\cdots (g_{n-1}(g_{n})^{r_{n-1}})^{r_{n-2}} \cdots)^{r_{m+1}})^{r_{m}} \]
exists for every \( m \), where \( r_{n} \) is an increasing sequence of natural numbers (possibly different from the
sequence used in the argument above).  To obtain the inequalities \( ||h_{1}|| \ge M \) for all \( M \) with respect to
a given length function \( ||\cdot|| \) on \( \Gamma \) we need the following:
\begin{enumerate}[(i)]
\item \( ||\phi(h_{m})^{r_{m-1}}|| \ge r_{m-1} \) for all \( m \ge 2 \);
\item \( ||\phi(h_{m})^{r_{m-1}}|| \ge ||\phi(h_{m})|| \) for all \( m \ge 2 \);
\item numbers \( r_{m} - \sum_{l=1}^{m}||\phi(g_{l})|| \) are unbounded from above.
\end{enumerate}

In our setting the group \( \Gamma = \freeprod \) will be a free product of non-trivial groups and the length function
\( ||\cdot|| : \freeprod \to \mathbb{N} \) will be given by the length of the reduced form of an element.  For such a
length function the inequality \( ||\gamma^{n}|| \ge \max\big\{n, ||\gamma||\big\} \) is false for some non-trivial
\( \gamma \in \Gamma \), so we shall need to take special care to fulfil the above list of conditions.

In Section \ref{sec:length-function} we study the length function on \( \freeprod \) and the set of elements on which
the inequality \( ||\gamma^{n}|| \ge \max\big\{n, ||\gamma||\big\} \) fails.  In Section \ref{sec:free-prod-discrete} we
show that any completely metrizable group topology on a free product is discrete.  Section \ref{sec:more-calc-free} does
some further analysis of the behaviour of the length function and Section \ref{sec:autom-cont-homom} contains the proof
of the main result.

\medskip

The author wants to thank Christian Rosendal for helpful discussions of the automatic continuity and for noticing an
error in an earlier draft.  The author also thanks Steven Deprez for his \TeX\ advice and the anonymous referee for
a careful reading of the paper.

\section{Word length functions on free products}
\label{sec:length-function}

Throughout this section let \( \Gamma = \freeprod \) be a free product of non-trivial groups \( \Gamma_{a} \) with
\( A \) being an index set of size \( \card{A} \ge 2 \).  The identity element of any group is denoted by \( e \); we
hope that there will be no confusion to which of the groups we refer.  Let
\( X_{\Gamma} = \bigcup_{a \in A} \Gamma_{a} \) viewed as a subset of \( \Gamma \).  The set \( X_{\Gamma} \) depends
not only on the group \( \Gamma \), but also on the representation of \( \Gamma \) as a free product \( \freeprod \),
but we abuse notations and write just \( X_{\Gamma} \); the choice of \( \Gamma_{a} \) will be clear from the context.
We say that two elements \( x,y \in X_{\Gamma} \) are \emph{multipliable} if there is an \( a \in A \) such that
\( x,y \in \Gamma_{a} \); in this case there is a unique \( z \in X_{\Gamma} \) such that \( x \cdot y = z \).  Any
element \( \gamma \in \Gamma \) can be written uniquely as a reduced product \( \gamma = x_{1} x_{2} \cdots x_{n} \),
\( x_{i} \in X_{\Gamma} \), where reduced means \( x_{i} \) and \( x_{i+1} \) are not multipliable.  The word length
function on \( \Gamma \) is then defined by \( \wlen{\gamma} = n \).  We also make an agreement that \( e \in \Gamma \)
is represented by the empty word and \( \wlen{e} = 0 \).  The product
\( \gamma = \gamma_{1} \gamma_{2} \cdots \gamma_{n} \) for \( \gamma_{i} \in \Gamma \) is said to be reduced if
\( \wlen{\gamma} = \sum_{i=1}^{n}\wlen{\gamma_{i}} \).  An element \( \gamma \in \Gamma \) is called \emph{cyclically
  reduced} if in its reduced form \( \gamma = x_{1} \cdots x_{n} \) either \( n \le 1 \) or \( x_{1} \ne x_{n}^{-1} \).

\begin{lemma}
  \label{lem:core-of-a-word}
  Any \( \gamma \in \Gamma \) can be uniquely written as a reduced product of the form
  \( \gamma = \alpha \gamma_{c} \alpha ^{-1} \), \( \alpha, \gamma_{c} \in \Gamma \), where \( \gamma_{c} \) is
  cyclically reduced.
\end{lemma}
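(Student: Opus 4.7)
The plan is to prove existence by induction on the word length and uniqueness by observing that the reduced form of $\gamma$ uniquely dictates any such decomposition.

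For existence, I would induct on $n = \wlen{\gamma}$. If $n \le 1$, then $\gamma$ is already cyclically reduced and one can take $\alpha = e$, $\gamma_c = \gamma$. For the inductive step write the reduced form $\gamma = x_1 x_2 \cdots x_n$. If $x_1 \ne x_n^{-1}$, then again $\gamma$ is cyclically reduced and $\alpha = e$ works. Otherwise $x_1 = x_n^{-1}$ and $\gamma = x_1 \gamma' x_1^{-1}$ where $\gamma' = x_2 \cdots x_{n-1}$ has reduced form of length $n - 2$. By the induction hypothesis, $\gamma' = \beta \gamma_c \beta^{-1}$ as a reduced product with $\gamma_c$ cyclically reduced. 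Setting $\alpha = x_1 \beta$, we get $\gamma = \alpha \gamma_c \alpha^{-1}$; the fact that this is still a reduced product follows because $\beta$'s reduced form is a prefix $x_2 \cdots x_k$ of $\gamma'$'s reduced form, and $x_1$ is not multipliable with $x_2$ (this was part of the reducedness of $\gamma$).

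For uniqueness, suppose $\gamma = \alpha_1 c_1 \alpha_1^{-1} = \alpha_2 c_2 \alpha_2^{-1}$ are two reduced decompositions with $c_i$ cyclically reduced, and set $k_i = \wlen{\alpha_i}$. Since each decomposition is a reduced product, by uniqueness of reduced form the reduced form of $\gamma$ must coincide with the concatenation of the reduced forms of $\alpha_i$, $c_i$, and $\alpha_i^{-1}$. Comparing with $\gamma = x_1 \cdots x_n$ forces $\alpha_i = x_1 \cdots x_{k_i}$ and $c_i = x_{k_i + 1} \cdots x_{n - k_i}$, and in particular $x_j = x_{n-j+1}^{-1}$ for all $j \le k_i$. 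The cyclic reducedness of $c_i$ then says that either $n - 2k_i \le 1$ or $x_{k_i + 1} \ne x_{n - k_i}^{-1}$. Thus $k_i$ equals the smallest $k \ge 0$ for which either $n - 2k \le 1$ or $x_{k+1} \ne x_{n-k}^{-1}$, and so $k_1 = k_2$, forcing $\alpha_1 = \alpha_2$ and $c_1 = c_2$.

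The argument is essentially routine once one sets up the right induction, so I do not anticipate a major obstacle; the only point that needs real care is verifying that the product $x_1 \beta \cdot \gamma_c \cdot \beta^{-1} x_1^{-1}$ constructed in the inductive step is still reduced, which is why the proof is phrased in terms of the letter-by-letter reduced form $x_1 \cdots x_n$ rather than abstractly manipulating $\alpha$, $\gamma_c$ and $\alpha^{-1}$.
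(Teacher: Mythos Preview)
Your proof is correct and follows essentially the same idea as the paper's: both pick out the maximal $k$ for which $x_j = x_{n-j+1}^{-1}$ holds for all $j \le k$ and set $\alpha = x_1 \cdots x_k$, $\gamma_c = x_{k+1} \cdots x_{n-k}$. The paper does this directly rather than by induction, and it omits the explicit uniqueness verification that you supply.
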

\begin{proof}
  Let \( \gamma \in \Gamma \) be a non-trivial element with the reduced form \( y_{1} \ldots y_{N} \),
  \( y_{i} \in X_{\Gamma} \).  Let \( k \le N/2 \) be maximal such that \( y_{k} = y_{N-k+1}^{-1} \) and set
  \( \alpha = y_{1} \cdots y_{k} \), \( \gamma_{c} = y_{k+1} \cdots y_{N-k} \).  Then
  \( \gamma = \alpha \gamma_{c} \alpha^{-1} \) is reduced and \( y_{k+1} y_{N-k} \ne e \) by the choice of \( k \)
  unless \( N-k = k+1 \), in which case \( k = (N-1)/2 \) and \( \wlen{\gamma_{c}} = 1 \).
\end{proof}

We introduce the following sets to control the behaviour of the length function on \( \Gamma \):
\begin{displaymath}
  \begin{aligned}
    S_{\Gamma} &= \{ \gamma \in \Gamma : \wlen{\gamma^{n}} \ge n\ \textrm{for all \( n \ge 1 \)}\},  \\
    \widetilde{F}_{\Gamma} &= \{ \gamma \in \Gamma : \wlen{\gamma^{n}} \ge \wlen{\gamma}\ \textrm{for all \( n \ge 1
      \)}\}, \\
    F_{\Gamma} &= \{ \gamma \in \Gamma: \gamma^{n} = e\ \textrm{for some \( n \ge 1 \)} \}.
  \end{aligned}
\end{displaymath}
Complements of \( F_{\Gamma} \) and \( S_{\Gamma} \) in \( \Gamma \) will be denoted by \( F^{c}_{\Gamma} \) and \(
S^{c}_{\Gamma} \) respectively.
\begin{lemma}
  \label{lem:description-of-S-and-F}
  For any group \( \Gamma = \freeprod\)
  \begin{enumerate}[(i)]
  \item\label{item:description-S}
    \( S^{c}_{\Gamma} = \{ \alpha x \alpha^{-1} : \alpha \in \Gamma,\, x \in X_{\Gamma} \} \), and in particular
    \( \gamma \in S^{c}_{\Gamma} \) if and only if \( \gamma^{n} \in S^{c}_{\Gamma} \) for all \( n \).
  \item\label{item:when-power-is-not-shorter} For any \( n \ge 2 \) and any \( \gamma \in \Gamma \) the inequality
    \( \wlen{\gamma^{n}} < \wlen{\gamma} \) holds if and only if \( \gamma = \alpha x \alpha^{-1} \),
    \( x \in X_{\Gamma} \), with \( x^{n} = e \) and \( x \ne e \).  In other words
    \( \wlen{\gamma^{n}} \ge \wlen{\gamma} \) unless \( \gamma^{n} = e \).
  \item\label{item:description-F}
    \( \widetilde{F}_{\Gamma}^{c} = \{ \alpha x \alpha^{-1} : \alpha \in \Gamma,\, x \in X_{\Gamma},\, x^{n} = e\
    \textrm{for some \( n \ge 1 \)},\, x \ne e \} = F_{\Gamma}\setminus\{e\}\).
  \end{enumerate}
\end{lemma}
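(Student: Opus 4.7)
The plan is to reduce each element to its cyclic core via Lemma \ref{lem:core-of-a-word} and then analyze how raising to powers interacts with the length function. Write $\gamma = \alpha \gamma_{c} \alpha^{-1}$ with $\gamma_{c}$ cyclically reduced, so that $\gamma^{n} = \alpha \gamma_{c}^{n} \alpha^{-1}$. My first step is to verify that whenever $\gamma_{c}^{n} \ne e$ this decomposition is still reduced, and hence
\[ \wlen{\gamma^{n}} = 2\wlen{\alpha} + \wlen{\gamma_{c}^{n}}. \]
The point is that the leftmost and rightmost letters of $\gamma_{c}^{n}$ coincide with those of $\gamma_{c}$, since all possible cancellations in the product $\gamma_{c}^{n} = (x_{1}\cdots x_{k})^{n}$ sit at the $n-1$ internal junctions $x_{k}x_{1}$; hence non-multipliability at the boundary with $\alpha$ and $\alpha^{-1}$ is inherited from the reducedness of $\alpha \gamma_{c} \alpha^{-1}$.

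The second step is a lower bound on $\wlen{\gamma_{c}^{n}}$ in terms of $k = \wlen{\gamma_{c}}$. If $k \ge 2$, each of the $n-1$ internal junctions either leaves $x_{k}x_{1}$ intact (when these are not multipliable) or merges them into a single non-identity letter (since $x_{1} \ne x_{k}^{-1}$), and no further cascade occurs because $x_{k-1}$ and $x_{2}$ lie in factors distinct from the merged letter. This yields
\[ \wlen{\gamma_{c}^{n}} \ge nk - (n-1) = n(k-1)+1 \ge \max\{n, k\}, \]
so $\wlen{\gamma_{c}^{n}} \ge \wlen{\gamma_{c}}$ for every $n \ge 1$ whenever $k \ge 2$. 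If instead $k \le 1$, then $\gamma_{c} = x \in X_{\Gamma}$ and $\gamma_{c}^{n} = x^{n}$ lies in $X_{\Gamma}$, so it has length at most one.

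Once these two facts are in hand, each clause drops out. For (i), if $k \ge 2$ then $\wlen{\gamma^{n}} \ge 2\wlen{\alpha}+n \ge n$ and $\gamma \in S_{\Gamma}$, while if $k \le 1$ then $\gamma = \alpha x \alpha^{-1}$ with $x \in X_{\Gamma}$, $\wlen{\gamma^{n}} \le 2\wlen{\alpha}+1$ eventually falls below $n$, and $\gamma \in S_{\Gamma}^{c}$; the stability $\gamma \in S_{\Gamma}^{c} \Leftrightarrow \gamma^{n} \in S_{\Gamma}^{c}$ is then immediate from $\gamma^{n} = \alpha x^{n} \alpha^{-1}$ with $x^{n} \in X_{\Gamma}$. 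For (ii), the inequality $\wlen{\gamma^{n}} < \wlen{\gamma}$ can hold only when $\gamma_{c}^{n} = e$ (otherwise the bound gives $\wlen{\gamma_{c}^{n}} \ge \wlen{\gamma_{c}}$), which forces $k = 1$ and $x^{n} = e$, $x \ne e$; the converse is immediate from $\gamma^{n} = e$. Item (iii) repackages (ii), and the identification with $F_{\Gamma}\setminus\{e\}$ is the observation that a non-trivial $\gamma$ is torsion if and only if its cyclic core is torsion, which by the bound forces the core to lie in $X_{\Gamma}$.

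The main obstacle is the length estimate in the second step: the paper's definition of cyclically reduced only forbids $x_{1} = x_{k}^{-1}$, so $x_{1}$ and $x_{k}$ may live in the same factor and some cancellation at the internal joins is unavoidable. One must carefully verify that each junction costs at most one letter and that the resulting merge cannot trigger a further cascade. Everything else in the proof is routine accounting.
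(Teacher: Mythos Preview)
Your argument is correct and follows the same route as the paper: reduce to the cyclic core via Lemma~\ref{lem:core-of-a-word}, use the estimate \(\wlen{\gamma_{c}^{n}} \ge n\wlen{\gamma_{c}} - (n-1)\) when \(\wlen{\gamma_{c}}\ge 2\), and read off each item. You are simply more explicit than the paper about why \(\alpha\gamma_{c}^{n}\alpha^{-1}\) stays reduced and why each internal junction costs at most one letter; the paper compresses these into the single displayed inequality without comment. One tiny wording issue: for \(k=1\) the first and last letters of \(\gamma_{c}^{n}\) are \(x^{n}\), not literally \(x\), but since \(x^{n}\) lies in the same factor the boundary non-multipliability with \(\alpha\) is still inherited, so your conclusion stands.
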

\begin{proof}
  \eqref{item:description-S} If \( \gamma = \alpha x \alpha^{-1} \) for some \( x \in X_{\Gamma} \), then
  \( \wlen{\gamma^{n}} = \wlen{\alpha x^{n} \alpha^{-1}} \le \wlen{\alpha x \alpha^{-1}} = \wlen{\gamma} \) and
  therefore \( \gamma \not \in S_{\Gamma} \).  If \( \gamma = \alpha \gamma_{c} \alpha^{-1} \) is reduced and
  \( \wlen{\gamma_{c}} \ge 2 \), then
  \[ \wlen{\gamma^{n}} = \wlen{\alpha \gamma_{c}^{n} \alpha^{-1}} \ge 2\wlen{\alpha} + n\wlen{\gamma_{c}} - (n-1) >
  n, \] and \( \gamma \in S_{\Gamma} \).

  \eqref{item:when-power-is-not-shorter} If \( \gamma = \alpha x \alpha^{-1} \) with \( x \in X_{\Gamma} \),
  \( x \ne e \) and \( x^{n} = e \), then \( \gamma^{n} = e \) and in particular \( \wlen{\gamma^{n}} < \wlen{\gamma}
  \).  If \( x^{n} \ne e \), then \( \wlen{\gamma^{n}} = \wlen{\gamma} \).  Finally, if
  \( \gamma = \alpha \gamma_{c} \alpha^{-1} \) and \( \wlen{\gamma_{c}} \ge 2 \), then for any \( n \ge 2 \) we have
  \[ \wlen{\gamma^{n}} \ge 2\wlen{\alpha} + n\wlen{\gamma_{c}} - (n-1) = 2\wlen{\alpha} + n(\wlen{\gamma_{c}}-1) + 1 \ge
  2\wlen{\alpha} + 2\wlen{\gamma_{c}} -1 \ge \wlen{\gamma}. \]

  \eqref{item:description-F} The equality
  \( F_{\Gamma} = \{ \alpha x \alpha^{-1} : \alpha \in \Gamma,\ x \in X_{\Gamma},\ x^{n} = e\ \textrm{for some
    \( n \ge 1 \)} \} \) is classical, so \( \widetilde{F}_{\Gamma}^{c} = F_{\Gamma} \setminus\{e\} \) follows from item
  \eqref{item:when-power-is-not-shorter}.
\end{proof}

We now come to a rather technical lemma that describes elements that can be written as a product of two elements from
\( S^{c}_{\Gamma} \).  Corollaries of this lemma will serve as supplements to our modifications of Dudley's argument.
\begin{lemma}
  \label{lem:description-of-elements-in-S-squared}
  Let \( \gamma \in S_{\Gamma}^{c} \cdot S_{\Gamma}^{c} \) be a non-trivial element.  There are
  \( \mu, \nu, \eta \in \Gamma \) such that the reduced form of \( \gamma \) is one of the following:
  \begin{enumerate}[(i)]
  \item\label{item:trivial} \( \eta z \eta ^{-1} \),\quad for some \( z \in X_{\Gamma}\setminus \{e\} \);
  \item\label{item:simple} \( \eta \mu z_{0} \mu^{-1} \nu z_{1} \nu^{-1} \eta^{-1} \),\quad for some
    \( z_{0}, z_{1} \in X_{\Gamma}\setminus \{e\} \);
  \item\label{item:mixed} \( \eta \delta_{1} \nu z \nu^{-1} \delta_{2} \eta^{-1} \),\quad for some
    \( \delta_{1}, \delta_{2}, z \in X_{\Gamma}\setminus \{e\} \), \( \delta_{1} \) and \( \delta_{2} \) are
    multipliable and \( \delta_{1} \cdot \delta_{2} \ne e\);
  \item\label{item:full}
    \( \eta \delta_{1} \mu z_{0} \mu^{-1} \delta_{2} \nu z_{1} \nu^{-1} \delta_{3} \eta^{-1} \),\quad for
    \( \delta_{i}, z_{j} \in X_{\Gamma}\setminus \{e\} \), \( \delta_{1},\delta_{2},\delta_{3} \) are multipliable and
    \( \delta_{2} = \delta_{1}^{-1} \cdot \delta_{3}^{-1} \).
  \end{enumerate}
\end{lemma}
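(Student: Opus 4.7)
By Lemma~\ref{lem:description-of-S-and-F}\eqref{item:description-S} one may write $\gamma = g_1 g_2$ with $g_i = \alpha_i x_i \alpha_i^{-1}$ for some $\alpha_i \in \Gamma$ and $x_i \in X_{\Gamma}$.  Applying Lemma~\ref{lem:core-of-a-word} to each $g_i$ (absorbing the boundary letters of $\alpha_i$ that are multipliable with $x_i$ into the core), we may arrange each product $\alpha_i x_i \alpha_i^{-1}$ to be reduced, so that the last letter of $\alpha_i$ lies in a different factor from $x_i$.  The degenerate case $x_1 = e$ or $x_2 = e$ places $\gamma$ in $S_{\Gamma}^{c}$, whose core decomposition via Lemma~\ref{lem:core-of-a-word} directly yields form~(i); hence we may assume $x_1, x_2 \ne e$.

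Let $\eta \in \Gamma$ be the longest element for which $\alpha_1 = \eta \sigma$ and $\alpha_2 = \eta \tau$ are reduced products.  By maximality one of the following holds: $\sigma = e$, $\tau = e$, or the first letters $u$ of $\sigma$ and $v$ of $\tau$ are distinct elements of $X_{\Gamma}$.  Factoring out $\eta$ gives
\[ \gamma \;=\; \eta \,\bigl(\sigma\, x_1 \,\sigma^{-1}\, \tau\, x_2 \,\tau^{-1}\bigr)\, \eta^{-1}, \]
and the reducedness of each $g_i$ forbids cancellation at the junctions $\sigma x_1$ and $x_2 \tau^{-1}$ whenever $\sigma, \tau$ are nonempty.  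The task reduces to classifying the reduced form of the middle bracketed expression by chasing the remaining possible cancellations at $x_1 \sigma^{-1}$, at $\sigma^{-1}\tau$, and at $\tau x_2$.

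I would then split into four broad cases.  \emph{(I)} $\sigma = \tau = e$: the middle is $x_1 x_2$, producing form~(i) if $x_1, x_2$ are multipliable to a non-trivial element and form~(ii) with $\mu = \nu = e$ otherwise (the possibility $x_1 x_2 = e$ is excluded since $\gamma \ne e$).  \emph{(II)} Exactly one of $\sigma, \tau$ is empty, say $\sigma = e$: only the $x_1 \tau$ junction can collapse, and tracking that collapse yields form~(ii) (no collapse), form~(iii) (a single-letter combination leaves residual letters $\delta_1, \delta_2$ in a common factor bracketing the remaining bump), or form~(i) (total cancellation leaving a single core letter).  \emph{(III)} Both $\sigma, \tau$ nonempty with $u, v$ in different factors: then $\sigma^{-1}\tau$ is already reduced and the middle is form~(ii) as written.  \emph{(IV)} Both $\sigma, \tau$ nonempty with $u, v$ in the same factor: write $\sigma = u\mu$ and $\tau = v\nu$, so that $u^{-1}v \in X_{\Gamma} \setminus \{e\}$; setting $\delta_1 := u$, $\delta_2 := u^{-1}v$, $\delta_3 := v^{-1}$ exhibits form~(iv) with $\delta_1 \delta_2 \delta_3 = e$, collapsing further to form~(iii) if $\mu$ or $\nu$ is empty.

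\textbf{Main obstacle.}  The argument is elementary, but the bookkeeping is unpleasant: at every step one must check that the asserted reduced form really is reduced and that the residual letters $\delta_i, z_j$ are genuinely in $X_{\Gamma}\setminus\{e\}$, using nothing beyond reducedness of $g_1, g_2$ and maximality of $\eta$.  Form~(iv) is the subtlest case, since its defining constraint $\delta_2 = \delta_1^{-1}\delta_3^{-1}$ is the algebraic fingerprint of the identity $u\cdot (u^{-1}v)\cdot v^{-1} = e$ arising at the $\sigma^{-1}\tau$ junction, and it is precisely what prevents $\delta_1 \delta_2 \delta_3$ from collapsing further into a single letter and thereby falling into a simpler form.
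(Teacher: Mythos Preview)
Your common-prefix factorization is the right idea and matches the paper's approach through the easy cases, but Case~(II) hides the one genuinely hard phenomenon and your treatment of it is incorrect.  Take $\sigma = e$, so the middle expression is $x_{1}\tau x_{2}\tau^{-1}$ with $\tau = t_{1}\cdots t_{n}$.  You assert that only the junction $x_{1}\cdot t_{1}$ can collapse, and that a full collapse $x_{1} = t_{1}^{-1}$ leaves ``a single core letter'' and hence form~(i).  Neither claim is true.  After $x_{1}t_{1} = e$ the middle becomes $t_{2}\cdots t_{n}x_{2}t_{n}^{-1}\cdots t_{1}^{-1}$, and now the \emph{left outer junction} $\eta\cdot t_{2}$ may fail to be reduced: you only know that the last letter of $\eta$ lies in a different factor from $t_{1}$, which says nothing about $t_{2}$.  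If that letter of $\eta$ in turn cancels $t_{2}$, the process cascades: letters of $\eta$ successively cancel against the remaining $t_{j}$'s, and the right tail $t_{n}^{-1}\cdots t_{1}^{-1}$ (which is \emph{not} touched by this) is no longer a mirror of the left side.  The outcome is typically form~(iv), not form~(i).

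For a concrete two-factor example, take $g_{1} = a\,b^{-1}a^{-1}$ and $g_{2} = aba'\,b'\,a'^{-1}b^{-1}a^{-1}$ with $a,a'\in\Gamma_{1}\setminus\{e\}$, $a\ne a'$, and $b,b'\in\Gamma_{2}\setminus\{e\}$, $b\ne b'$.  Here $\eta = a$, $\sigma = e$, $\tau = ba'$, and $x_{1} = b^{-1} = t_{1}^{-1}$.  The reduced form of $\gamma = g_{1}g_{2}$ is
\[
(aa')\,b'\,a'^{-1}\,b^{-1}\,a^{-1},
\]
which is form~(iv) with $\eta=\mu=\nu=e$, $\delta_{1}=aa'$, $\delta_{2}=a'^{-1}$, $\delta_{3}=a^{-1}$ (so indeed $\delta_{2}=\delta_{1}^{-1}\delta_{3}^{-1}$), and certainly not form~(i).

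This cascade is precisely what the paper's proof spends most of its effort on: once $z_{0}=y_{m+1}^{-1}$ it introduces an auxiliary sequence $s_{k}=s_{0}-k(n-m)$ to track how far the block $y_{s_{k}+1}\cdots y_{s_{k-1}-1}z_{\epsilon_{k}}$ eats into the prefix $y_{1}\cdots y_{s_{k}-1}$, and only after this iteration terminates can one read off which of forms~(ii), (iii), (iv) results.  Your plan needs an analogous mechanism; without it Case~(II) is a genuine gap.  (As a minor aside, in Case~(IV) the subcase $\mu=e$ or $\nu=e$ is still form~(iv) with an empty $\mu$ or $\nu$, not form~(iii).)
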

\noindent Remember that we treat identity as an empty word, so any of the \( \mu, \nu, \eta \) are
allowed to be absent.
\begin{proof}
  Fix \( \gamma \in S_{\Gamma}^{c} \cdot S_{\Gamma}^{c} \), \( \gamma \ne e \),
  \( \gamma = \alpha z_{0} \alpha^{-1} \cdot \beta z_{1} \beta^{-1} \), where \( z_{0}, z_{1} \in X_{\Gamma} \) and both
  \( \alpha z_{0} \alpha^{-1} \) and \( \beta z_{1} \beta^{-1} \) are reduced.  If either \( z_{0} = e \) or
  \( z_{1} = e \), then the reduced form of \( \gamma \) is described by item \eqref{item:trivial} and we shall assume
  that \( z_{0} \) and \( z_{1} \) are non-trivial.  Let \( \alpha = x_{1} \cdots x_{m} \),
  \( \beta = y_{1} \cdots y_{n} \), \( x_{i}, y_{j} \in X_{\Gamma} \) be the reduced forms.  By taking \( \gamma^{-1} \)
  instead of \( \gamma \) we may and shall assume that \( m \le n \).  We are going to show that no matter how many
  reductions take place in the product \( \alpha z_{0} \alpha^{-1} \cdot \beta z_{1} \beta^{-1} \), the element's
  \( \gamma \) reduced representation has form given by one of the items above.  To make it easier to recognise that a
  given product can indeed be written in a certain form we shall underline elements \( \eta \), \( \mu \), \( \nu \) and
  their inverses; elements \( z_{j} \) and \( \delta_{i} \) are then the letters in between.
  
  If \( \gamma = \alpha z_{0} \alpha^{-1} \beta z_{1} \beta^{-1} \) is reduced, then, of course, \( \gamma \) has the
  form given by item \eqref{item:simple}.  So we assume that this product is not reduced.  If \( m = 0 \) and
  \( n = 0 \), then \( \gamma \) is a single letter, which fits in \eqref{item:trivial}.  If \( m = 0 \), but
  \( n \ge 1 \), then, depending on whether \( z_{0} \cdot y_{1} = e \), the reduced form of \( \gamma \) is one of the
  two:
  \begin{displaymath}
    \begin{aligned}
     \gamma &= \daggerform{\dmu{y_{2} \cdots y_{n}} z_{1} \dmui{y_{n}^{-1} \cdots y_{2}^{-1}} z_{0}},\\
      \gamma &= \daggerform{u \dnu{y_{2} \cdots y_{n}} z_{1} \dnui{y_{n}^{-1} \cdots y_{2}^{-1}} y_{1}^{-1}},\quad u =
      z_{0} \cdot y_{1},
    \end{aligned}
  \end{displaymath}
  which satisfy items \eqref{item:simple} and \eqref{item:mixed} respectively.

  So we assume \( m \ge 1 \), \( n \ge 1 \), and \( x_{1} \), \( y_{1} \) are
  multipliable.  If \( x_{1}^{-1} \cdot y_{1} \ne e \), then the product
  \[ \gamma = \daggerform{x_{1} \dmu{x_{2} \cdots x_{m}} z_{0} \dmui{x_{m}^{-1} \cdots x_{2}^{-1}} u \dnu{y_{2} \cdots
      y_{n}} z_{1} \dnui{y_{n}^{-1} \cdots y_{2}^{-1}} y_{1}^{-1}},\quad u = x_{1}^{-1} \cdot y_{1}, \]
  is reduced and \( \gamma \) satisfies \eqref{item:full}.  If \( x_{1} = y_{1} \), let \( k \le m\) be maximal such that
  \( x_{i} = y_{i} \) for \( i \le k \).  If \( k < m\), then, depending on whether \( x_{k+1} \) and
  \( y_{k+1} \) are multipliable or not, one of the two products
  \begin{displaymath}
    \begin{aligned}
      \gamma &= \daggerform{\deta{y_{1} \cdots y_{k}} \dmu{x_{k+1} \cdots x_{m}} z_{0} \dmui{x_{m}^{-1} \cdots
          x_{k+1}^{-1}} \dnu{y_{k+1} \cdots
          y_{n}} z_{1} \dnui{y_{n}^{-1} \cdots y_{k+1}^{-1}} \detai{y_{k}^{-1} \cdots y_{1}^{-1}}}, \\
      \gamma &= \daggerform{\deta{y_{1} \cdots y_{k}} x_{k+1} \dmu{x_{k+2} \cdots x_{m}} z_{0} \dmui{x_{m}^{-1} \cdots
          x_{k+2}^{-1}} u \dnu{y_{k+2} \cdots y_{n}} z_{1} \dnui{y_{n}^{-1} \cdots y_{k+2}^{-1}} y_{k+1}^{-1}
        \detai{y_{k}^{-1} \cdots y_{1}^{-1}}},\quad u = x_{k+1}^{-1} \cdot y_{k+1},
    \end{aligned}
  \end{displaymath}
  is reduced and they are of the form \eqref{item:simple} and \eqref{item:full} respectively. 
  
  We therefore consider the case \( k = m \).  If \( m = n \), then depending on whether \( z_{0} \) and \( z_{1} \) are
  multipliable the reduced form of \( \gamma \) is one of the following two
  \begin{displaymath}
    \begin{aligned}
      \gamma &= \daggerform{ \deta{y_{1} \cdots y_{m}} z_{0} z_{1} \detai{y_{m}^{-1} \cdots y_{1}^{-1}} }, \\
      \gamma &= \daggerform{ \deta{y_{1} \cdots y_{m}} u \detai{y_{m}^{-1} \cdots y_{1}^{-1}} },
      \quad u = z_{0} \cdot z_{1} \ne e, \\
    \end{aligned}
  \end{displaymath}
  and they satisfy \eqref{item:simple} and \eqref{item:trivial} respectively (here we use \( \gamma \ne e \) to
  exclude \( z_{0} \cdot z_{1} = e \)).

  We may now assume that \( m < n \) and \( \gamma \) can be written (in a not necessarily reduced way) as
  \[ \gamma = \daggerform{\deta{y_{1} \cdots y_{m}} z_{0} \cdot \dnu{y_{m+1} \cdots y_{n}} z_{1} \dnui{y_{n}^{-1} \cdots
      y_{m+1}^{-1}} \detai{y_{m}^{-1} \cdots y_{1}^{-1}}}, \]
  where \( \cdot \) represents the only position where reductions can take place.  If \( z_{0} \) is not multipliable
  with \( y_{m+1} \), then the above representation is reduced and satisfies \eqref{item:simple}.  If it is not reduced,
  but \( z_{0} \cdot y_{m+1} \ne e \), then the reduced form of \( \gamma \) is
  \[ \gamma = \daggerform{\deta{y_{1} \cdots y_{m}} u \dnu{y_{m+2} \cdots y_{n}} z_{1} \dnui{y_{n}^{-1} \cdots
      y_{m+2}^{-1}} y_{m+1}^{-1} \detai{y_{m}^{-1} \cdots y_{1}^{-1}}},\quad u = z_{0} \cdot y_{m+1},  \]
  in accordance with \eqref{item:mixed}.
  If \( z_{0} = y_{m+1}^{-1} \), then setting \( s_{0} = n+1 \), \( s_{1} = s_{0} - (n-m) = m+1 \) we have
  \[ \gamma = y_{1} \cdots y_{s_{1}-1} \cdot y_{s_{1}+1} \cdots y_{s_{0}-1} z_{1} y_{s_{0}-1}^{-1} \cdots
  y_{s_{1}+1}^{-1} z_{0} y_{s_{1}-1}^{-1} \cdots y_{1}^{-1}.  \]
  If the block \( y_{s_{1}+1} \cdots y_{s_{0}-1} z_{1} \) cancels the block \( y_{s_{2}} \cdots y_{s_{1}-1} \), where
  \( s_{2} = s_{1} - (n-m) = s_{0} - 2(n-m) \), then
  \[ \gamma = y_{1} \cdots y_{s_{2}-1} \cdot y_{s_{2}+1} \cdots y_{s_{1}-1} z_{0} y_{s_{1}-1}^{-1} \cdots
  y_{s_{2}+1}^{-1} z_{1} y_{s_{2}-1}^{-1} \cdots y_{1}^{-1}.  \]
  If again the block \( y_{s_{2}+1} \cdots y_{s_{1}-1} z_{0} \) cancels the block \( y_{s_{3}} \cdots y_{s_{2}-1} \) we
  can proceed in the same way and build a sequence \( s_{k} = s_{k-1} - (n-m) = s_{0} - k(n-m) \) such that
  \[ \gamma = y_{1} \cdots y_{s_{k}-1} \cdot y_{s_{k}+1} \cdots y_{s_{k-1}-1} z_{\epsilon_{k}} y_{s_{k-1}-1}^{-1} \cdots
  y_{s_{k}+1}^{-1} z_{1-\epsilon_{k}} y_{s_{k}-1}^{-1} \cdots y_{1}^{-1},\quad \epsilon_{k} = k \mod 2.  \]
  Note that we use \( m < n \) to ensure that \( s_{k} < s_{k-1} \), in which case there is the largest \( k \) for
  which this process works (it is possible that \( k = 1 \)).  For the largest possible \( k \) the block
  \( y_{s_{k}+1} \cdots y_{s_{1}-1} z_{\epsilon_{k}} \) does not cancel completely and so there are
  \( 0 \le p \le s_{k}-1 \) and \( s_{k}+1 \le q \le s_{k-1} \) such that
  \[ \gamma = y_{1} \cdots y_{p} \cdot y_{q} \cdots y_{s_{k-1}-1} z_{\epsilon_{k}} y_{s_{k-1}-1}^{-1} \cdots y_{q}^{-1}
  y_{p+1} \cdots y_{s_{k}-1} z_{1-\epsilon_{k}} y_{s_{k}-1}^{-1} \cdots y_{1}^{-1}, \]
  and this product is ``almost reduced'': it is either reduced or there is exactly one reduction.  If \( q < s_{k-1} \)
  and \( y_{p} \), \( y_{q} \) are not multipliable (including the case \( p=0 \)), then the reduced form of
  \( \gamma \) is
  \[ \gamma = \daggerform{\deta{y_{1} \cdots y_{p}}\dmu{y_{q} \cdots y_{s_{k-1}-1}} z_{\epsilon_{k}}
    \dmui{y_{s_{k-1}-1}^{-1} \cdots y_{q}^{-1}} \dnu{y_{p+1} \cdots y_{s_{k}-1}} z_{1-\epsilon_{k}}
    \dnui{y_{s_{k}-1}^{-1} \cdots y_{p+1}^{-1}} \detai{y_{p}^{-1} \cdots y_{1}^{-1}}}, \]
  and it has the form of \eqref{item:simple}.  If \( q < s_{k-1} \), \( p > 0 \), \( y_{p} \) and \( y_{q} \) are
  multipliable and \( y_{p} \cdot y_{q} \ne e \), then the reduced form of \( \gamma \) is
  \[ \gamma = \daggerform{\deta{y_{1} \cdots y_{p-1}} u \dmu{y_{q+1} \cdots y_{s_{k-1}-1}} z_{\epsilon_{k}}
    \dmui{y_{s_{k-1}-1}^{-1} \cdots y_{q+1}^{-1}} y_{q}^{-1} \dnu{y_{p+1} \cdots y_{s_{k}-1}} z_{1-\epsilon_{k}}
    \dnui{y_{s_{k}-1}^{-1} \cdots y_{p+1}^{-1}} y_{p}^{-1} \detai{y_{p-1}^{-1} \cdots y_{1}^{-1}}},\]
  where \( u = y_{p} \cdot y_{q} \) and \( \gamma \) satisfies item \eqref{item:full}, because
  \( u^{-1}(y_{p}^{-1})^{-1} = y_{q}^{-1}\).  If \( q = s_{k-1} \), then the ``almost reduced'' form of
  \( \gamma \) is
  \[ \gamma = \daggerform{\deta{y_{1} \cdots y_{p}} \cdot z_{\epsilon_{k}} \dnu{y_{p+1} \cdots y_{s_{k}-1}}
    z_{1-\epsilon_{k}} \dnui{y_{s_{k}-1}^{-1} \cdots y_{p+1}^{-1}} \detai{y_{p}^{-1} \cdots y_{1}^{-1}}}. \]
  We see that \( \gamma \) satisfies \eqref{item:simple} if the above product is reduced (including the case \( p = 0
  \)), and finally if \( p \ge 1 \), \( y_{p} \) and \( z_{\epsilon_{k}} \) are multipliable and
  \( y_{p} \cdot z_{\epsilon_{k}} \ne e \), then the reduced form of \( \gamma \) is
  \[ \gamma = \daggerform{\deta{y_{1} \cdots y_{p-1}} u \dnu{y_{p+1} \cdots y_{s_{k}-1}} z_{1-\epsilon_{k}}
    \dnui{y_{s_{k}-1}^{-1} \cdots y_{p+1}^{-1}} y_{p}^{-1} \detai{y_{p-1}^{-1} \cdots y_{1}^{-1}}}, \quad
  u=y_{p} \cdot z_{\epsilon_{k}}, \] which also has the form described by the item \eqref{item:mixed}, because \(
  y_{p}z_{\epsilon_{k}}y_{p}^{-1} = e \) if and only if \( z_{\epsilon_{k}} = e \).

  This exhausts all the possibilities and therefore the lemma is proved.
\end{proof}

\begin{remark}
  \label{rem:description-of-S-squared-is-complete}
  It is easy to see that any element in \( \Gamma \) with the reduced form satisfying one of these items is, in fact, an
  element in \( S^{c}_{\Gamma} \cdot S^{c}_{\Gamma} \) and so the description above is complete, but we shall not use this.
\end{remark}

\begin{corollary}
  \label{cor:for-of-an-asymmetric-element}
  If \( \gamma \in S^{c}_{\Gamma} \cdot S^{c}_{\Gamma} \) has the reduced form \( \gamma = x_{1} \cdots x_{n} \) with
  \( n \ge 2 \) and \( x_{1} \), \( x_{n} \) are not multipliable, then the reduced form of \( \gamma \) can be written
  as \( \mu z_{0} \mu^{-1} \nu z_{1} \nu^{-1} \) for some \( \mu, \nu \in \Gamma \) and some \( z_{0}, z_{1} \in
  X_{\Gamma} \setminus \{e\} \).
\end{corollary}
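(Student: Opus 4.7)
The plan is to invoke Lemma \ref{lem:description-of-elements-in-S-squared} on $\gamma$ and then rule out all but one of the four possible reduced forms by inspecting the first and last letters of the word, using the hypothesis that $x_{1}$ and $x_{n}$ are not multipliable.

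The key observation is that in each of the four displayed reduced forms \eqref{item:trivial}--\eqref{item:full}, the outermost pieces of $\gamma$ are $\eta$ on the left and $\eta^{-1}$ on the right. Hence if $\eta \ne e$, then $x_{1}$ is the leading letter of $\eta$ and $x_{n}$ is its inverse; these two letters lie in the same factor $\Gamma_{a}$, so they are multipliable, contradicting the hypothesis. Therefore I may restrict to $\eta = e$ in every case.

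With $\eta = e$, case \eqref{item:trivial} gives $\gamma = z$, a single letter, contradicting $n \ge 2$. Case \eqref{item:mixed} gives $\gamma = \delta_{1} \nu z \nu^{-1} \delta_{2}$ where the lemma guarantees that $\delta_{1}$ and $\delta_{2}$ are multipliable; but then $x_{1} = \delta_{1}$ and $x_{n} = \delta_{2}$ are multipliable, again a contradiction. Case \eqref{item:full} gives $\gamma = \delta_{1} \mu z_{0} \mu^{-1} \delta_{2} \nu z_{1} \nu^{-1} \delta_{3}$ with $\delta_{1}, \delta_{3}$ both multipliable with $\delta_{2}$, hence multipliable with each other, contradicting the hypothesis on $x_{1} = \delta_{1}$ and $x_{n} = \delta_{3}$. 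Only case \eqref{item:simple} with $\eta = e$ survives, yielding exactly the required form $\gamma = \mu z_{0} \mu^{-1} \nu z_{1} \nu^{-1}$.

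The argument is essentially bookkeeping, and I expect no substantive obstacle: the one point to double-check is that in each displayed form the letter $x_{1}$ really is the first letter of $\eta$ and $x_{n}$ really is its inverse. This is immediate because Lemma \ref{lem:description-of-elements-in-S-squared} states that each of the displayed products is the reduced form of $\gamma$, so no cancellation or absorption can occur between $\eta$ and its neighbours, or between $\eta^{-1}$ and its neighbours.
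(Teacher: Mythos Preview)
Your argument is correct and is exactly the intended one: the paper states this corollary without proof, leaving it as an immediate consequence of Lemma~\ref{lem:description-of-elements-in-S-squared}, and your case-by-case elimination via the first and last letters is precisely how one reads it off.
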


\begin{lemma}
  \label{lem:S-squared-is-not-Gamma}
  For \( \Gamma = \freeprod \) the inclusion \( S_{\Gamma}^{c} \cdot S_{\Gamma}^{c} \subset \Gamma \) is proper,
  unless \( \Gamma = \mathbb{Z}_{2} \freepr \mathbb{Z}_{2} \).
\end{lemma}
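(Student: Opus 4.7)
The plan is to exhibit, under the hypothesis $\Gamma \ne \mathbb{Z}_2 \freepr \mathbb{Z}_2$, an explicit element $\gamma \in \Gamma$ whose reduced form fits none of the templates \eqref{item:trivial}--\eqref{item:full} of Lemma \ref{lem:description-of-elements-in-S-squared}, and hence does not lie in $S_\Gamma^c \cdot S_\Gamma^c$. The case $\card{A} \geq 3$ is immediate: pick distinct $a_1, a_2, a_3 \in A$ and non-identity $g_i \in \Gamma_{a_i}$, and take $\gamma = g_1 g_2 g_3$. This is reduced of length three with outer letters in different factors, hence not multipliable, so by Corollary \ref{cor:for-of-an-asymmetric-element} any element of $S_\Gamma^c \cdot S_\Gamma^c$ with these properties has reduced form $\mu z_0 \mu^{-1} \nu z_1 \nu^{-1}$ of even length, contradicting $\wlen{\gamma} = 3$.

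The substantive case is $\card{A} = 2$; write $\Gamma = \Gamma_1 \freepr \Gamma_2$ with $\card{\Gamma_1} \geq 3$ after relabelling. I choose $a_1, a_2, a_3 \in \Gamma_1 \setminus \{e\}$ so that $a_i a_j \ne e$ for all $i \ne j$: if $\Gamma_1$ has an element $g$ of order greater than two, take $a_1 = a_2 = a_3 = g$; otherwise $\Gamma_1$ is an elementary abelian $2$-group of order at least four, and any three distinct non-identity elements suffice. Fix $b \in \Gamma_2 \setminus \{e\}$ and set $\gamma = a_1 b a_2 b a_3 b$, a reduced word of length six. Parity excludes the odd-length templates \eqref{item:trivial}, \eqref{item:mixed}, \eqref{item:full}, so if $\gamma \in S_\Gamma^c \cdot S_\Gamma^c$, the reduced form of $\gamma$ must match \eqref{item:simple} with $\card{\eta} + \card{\mu} + \card{\nu} = 2$. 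Since the first and last letters of $\gamma$ lie in different factors while those of $\eta \cdots \eta^{-1}$ would lie in the same factor, $\eta$ is forced to be empty. A position-by-position comparison for each $(\card{\mu}, \card{\nu}) \in \{(2,0), (1,1), (0,2)\}$ then forces one of $a_1 a_3 = e$, $a_1 a_2 = e$, or $a_2 a_3 = e$ respectively, each excluded by our choice of the $a_i$.

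The main obstacle is the combinatorial bookkeeping in the $\card{A} = 2$ case: all partitions of $2$ among $(\card{\eta}, \card{\mu}, \card{\nu})$ in form \eqref{item:simple} must be enumerated and disposed of, combining the two-factor asymmetry of $\gamma$'s endpoints (which kills $\card{\eta} \geq 1$) with the pairwise non-cancellation among the $a_i$ (which kills the three remaining matchings). The selection of the $a_i$ when $\Gamma_1$ is an $\mathbb{F}_2$-vector space is a subsidiary point where the bound $\card{\Gamma_1} \geq 4$ is used.
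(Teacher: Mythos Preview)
Your proof is correct and follows essentially the same approach as the paper's: both exhibit an explicit witness outside \( S^{c}_{\Gamma}\cdot S^{c}_{\Gamma} \) by appealing to the structural description in Lemma~\ref{lem:description-of-elements-in-S-squared} (via Corollary~\ref{cor:for-of-an-asymmetric-element}), using \( g_{1}g_{2}g_{3} \) for three factors and \( a_{1}ba_{2}ba_{3}b \) for two. Your case split (by \( \card{A} \) first, then by the structure of \( \Gamma_{1} \)) and your use of parity to rule out templates \eqref{item:trivial}, \eqref{item:mixed}, \eqref{item:full} are a slight streamlining of the paper's organization, but the underlying argument is the same.
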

\begin{proof}
  \label{case:element-of-order-bigger-than-two}
  The proof splits into three cases.

  \medskip

  \emph{Case 1}. Suppose there is an \( a \in A \) and a non-trivial \( x \in \Gamma_{a} \) such that the order of
  \( x \) is not two, i.e., \( x \ne x^{-1} \).  Pick \( b \in A\setminus\{a\} \) and a non-trivial
  \( y \in \Gamma_{b} \) and let \( \gamma = (xy)^{3} = xyxyxy \).  We claim that
  \( \gamma \not \in S^{c}_{\Gamma} \cdot S^{c}_{\Gamma} \).  Indeed, if \( \gamma \) were in this set, then by
  Corollary \ref{cor:for-of-an-asymmetric-element} it would have a reduced form
  \( \mu z_{0} \mu^{-1} \nu z_{1} \nu^{-1} \) and using \( x \ne x^{-1} \) it is straightforward to see that this is not
  the case.

  \medskip The remaining cases deal with all \( \Gamma_{a} \) consisting of elements of order \( 2 \).  \medskip

  \emph{Case 2.} Suppose there is \( \Gamma_{a} \) such that \( \card{\Gamma_{a}} \ge 4 \), and pick
  \( b \in A\setminus \{a\} \).  Let \( x_{1}, x_{2}, x_{3} \in \Gamma_{a} \) be distinct non-trivial elements and pick
  a non-trivial \( y \in \Gamma_{b} \).  Using Corollary \ref{cor:for-of-an-asymmetric-element} and since
  \( x_{i} \ne x_{j}^{-1} \) for \( i \ne j \), one sees that
  \( \gamma = x_{1}yx_{2}yx_{3}y \not \in S^{c}_{\Gamma} \cdot S^{c}_{\Gamma} \).

  \medskip

  \emph{Case 3.}  We are left with the case when each of \( \Gamma_{a} \) is isomorphic to \( \mathbb{Z}_{2} \).  Since
  \( \Gamma = \mathbb{Z}_{2} \freepr \mathbb{Z}_{2} \) is excluded in the statement of the theorem, we may assume that
  there are at least three factors.  Let \( x, y, z \) be elements of order \( 2 \) from three different factors and set
  \( \gamma = xyz \).  Note that the reduced form of \( \gamma \) cannot be written as 
  \( \mu z_{0} \mu^{-1} \nu z_{1} \nu^{-1} \) and therefore Corollary \ref{cor:for-of-an-asymmetric-element} finishes
  the proof.
\end{proof}

\begin{remark}
  \label{rem:for-dihedral-S-squared-is-the-whole-group}
  It is easy to check that for the infinite dihedral group \( \mathbb{Z}_{2}\freepr \mathbb{Z}_{2} \) the equality
  \( S^{c}_{\mathbb{Z}_{2}\freepr\mathbb{Z}_{2}} \cdot S^{c}_{\mathbb{Z}_{2}\freepr\mathbb{Z}_{2}} =
  \mathbb{Z}_{2}\freepr\mathbb{Z}_{2} \) indeed holds: \( S^{c}_{\mathbb{Z}_{2}\freepr\mathbb{Z}_{2}} \) consists of all
  elements of odd length plus the identity, so its square is the whole group.
\end{remark}

\section{Completely metrizable free products are discrete}
\label{sec:free-prod-discrete}

In this section the group \( \Gamma = \Gamma_{1} \freepr \Gamma_{2} \) is assumed to have exactly two factors unless
stated otherwise.
\begin{definition}
  \label{def:one-two-type-words}
  For a pair \( (i,j) \in \{1,2\}\times\{1,2\} \) we say that \( \gamma \in \Gamma\setminus\{e\}\) is of type
  \( (i,j) \) if for the reduced form of \( \gamma = x_{1}\cdots x_{n} \) one has \( x_{1} \in \Gamma_{i} \) and
  \( x_{n} \in \Gamma_{j} \).  The type \( (i,j) \) is called \emph{symmetric} if \( i = j \) and \emph{asymmetric}
  otherwise.  Since \( \Gamma = \Gamma_{1} \freepr \Gamma_{2} \) has only two factors, whether an element
  \( \gamma \in \Gamma \) has a symmetric or an asymmetric type depends only upon the parity of its length.  Type
  \( (i,j) \) is said to be the \emph{opposite} of type \( (j,i) \).
\end{definition}

\begin{lemma}
  \label{lem:types-of-non-discrete-sequences}
  Let \( \Gamma \ne \mathbb{Z}_{2} \freepr \mathbb{Z}_{2} \) be a topological group and let \( \gamma_{n} \) be a
  sequence such that \( \gamma_{n} \ne e \), \( \gamma_{n} \to e \) and all the elements \( \gamma_{n} \) have the same
  type.  There is a subsequence \( \gamma_{n_{k}} \) satisfying the following property: for any type \( (i,j) \) there
  is a sequence \( \lambda_{n_{k}} \in \Gamma \), such that \( \lambda_{n_{k}} \to e \), each \( \lambda_{n_{k}} \) has
  type \( (i,j) \), and \( \wlen{\gamma_{n_{k}}} \le \wlen{\lambda_{n_{k}}} \le 4\wlen{\gamma_{n_{k}}} \) for all \( k \).
\end{lemma}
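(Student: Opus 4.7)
The strategy is to construct each $\lambda_{n_k}$ as $\phi(\gamma_{n_k})$ for a fixed continuous map $\phi : \Gamma \to \Gamma$ with $\phi(e) = e$---specifically, conjugation $\gamma \mapsto y \gamma^{\pm 1} y^{-1}$ or a commutator $\gamma \mapsto \gamma y \gamma^{-1} y^{-1}$ or $\gamma \mapsto y \gamma y^{-1} \gamma^{-1}$ with $y \in \Gamma$ fixed.  Because $y$ is fixed, continuity of the group operations guarantees $\lambda_{n_k} \to e$ automatically.  I would first collect the finitely many conjugators that will be needed for the four target types into a single set $Y \subset \Gamma$, and then pass to a subsequence $(\gamma_{n_k})$ on which, for every $y \in Y$, the products $y \cdot x_1^{(n_k)}$ and $x_m^{(n_k)} \cdot y$ exhibit uniform behavior in $k$ (either always equal to $e$, or never).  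This pigeonhole step, applied finitely many times, ensures that the reduced form of each expression $y \gamma_{n_k} y^{-1}$, $[\gamma_{n_k}, y]$, etc., is governed by a single recipe for all $k$.

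For the target types equal to or opposite to the source type, take $\lambda_{n_k} = \gamma_{n_k}$ or $\gamma_{n_k}^{-1}$ respectively.  When the source is symmetric, say $(1,1)$, the remaining two target types are handled by conjugation $y \gamma_{n_k} y^{-1}$ with $y \in \Gamma_2 \setminus \{e\}$ (giving type $(2,2)$ and length $\wlen{\gamma_{n_k}} + 2$) and by the commutators $[\gamma_{n_k}, y]$, $[y, \gamma_{n_k}]$ with $y \in \Gamma_2 \setminus \{e\}$ (giving types $(1,2)$ and $(2,1)$ and length $2 \wlen{\gamma_{n_k}} + 2 \le 4 \wlen{\gamma_{n_k}}$).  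In each of these cases the endpoint letters of $\gamma_{n_k}$ lie in $\Gamma_1$ while $y \in \Gamma_2$, so no cancellation occurs and the length formulas are clean; symmetric source $(2,2)$ is identical after swapping the factor indices.

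The delicate case is an asymmetric source, say type $(1,2)$, going to a symmetric target such as $(1,1)$.  If $\card{\Gamma_1} \ge 3$, pick two distinct nontrivial $y_1, y_2 \in \Gamma_1$; by pigeonhole one of them, call it $y$, satisfies $y^{-1} \ne x_1^{(n_k)}$ throughout the subsequence, so $y \gamma_{n_k} y^{-1}$ has length $\wlen{\gamma_{n_k}} + 1$ and type $(1,1)$.  If $\card{\Gamma_1} = 2$, with $\Gamma_1 = \{e, a\}$ and $a^2 = e$, then the hypothesis $\Gamma \ne \mathbb{Z}_2 \freepr \mathbb{Z}_2$ forces $\card{\Gamma_2} \ge 3$; I pass to a subsequence so that some fixed $b \in \Gamma_2 \setminus \{e\}$ satisfies $b \ne x_m^{(n_k)}$ for all $k$, and then the length-two conjugator $y = ab$ yields $y \gamma_{n_k} y^{-1}$ of type $(1,1)$ with length $\wlen{\gamma_{n_k}} + 3 \le 4 \wlen{\gamma_{n_k}}$.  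A mirror-image construction handles target $(2,2)$, and asymmetric source $(2,1)$ is handled by swapping factor indices.

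\textbf{Main obstacle.}  The principal difficulty is the case when one factor is $\mathbb{Z}_2$.  There, the unique nontrivial element $a$ is self-inverse and coincides with the first letter of every type-$(1,\cdot)$ element, so conjugation by $a$ itself produces two simultaneous cancellations and fails to deliver the desired symmetric target type.  The length-two conjugator $y = ab$ fixes this, but then avoiding the secondary cancellation $x_m b^{-1} = e$ demands at least three elements in the other factor---this is exactly where the hypothesis $\Gamma \ne \mathbb{Z}_2 \freepr \mathbb{Z}_2$ is used.  A secondary bookkeeping task is coordinating a single subsequence that serves all four target types simultaneously, achieved by collecting all conjugators into $Y$ upfront and applying pigeonhole to $Y$.
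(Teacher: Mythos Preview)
Your proposal is correct and follows essentially the same strategy as the paper: manufacture the desired type by conjugating (or forming a short product involving) $\gamma_{n_k}$ by a fixed element, using a pigeonhole/subsequence step to prevent unwanted cancellation at the endpoint letters.  The only notable differences are cosmetic.  For the symmetric source the paper uses $x\gamma_n x^{-1}\cdot \gamma_n$ rather than your commutator $[\gamma_n,y]$, but both are reduced words of length $2\wlen{\gamma_n}+2$ and of the correct asymmetric type.  For the asymmetric source the paper avoids your length-two conjugator $ab$ entirely: since $\Gamma\neq\mathbb{Z}_2\freepr\mathbb{Z}_2$, it simply assumes without loss of generality that $\Gamma_1\neq\mathbb{Z}_2$ and that the source type is $(1,2)$ (replacing $\gamma_n$ by $\gamma_n^{-1}$ if necessary), and then always conjugates by a single letter from the larger factor $\Gamma_1$; a further single-letter conjugation from $\Gamma_2$ gives the remaining symmetric type.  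This WLOG reduction is a bit cleaner than your case split on $\card{\Gamma_1}$, but your direct treatment of the $\mathbb{Z}_2$ factor via the conjugator $ab$ is also perfectly valid and produces length $\wlen{\gamma_{n_k}}+3\le 4\wlen{\gamma_{n_k}}$ as required.
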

\begin{proof}
  Depending on the type of \( \gamma_{n} \) we have two cases.

  \emph{Case 1: \( \gamma_{n} \) have symmetric type \( (i,i) \).}  If \( x \in \Gamma_{j} \) is non-trivial for
  \( j \ne i \), then elements \( \lambda_{n} = x \gamma_{n} x^{-1} \) have type \( (j,j) \), converge to the identity and
  \( \wlen{\lambda_{n}} = \wlen{\gamma_{n}} + 2 \le 3\wlen{\gamma_{n}} \).  The sequence \( \lambda_{n}\gamma_{n} \) has
  type \( (j,i) \), and \( \gamma_{n}\lambda_{n} \) is of type \( (i,j) \), and elements in both sequences have length
  at most \( 4||\gamma_{n}|| \).  Note that in this
  case we do not have to pass to a subsequence of \( \gamma_{n} \).

  \emph{Case 2: \( \gamma_{n} \) have asymmetric type \( (i,j) \), \( i \ne j \).}  One of \( \Gamma_{1}, \Gamma_{2} \)
  is not isomorphic to \( \mathbb{Z}_{2} \), so let \( \Gamma_{1} \ne \mathbb{Z}_{2} \).  Note that \( \gamma_{n} \) is
  of type \( (i,j) \) implies \( \gamma_{n}^{-1} \) is of the opposite type \( (j,i) \).  So there is no loss in
  generality in assuming \( \gamma_{n} \) is of type \( (1,2) \).

  \emph{Subcase 1: there is \( x \in \Gamma_{1} \) such that for infinitely many \( n \) the first letter of
    \( \gamma_{n} \) is \( x \).} Let \( (n_{k}) \) be such that all the elements \( \gamma_{n_{k}} \) start with
  \( x \) and pick a \( y \in \Gamma_{1} \setminus \{e\} \) such that \( x \cdot y \ne e \), then
  \( \lambda_{n_{k}} = y\gamma_{n_{k}}y^{-1} \) converges to the identity, all the elements \( \lambda_{n_{k}} \) have type
  \( (1,1) \) and \( \wlen{\gamma_{n_{k}}} \le \wlen{\lambda_{n_{k}}} \le 2\wlen{\gamma_{n_{k}}} \).  Similarly to the first
  case one can conjugate \( \lambda_{n_{k}} \) by a non-trivial element from \( \Gamma_{2} \) to get a sequence of type
  \( (2,2) \).
 
  \emph{Subcase 2: for any \( x \in \Gamma_{1} \) there are only finitely many \( \gamma_{n} \) which start with
    \( x \)}.  Let \( x \in \Gamma_{1} \) be any non-trivial element, then \( x\gamma_{n}x^{-1} \) converges to
  the identity, \( \wlen{\gamma_{n}} \le \wlen{x\gamma_{n}x^{-1}} \le 2\wlen{\gamma_{n}} \) and for all but possibly
  finitely many \( n \) elements \( x \gamma_{n} x^{-1} \) have type \( (1,1) \).  Passing to a subsequence to avoid
  this finite set of \( n \)'s we obtain \( \lambda_{n_{k}} \) of type \( (1,1) \), which again can be conjugated by any
  non-trivial element from \( \Gamma_{2} \) into a sequence of type \( (2,2) \).
\end{proof}

\begin{lemma}
  \label{lem:long-words-in-open-sets}
  If \( \Gamma = \Gamma_{1} * \Gamma_{2} \) is a non-discrete metrizable topological group, then the word length
  function is unbounded on any non-empty open set.
\end{lemma}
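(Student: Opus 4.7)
The plan is to argue by contradiction.  Suppose some non-empty open set $U$ has $\wlen{\cdot}$ bounded by $M$.  Fixing $\gamma_{0} \in U$ and using $\wlen{g} \le \wlen{\gamma_{0}^{-1}} + \wlen{\gamma_{0}g}$, the left translate $V = \gamma_{0}^{-1}U$ is an open neighborhood of $e$ on which $\wlen{\cdot} \le N$, where $N = M + \wlen{\gamma_{0}}$.  So it suffices to derive a contradiction under the assumption that there is an open neighborhood $V$ of $e$ with $\wlen{g} \le N$ for every $g \in V$.  Non-discreteness and metrizability yield a sequence $g_{n} \ne e$ with $g_{n} \to e$; eventually $g_{n} \in V$, so $\wlen{g_{n}} \le N$.

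In the main case $\Gamma \ne \mathbb{Z}_{2} \freepr \mathbb{Z}_{2}$, pigeonholing over the four possible types lets us assume all $g_{n}$ share a common type, after which Lemma \ref{lem:types-of-non-discrete-sequences} supplies sequences $\lambda_{k} \to e$ of type $(1,1)$ and $\mu_{k} \to e$ of type $(2,2)$, both satisfying $\wlen{\lambda_{k}}, \wlen{\mu_{k}} \le 4N$.  Given any $K \ge 1$, continuity of multiplication at $e$ produces an open neighborhood $W$ of $e$ with $W^{2K} \subseteq V$; for $k$ large enough, $\lambda_{k}, \mu_{k} \in W$.  Since the last letter of $\lambda_{k}$ lies in $\Gamma_{1}$ and the first letter of $\mu_{k}$ in $\Gamma_{2}$ (and vice versa), no cancellations occur when reducing $(\lambda_{k}\mu_{k})^{K}$, so its reduced length is exactly $K(\wlen{\lambda_{k}} + \wlen{\mu_{k}}) \ge 2K$.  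Choosing $K > N$ exhibits an element of $V$ violating the bound, a contradiction.

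For the exceptional case $\Gamma = \mathbb{Z}_{2} \freepr \mathbb{Z}_{2}$, Lemma \ref{lem:types-of-non-discrete-sequences} is unavailable, but only finitely many elements of $\Gamma$ have word length at most $N$, so eventually the $g_{n}$ take values in a finite set.  Some value $f$ must be attained by infinitely many $g_{n}$; the corresponding constant subsequence converges to $f$, but as a subsequence of $g_{n} \to e$ it also converges to $e$, and Hausdorffness forces $f = e$, contradicting $g_{n} \ne e$.

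The main obstacle is converting the mere existence of a short sequence $g_{n} \to e$ into an element of $V$ of arbitrarily large reduced length.  The no-cancellation structure of $(\lambda_{k}\mu_{k})^{K}$ is what produces this blow-up, and the need to choose factors of the prescribed symmetric types $(1,1)$ and $(2,2)$ is precisely why Lemma \ref{lem:types-of-non-discrete-sequences} — and its dihedral exclusion — must enter the argument.
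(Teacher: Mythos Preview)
Your proof is correct and follows essentially the same approach as the paper: both invoke Lemma~\ref{lem:types-of-non-discrete-sequences} to produce non-trivial elements of prescribed type converging to $e$, then use the type structure to avoid cancellations and manufacture arbitrarily long reduced words inside the given open set, handling the dihedral case separately via finiteness of bounded-length balls. The only cosmetic difference is that the paper, rather than forming the power $(\lambda_k\mu_k)^K$, iteratively prepends a single $\lambda$ of suitable type to an element $\gamma \in U$ to get a strictly longer element $\lambda\gamma \in U$, which is marginally quicker but rests on the same idea.
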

\begin{proof}
  If \( \Gamma \ne \mathbb{Z}_{2} \freepr \mathbb{Z}_{2} \), then by Lemma \ref{lem:types-of-non-discrete-sequences} for
  any open set \( U \) and any \( \gamma \in U \) we can find \( \lambda \ne e \) such that the product
  \( \lambda \gamma \) is reduced and \( \lambda \gamma \in U \), so
  \( \wlen{\lambda\gamma} = \wlen{\lambda}+\wlen{\gamma} \), and thus \( \wlen{\cdot} \) is unbounded on \( U \).

  For \( \Gamma = \mathbb{Z}_{2} \freepr \mathbb{Z}_{2} \) the statement is obvious, since for such a \( \Gamma \) the
  set of elements of length at most \( n \) is finite for any \( n \) and \( \Gamma \) being non-discrete implies finite
  sets cannot be open.
\end{proof}

Recall that a subset of a metric space is called \emph{separated} if there is a positive lower bound on
the distance between its distinct points.

\begin{lemma}
  \label{lem:S-squared-has-empty-interior}
  If \( \Gamma  = \Gamma_{1} * \Gamma_{2} \) is completely metrizable and non-discrete, then the interior of
  \( S^{c}_{\Gamma} \cdot S^{c}_{\Gamma} \) is empty.
\end{lemma}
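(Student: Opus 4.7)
My approach is a proof by contradiction. Assume that $U \subseteq S^c_\Gamma \cdot S^c_\Gamma$ is a non-empty open set. Since $\Gamma$ is completely metrizable and non-discrete, $\Gamma$ is uncountable: in a non-discrete Hausdorff topological group every singleton is closed and nowhere dense, so a countable completely metrizable group would be meager in itself, contradicting the Baire category theorem. In particular $\Gamma \ne \mathbb{Z}_2 \freepr \mathbb{Z}_2$, so the hypotheses of Lemmas \ref{lem:S-squared-is-not-Gamma} and \ref{lem:types-of-non-discrete-sequences} are satisfied.

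Fix $g \in U$ with reduced form $g = x_1 \cdots x_N$, and set $V = g^{-1}U$, an open neighborhood of $e$. By Lemma \ref{lem:long-words-in-open-sets} the word length is unbounded on $V$, and by Lemma \ref{lem:types-of-non-discrete-sequences} we can find elements of $V$ of arbitrary length with any prescribed type. I pick $\lambda \in V$ of some type $(i, j)$ with $i$ distinct from the factor of $x_N$ (so $g\lambda$ reduces to the concatenation $x_1 \cdots x_N w_1 \cdots w_M$ of length $L := N+M$) and $j$ distinct from the factor of $x_1$ (so the first and last letters of $g\lambda$ lie in different factors and are hence not multipliable); the parity of $M$ is chosen to make $L$ even. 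Since $g\lambda \in U \subseteq S^c_\Gamma \cdot S^c_\Gamma$, Corollary \ref{cor:for-of-an-asymmetric-element} forces $g\lambda = \mu z_0 \mu^{-1} \nu z_1 \nu^{-1}$ in reduced form, with $\wlen{\mu} + \wlen{\nu} = L/2 - 1$.

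The remainder of the argument is a combinatorial case analysis ruling out every admissible value of $\wlen{\mu}$. The palindromic structure of form (ii) requires the letter at position $\wlen{\mu}+1+i$ of $g\lambda$ to be the inverse of the letter at position $\wlen{\mu}+1-i$ for each $1 \le i \le \wlen{\mu}$, and a similar condition for the $\nu$-block. For small values of $\wlen{\mu}$ with $\wlen{\mu}+1 \le N$ the palindromic constraint is internal to $g$'s word; for larger values it relates $\lambda$'s interior to $g$'s prefix. The extreme-position condition for the $\nu$-block forces $w_M$ to equal a specific element determined by $\wlen{\mu}$, namely $x_{2\wlen{\mu}+2}^{-1}$ when $2\wlen{\mu}+2 \le N$ and $w_{2\wlen{\mu}+2-N}^{-1}$ otherwise. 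Using the explicit construction in the proof of Lemma \ref{lem:types-of-non-discrete-sequences} (conjugation of a base sequence by a fixed non-trivial element of the uncountable factor of $\Gamma$), I can prescribe $w_M$ to be any non-trivial element of that factor, avoiding the finite list of forbidden values coming from the first range. The main obstacle is the second range, where the constraint is ``internal'' to $\lambda$ and reads $w_{2\wlen{\mu}+2-N} = w_M^{-1}$; to handle it I would replace $g$ in advance by an element of the form $g \lambda'$ for a small $\lambda' \in V$, choosing $\lambda'$ so that the resulting word has no non-trivial interior inverse-palindromic substructure, which together with a careful prescription of $\lambda$'s interior produces the required contradiction.
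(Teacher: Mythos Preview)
Your opening moves are sound: you correctly argue that $\Gamma$ is uncountable, pick $g\in U$, translate to $V=g^{-1}U$, and use Corollary~\ref{cor:for-of-an-asymmetric-element} to reduce the question to ruling out every decomposition $g\lambda=\mu z_{0}\mu^{-1}\nu z_{1}\nu^{-1}$ indexed by the value $k=\wlen{\mu}$. Your treatment of the ``first range'' $2k+2\le N$ via the outermost $\nu$-constraint and a prescribed last letter $w_{M}$ is also fine.

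The genuine gap is in the last paragraph. For $k$ with $2k+2>N$ the relevant constraints live entirely inside $\lambda$, and you need to violate at least one of them for \emph{every} such $k$ simultaneously. The tools you invoke do not give you this. Lemma~\ref{lem:types-of-non-discrete-sequences} lets you prescribe the type of $\lambda$---effectively the first and last letters---but says nothing about the interior of $\lambda$; the phrase ``a careful prescription of $\lambda$'s interior'' has no backing. Likewise, the step ``replace $g$ by $g\lambda'$ for a small $\lambda'$ chosen so that the resulting word has no non-trivial interior inverse-palindromic substructure'' is an assertion, not an argument: you have exactly the same lack of control over $\lambda'$ as over $\lambda$, and you have not shown that any element in $V$ has the desired aperiodicity property. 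Since the number of constraints in the second range grows with $\wlen{\lambda}$ and you have no handle on how many elements of $V$ (if any) avoid all of them, the argument stops here.

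The paper closes this gap with an idea you are missing: first, by a Baire category argument on the level sets $L_{n}=\{\gamma:\wlen{\gamma}\le n\}$, it secures a \emph{uniform bound} $M$ on the length of approximating elements of each type; second, instead of a single perturbation, it starts from a long word $\lambda\in U$ with $\wlen{\lambda}\ge 8M+2$ and inserts a short element $\gamma_{k}$ (of length $\le M$) between \emph{every} pair of consecutive letters, choosing each $\gamma_{k}$ to contain a letter $y_{k}$ that has not appeared (even as an inverse) anywhere earlier in the word. This ``fresh letter at every position'' mechanism is what rules out the palindromic form directly, without any need to control a single perturbation's interior. Your one-shot perturbation $g\mapsto g\lambda$ cannot reproduce this effect.
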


\begin{proof}
  First of all \( \Gamma \) being completely metrizable and non-discrete must be uncountable, so there is no need to
  worry about \( \Gamma \) being isomorphic to the infinite dihedral group.  Fix a left-invariant metric \( d \) on
  \( \Gamma \) and set \( L_{n} = \{ \gamma \in \Gamma: \wlen{\gamma} \le n \} \), \( \Gamma = \bigcup_{n} L_{n} \).
  Since a separated set is necessarily closed and nowhere dense (by non-discreteness of the group), Baire's theorem
  ensures the existence of \( N \) such that \( L_{N} \) is not separated: there are sequences
  \( \gamma_{n}', \gamma_{n}'' \in L_{N} \) such that \( \gamma'_{n} \ne \gamma''_{n} \) for all \( n \) and
  \( d(\gamma_{n}',\gamma_{n}'') \to 0 \).  By left-invariance of \( d \) we see that
  \( \gamma_{n} = (\gamma_{n}'')^{-1}\gamma_{n}' \) converges to \( e \), and \(\gamma_{n} \in L_{2N} \).  By passing to
  a subsequence and using Lemma \ref{lem:types-of-non-discrete-sequences} we may assume that \( M \) is such that for
  any type \( (i,j) \) there is a sequence \( \bar{\gamma}_{n} \in L_{M} \), \( \bar{\gamma}_{n} \to e \) that consists
  of elements of type \( (i,j) \).
 
  Pick a non-empty open \( U \); we shall find an element in \( U \) but not in
  \( S^{c}_{\Gamma} \cdot S^{c}_{\Gamma} \). By Lemma \ref{lem:long-words-in-open-sets} find \( \lambda \in U \) with
  the reduced form \( \lambda = x_{1} \cdots x_{l} \), \( l \ge 8M+2 \).  We now construct inductively elements
  \( \gamma_{k} \) for \( k=1, \ldots, l \) such that for
  \[ \lambda_{k} = x_{1}\gamma_{1}x_{2}\gamma_{2} \cdots x_{k}\gamma_{k}x_{k+1}x_{k+2}\cdots x_{l} \]
  and for all \( k \) we have
  \begin{enumerate}[(i)]
  \item the product in the definition of \( \lambda_{k} \) is reduced;
  \item \( \lambda_{k} \in U \);
  \item \( \wlen{\gamma_{k}} \le M \);
  \item\label{item:new-letters} if \( W_{k} \) denotes the set of letters in the reduced form of \( \lambda_{k} \), then
    there is a letter \( y_{k} \) in the reduced form of \( \gamma_{k} \) such that \( y_{k} \not \in W_{k-1}^{-1} \);
  \item\label{item:last-step} the last element \( \lambda_{l} \) has an asymmetric type \( (1,2) \) or \( (2,1) \).
  \end{enumerate}
  The first step of the construction is not any different from the later steps, so we show how to find \( \gamma_{k} \).
  Suppose that \( \gamma_{1},\ldots,\gamma_{k-1} \) have been constructed.  By the choice of \( L_{M} \) we can find a
  sequence \( \bar{\gamma}_{n} \to e \), such that \( \wlen{\bar{\gamma}_{n}} \le M \) and
  \( x_{k}\bar{\gamma}_{n}x_{k+1} \) is reduced for all \( n \) (and we take \( x_{l+1} \) to be \( x_{1} \) in the last
  step to satisfy item \eqref{item:last-step}).  Throwing away finitely elements from the sequence we may assume that
  \( x_{1}\gamma_{1} \cdots x_{k-1}\gamma_{k-1}x_{k}\bar{\gamma}_{n}x_{k+1}\cdots x_{l} \in U \) for all \( n \).  Since
  \( W_{k-1}^{-1} \) is finite and since there are only finitely many words in the alphabet \( W_{k-1}^{-1} \) of length
  at most \( M \), there must be \( \bar{\gamma}_{n_{0}} \) that has a letter not in \( W_{k-1}^{-1} \) in its reduced
  form; we set \( \gamma_{k} = \bar{\gamma}_{n_{0}} \).

  We finally claim that \( \lambda_{l} \not \in S^{c}_{\Gamma} \cdot S^{c}_{\Gamma} \).  Indeed, suppose
  \( \lambda_{l} \) were in this set.  Since \( \lambda_{l} \) is of asymmetric type, Corollary
  \ref{cor:for-of-an-asymmetric-element} implies the reduced form of \( \lambda_{l} \) can be written as
  \( \mu z_{0} \mu^{-1} \nu z_{1} \nu^{-1}\).  Assume for definiteness that \( \wlen{\mu} \ge \wlen{\nu} \), then
  \( \wlen{\mu} \ge \frac{8M+2-2}{4} = 2M \) and therefore \( \wlen{\gamma_{k}} \le M \) implies there is
  \( 1 \le k \le l \) such that \( \gamma_{k} \subseteq \mu^{-1} \), but then every letter in \( \gamma_{k} \) is also
  in \( W_{k-1}^{-1}\), contradicting the construction.

  Since \( \lambda_{l} \in U \), we have proved that
  \( U \not \subseteq S^{c}_{\Gamma} \cdot S^{c}_{\Gamma} \), and since \( U \) was arbitrary, the conclusion follows.
\end{proof}

\begin{theorem}
  \label{thm:free-products-are-not-completely-metrizable}
  Any completely metrizable group topology on \( \Gamma \) is discrete.
\end{theorem}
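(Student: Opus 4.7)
Assume for contradiction that $\Gamma = \Gamma_{1}\freepr \Gamma_{2}$ admits a non-discrete completely metrizable group topology, and fix a compatible complete left-invariant metric $d$. The plan is to run the Dudley scheme rehearsed in the introduction, applied to the identity map $\mathrm{id}:\Gamma \to \Gamma$ with the word-length function $\wlen{\cdot}$ as the length function on the target. Specifically, I would construct inductively a sequence $g_{m} \in \Gamma \setminus \{e\}$ converging to $e$ rapidly enough that, setting $r_{m} = m + \sum_{l=1}^{m}\wlen{g_{l}}$, the iterated products
\[ \widetilde{h}_{n,m} = g_{m}\bigl(g_{m+1}(\cdots(g_{n-1}(g_{n})^{r_{n-1}})^{r_{n-2}}\cdots)^{r_{m+1}}\bigr)^{r_{m}} \]
converge as $n \to \infty$ to elements $h_{m}$ satisfying $h_{m} = g_{m} h_{m+1}^{r_{m}}$.

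Given such a construction, the three conditions listed after Dudley's sketch hold: (iii) is automatic because $r_{m} - \sum_{l=1}^{m}\wlen{g_{l}} = m$, while (i) and (ii) both follow from the single requirement $h_{m} \in S_{\Gamma}$ for every $m\geq 2$ (since $\wlen{h_{m}^{r_{m-1}}}\ge r_{m-1}\ge 1$ forces $h_{m}^{r_{m-1}}\ne e$, and Lemma~\ref{lem:description-of-S-and-F}(\ref{item:when-power-is-not-shorter}) then yields $\wlen{h_{m}^{r_{m-1}}}\ge \wlen{h_{m}}$). Dudley's telescoping forces $\wlen{h_{1}}\ge m$ for every $m$, a contradiction. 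The whole task therefore reduces to arranging $h_{m}\in S_{\Gamma}$ for each $m\ge 2$; since $S_{\Gamma}^{c}\subseteq S_{\Gamma}^{c}\cdot S_{\Gamma}^{c}$, it is enough to arrange $h_{m}\notin S_{\Gamma}^{c}\cdot S_{\Gamma}^{c}$, and by Lemma~\ref{lem:S-squared-has-empty-interior} the complement $(S_{\Gamma}^{c}\cdot S_{\Gamma}^{c})^{c}$ is dense in $\Gamma$.

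The inductive construction of $g_{m}$ would proceed via a decreasing sequence of open neighborhoods $V_{m}$ of $e$ chosen so that (a) any choice of future $g_{k}\in V_{k}$ for $k>m$ makes the tail $h_{m+1}^{r_{m}}$ fall in a prescribed tiny neighborhood of $e$, guaranteeing convergence of the nested products and localising $h_{m}=g_{m}h_{m+1}^{r_{m}}$ in a prescribed neighborhood of $g_{m}$; and (b) $g_{m}$ itself is drawn from $V_{m-1}\setminus\{e\}$ with additional care so as to force the eventual $h_{m}$ into $(S_{\Gamma}^{c}\cdot S_{\Gamma}^{c})^{c}$. The genuine obstacle is that $(S_{\Gamma}^{c}\cdot S_{\Gamma}^{c})^{c}$ is merely dense and not open, so a naive ``pick $g_{m}$ in this set and perturb'' strategy cannot succeed. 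To overcome this, I would exploit the explicit structural description of $S_{\Gamma}^{c}\cdot S_{\Gamma}^{c}$ supplied by Lemma~\ref{lem:description-of-elements-in-S-squared}, mirroring the strategy used in the proof of Lemma~\ref{lem:S-squared-has-empty-interior}: at stage $m$ I would ensure that the reduced form of the partial product $\widetilde{h}_{n,m}$ (for large $n$) is long and carries sufficiently many ``new'' letters, outside every alphabet previously recorded, so that $\widetilde{h}_{n,m}$ cannot fit any of the four reduced shapes (\ref{item:trivial})--(\ref{item:full}) permitted for elements of $S_{\Gamma}^{c}\cdot S_{\Gamma}^{c}$, and then keep the subsequent $g_{k}$ small enough that the limit $h_{m}$ inherits the same incompatibility. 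This combinatorial bookkeeping on reduced forms is the heart of the argument and the main technical hurdle.
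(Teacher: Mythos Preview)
Your reduction to ``arrange $h_{m}\in S_{\Gamma}$ for all $m\ge 2$'' is correct, but the mechanism you propose for achieving it cannot work. You want to control the reduced form of $h_{m}=\lim_{n}\widetilde{h}_{n,m}$ by controlling the reduced forms of the approximants and then keeping later $g_{k}$ ``small enough that the limit $h_{m}$ inherits the same incompatibility''. But the topology on $\Gamma$ is an \emph{arbitrary} completely metrizable group topology; proximity in $d$ carries no information whatsoever about reduced forms. An element $h_{m}$ that is metrically very close to $\widetilde{h}_{n,m}$ can have a reduced form of any length and any shape. So there is no way to transfer the combinatorial bookkeeping of Lemma~\ref{lem:S-squared-has-empty-interior} across the limit. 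This is a genuine gap, not a technicality.

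The paper's proof avoids this by shifting the constraint from the uncontrollable limits $h_{m}$ to the directly chosen $g_{m}$. Two ideas replace your attempted control of $h_{m}$. First, one chooses each $g_{m}\notin S_{\Gamma}^{c}\cdot S_{\Gamma}^{c}$; since $h_{m}=g_{m}h_{m+1}^{r_{m}}$ gives $g_{m}=h_{m}\cdot h_{m+1}^{-r_{m}}$ and $S_{\Gamma}^{c}$ is closed under powers and inverses (Lemma~\ref{lem:description-of-S-and-F}\eqref{item:description-S}), it is impossible that \emph{both} $h_{m}$ and $h_{m+1}$ lie in $S_{\Gamma}^{c}$. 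A short case analysis then yields $\wlen{h_{m}^{r_{m-1}}}\ge r_{m-1}$ without ever needing $h_{m}\in S_{\Gamma}$ individually. Second, condition~(ii) is secured via \emph{closed} sets: with $F_{n}=\{f:f^{n}=e\}$ one has $g_{m}\notin F_{r_{m-1}}$ (since $F_{r_{m-1}}\subseteq S_{\Gamma}^{c}\subseteq S_{\Gamma}^{c}\cdot S_{\Gamma}^{c}$), so $d(g_{m},F_{r_{m-1}})>0$, and by arranging $d(\widetilde{h}_{n+1,m},\widetilde{h}_{n,m})<2^{-n}d(g_{m},F_{r_{m-1}})$ one gets $d(h_{m},g_{m})<d(g_{m},F_{r_{m-1}})$, hence $h_{m}\notin F_{r_{m-1}}$ and $\wlen{h_{m}^{r_{m-1}}}\ge\wlen{h_{m}}$ by Lemma~\ref{lem:description-of-S-and-F}\eqref{item:when-power-is-not-shorter}. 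The point is that $F_{r_{m-1}}$ is closed in \emph{any} group topology, so avoiding it survives the limit; the set $S_{\Gamma}^{c}\cdot S_{\Gamma}^{c}$ enjoys no such property.
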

\begin{proof}
  Suppose the statement is false and \( \Gamma \) is equipped with a non-discrete completely metrizable group topology.
  Let \( F_{n} = \{ f \in \Gamma : f^{n} = e\} \), and therefore \( F_{\Gamma} = \bigcup_{n} F_{n} \), where each of
  \( F_{n} \) is closed.  Recall that \( F_{\Gamma} \subseteq S^{c}_{\Gamma} \).

  Fix a complete metric \( d \) on \( \Gamma \).  We assume that \( d \le 1 \).  As in the original construction of
  Dudley we build a sequence \( g_{n} \in \Gamma \) such that for \( r_{n} = n + \sum_{k=1}^{n}\wlen{g_{k}} \) and
  \[ \widetilde{h}_{n,m} = g_{m}(g_{m+1}( \cdots (g_{n-1}(g_{n})^{r_{n-1}})^{r_{n-2}} \cdots)^{r_{m+1}})^{r_{m}}, \quad
  1 \le m \le n, \] we have
  \begin{enumerate}[(i)]
  \item \( g_{n} \not \in S^{c}_{\Gamma} \cdot S^{c}_{\Gamma} \);
  \item for all \( m \ge 2 \)
    \[ d(\widetilde{h}_{n+1,m},\widetilde{h}_{n,m}) < 2^{-n}d(g_{m},F_{r_{m-1}}), \]
    and \( d(\widetilde{h}_{n+1,1},\widetilde{h}_{n,1}) < 2^{-n} \).
  \end{enumerate}
  Such a sequence is constructed by induction on \( n \).  For the base of induction choose any
  \( g_{1} \not \in S^{c}_{\Gamma} \cdot S^{c}_{\Gamma} \).  For the induction step note that for each
  \( 1 \le m \le n \) the map
  \[ \xi_{m}(x) = g_{m}(g_{m+1}( \cdots (g_{n-1}(g_{n} \cdot x^{r_{n}})^{r_{n-1}})^{r_{n-2}}
  \cdots)^{r_{m+1}})^{r_{m}}. \]
  is continuous and \( \xi_{m}(e) = \widetilde{h}_{n,m} \).  Therefore there exists an open neighbourhood of the identity
  \( U \) such that for all \( g \in U \) and all \( 2 \le m \le n \)
  \[ d(\xi_{m}(g),\widetilde{h}_{n,m}) < 2^{-n} d(g_{m},F_{r_{m-1}}) \]
  and \( d(\xi_{1}(g),\widetilde{h}_{n,1}) < 2^{-n} \).  By Lemma \ref{lem:S-squared-has-empty-interior} the interior of
  \( S^{c}_{\Gamma} \cdot S^{c}_{\Gamma} \) is empty and we choose \( g_{n+1} \in U \) to be any element not in
  \( S^{c}_{\Gamma} \cdot S^{c}_{\Gamma} \).  This finishes the step of induction.

  For each \( m \) the resulting sequence \( \widetilde{h}_{n,m} \) is Cauchy, so we may define
  \( h_{m} = \lim_{n \to \infty} \widetilde{h}_{n,m} \).  Also by the definition of \( \widetilde{h}_{n,m} \) we have
  \( \widetilde{h}_{n,m} = g_{m}\widetilde{h}_{n,m+1}^{r_{m}} \) and therefore also \( h_{m} = g_{m}h_{m+1}^{r_{m}} \).
  Finally \( d(\widetilde{h}_{n+1,m},\widetilde{h}_{n,m}) < 2^{-n}d(g_{m}, F_{r_{m-1}}) \) implies
  \[ d(h_{m},g_{m}) < \sum_{n=m}^{\infty} 2^{-n}d(g_{m},F_{r_{m-1}}) \le d(g_{m},F_{r_{m-1}}), \]
  and therefore \( h_{m} \not \in F_{r_{m-1}} \), whence by item \eqref{item:when-power-is-not-shorter} of Lemma
  \ref{lem:description-of-S-and-F} we see that \( \wlen{h_{m}^{r_{m-1}}} \ge \wlen{h_{m}} \) holds for all \( m \ge 2
  \).

  \medskip

  We claim that \( \wlen{h_{m}^{r_{m-1}}} \ge r_{m-1} \) for all \( m \ge 2 \).  Indeed, if
  \( h_{m} \in S_{\Gamma} \), then \( \wlen{h_{m}^{r_{m-1}}} \ge r_{m-1} \) holds by the definition of \( S_{\Gamma} \).
  If \( h_{m} \in S^{c}_{\Gamma} \), but \( h_{m+1} \in S_{\Gamma} \), then
  \[ \wlen{h_{m}^{r_{m-1}}} \ge \wlen{h_{m}} = \wlen{g_{m}h_{m+1}^{r_{m}}} \ge \wlen{h_{m+1}^{r_{m}}} - \wlen{g_{m}} \ge
  r_{m} - \wlen{g_{m}} = r_{m-1} + 1 > r_{m-1}. \]
  Finally if \( h_{m} \in S^{c}_{\Gamma} \) and \( h_{m+1} \in S^{c}_{\Gamma} \), then by item \eqref{item:description-S}
  of Lemma \ref{lem:description-of-S-and-F} \( h_{m+1}^{-r_{m}} \in S^{c}_{\Gamma} \) and therefore
  \( g_{m} = h_{m}h_{m+1}^{-r_{m}} \in S^{c}_{\Gamma}\cdot S^{c}_{\Gamma} \) contradicting the choice of \( g_{m} \).

  \medskip

  We are now ready to arrive at a contradiction by showing that \( \wlen{h_{1}} \) is unbounded:
  \begin{displaymath}
    \begin{aligned}
      \wlen{h_{1}} &= \wlen{g_{1}h_{2}^{r_{1}}} \ge \wlen{h_{2}^{r_{1}}} - \wlen{g_{1}} \ge \wlen{h_{2}} - \wlen{g_{1}}\\
      &= \wlen{g_{2}h_{3}^{r_{2}}} - \wlen{g_{1}} \ge \wlen{h_{3}^{r_{2}}} - \wlen{g_{1}} - \wlen{g_{2}} \ge
      \wlen{h_{3}} - \wlen{g_{1}} -
      \wlen{g_{2}}\\[9pt]
      &=\quad \cdots \\
      &= \wlen{g_{m}h_{m+1}^{r_{m}}} - \sum_{l=1}^{m-1} \wlen{g_{l}} \ge \wlen{h_{m+1}^{r_{m}}} -
      \sum_{l=1}^{m}\wlen{g_{l}} \ge r_{m} - \sum_{l=1}^{m}\wlen{g_{l}} = m.
    \end{aligned}
  \end{displaymath}
  And this is true for all \( m \), which is absurd.
\end{proof}

\begin{corollary}
  \label{cor:completely-metrizable-free-products-discrete-general}
  Let \( \Gamma = \freeprod \), where \( A \) is any set of cardinality at least
  two.  Any completely metrizable group topology on \( \Gamma \) is discrete.
\end{corollary}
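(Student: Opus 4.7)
The plan is to reduce the general case to the two-factor case already handled by Theorem \ref{thm:free-products-are-not-completely-metrizable}. The key observation is that a free product over an arbitrary index set can always be regrouped as a free product of two (non-trivial) factors, and this regrouping does not alter the underlying group or its topology.

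Concretely, suppose we are given a completely metrizable group topology on $\Gamma = \freeprod$ with $\card{A} \ge 2$. I would fix some $a_{0} \in A$ and set
\[ H = \Asterisk_{a \in A \setminus \{a_{0}\}} \Gamma_{a}, \]
so that $\Gamma$ is canonically isomorphic, as an abstract group, to the two-factor free product $\Gamma_{a_{0}} \freepr H$. Since each $\Gamma_{a}$ is non-trivial and $\card{A} \ge 2$, both $\Gamma_{a_{0}}$ and $H$ are non-trivial, which is the hypothesis required for Theorem \ref{thm:free-products-are-not-completely-metrizable}. The topology we are considering on $\Gamma$ is simultaneously a completely metrizable group topology on $\Gamma_{a_{0}} \freepr H$, since the two group structures coincide. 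Theorem \ref{thm:free-products-are-not-completely-metrizable} then immediately yields that the topology is discrete.

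There is no real obstacle here beyond unwrapping the definitions: the non-trivial work is contained in Theorem \ref{thm:free-products-are-not-completely-metrizable}, and the only thing to check is that the hypothesis "exactly two non-trivial factors" used implicitly throughout Section \ref{sec:free-prod-discrete} is satisfied by the regrouped decomposition, which is clear. In particular, one does not need to re-examine the word length function on $\Gamma$ associated with the full decomposition $\freeprod$, since the conclusion of Theorem \ref{thm:free-products-are-not-completely-metrizable} is purely topological.
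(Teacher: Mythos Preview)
Your argument is correct and is essentially identical to the paper's own proof, which also singles out one index \(b \in A\) and writes \(\freeprod = \Gamma_{b} \freepr \bigl(\fp{a}{A\setminus\{b\}}{\Gamma}\bigr)\) before invoking Theorem~\ref{thm:free-products-are-not-completely-metrizable}. There is nothing to add.
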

\begin{proof}
  This is immediate from the above, since
  \( \freeprod = \Gamma_{b} \freepr \Bigl(\fp{a}{A\setminus\{b\}}{\Gamma}\Bigr) \).
\end{proof}

\begin{corollary}
  \label{cor:closed-kernel-implies-continuity}
  Let \( \phi : G \to \Gamma \) be a surjective homomorphism from a completely metrizable topological group \( G \) onto
  a discrete free product \( \Gamma = \freeprod \) for some index set \( \card{A} \ge 2 \).  If the kernel \( K \) of
  \( \phi \) is closed in \( G \), then \( \phi \) is continuous.
\end{corollary}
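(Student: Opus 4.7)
The plan is to transport the topology of $G$ to $\Gamma$ via $\phi$ and then invoke Corollary \ref{cor:completely-metrizable-free-products-discrete-general}. Because $K$ is a closed normal subgroup of the completely metrizable topological group $G$, the quotient $G/K$ equipped with the quotient topology is Hausdorff. The quotient map $\pi : G \to G/K$ is open, so a countable neighbourhood basis of the identity in $G$ descends to one in $G/K$; thus $G/K$ is first countable, hence metrizable by the Birkhoff--Kakutani theorem. A classical theorem in the theory of topological groups further ensures that the quotient of a completely metrizable group by a closed normal subgroup remains completely metrizable, so $G/K$ is itself a completely metrizable topological group.

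Surjectivity of $\phi$ together with $\ker\phi = K$ yields a group isomorphism $\bar{\phi} : G/K \to \Gamma$ with $\phi = \bar{\phi} \circ \pi$. Transferring the topology of $G/K$ via $\bar{\phi}$ endows $\Gamma$ with a completely metrizable group topology; concretely, a set $U \subseteq \Gamma$ is declared open if and only if $\phi^{-1}(U)$ is open in $G$. Corollary \ref{cor:completely-metrizable-free-products-discrete-general} then forces this topology on $\Gamma$ to be discrete. In particular, $\{e\} \subseteq \Gamma$ is open, so $K = \phi^{-1}(\{e\})$ is open in $G$. Since a group homomorphism between topological groups is continuous as soon as the preimage of some neighbourhood of the identity is a neighbourhood of the identity in the source, the openness of $K$ in $G$ shows that $\phi$ is continuous with respect to the (original) discrete topology on $\Gamma$.

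The only nontrivial ingredient is the standard fact that $G/K$ with the quotient topology is completely metrizable whenever $G$ is completely metrizable and $K$ is closed and normal; after that, the argument is merely transport of structure followed by a direct appeal to the already-established discreteness corollary.
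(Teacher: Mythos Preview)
Your proof is correct and is essentially the same as the paper's: both form the quotient $G/K$, invoke the standard fact that it is completely metrizable, transport its topology to $\Gamma$ via the induced isomorphism, and apply Corollary~\ref{cor:completely-metrizable-free-products-discrete-general} to conclude discreteness. The only cosmetic difference is that the paper phrases the conclusion as ``$G/K$ is discrete, hence $\psi$ and $\phi=\psi\circ\pi$ are continuous,'' whereas you phrase it as ``$K$ is open, hence $\phi$ is continuous''; these are the same statement.
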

\begin{proof}
  Suppose that the kernel \( K \) is a closed normal subgroup of \( G \) and let \( \pi : G \to G/K \) be the canonical
  projection.  The surjective homomorphism \( \phi \) induces the isomorphism \( \psi : G/K \to \Gamma \).  Since the
  group \( G/K \) being a quotient of a completely metrizable group by a closed normal subgroup is itself completely
  metrizable in the quotient topology (see \cite[Theorem 5.2.2]{MR1425877} and \cite[Section 8D]{MR1321597}), Corollary
  \ref{cor:completely-metrizable-free-products-discrete-general} implies \( G/K \) is discrete.  Therefore
  \( \psi : G/K \to \Gamma \) is continuous, and hence so is \( \phi = \psi \circ \pi \).
\end{proof}

\section{More calculations in the free products}
\label{sec:more-calc-free}

To motivate some of the calculations in this section let us discuss the strategy of proving our main result Theorem
\ref{thm:continuity-into-free-products}.  Based on Corollary \ref{cor:closed-kernel-implies-continuity} and some
additional arguments we shall find ourselves in the situation of having a surjective homomorphism
\( \phi : G \to \Gamma \) from a completely metrizable group into a free product that has a \emph{dense} kernel.
Density of the kernel gives a great power.  Recall that in the argument of Dudley we could choose the sequence
\( g_{m} \) from any dense set.  In particular, for such a \( \phi \) the sequence \( \phi(g_{m}) \) can be literally
any sequence we want.  For instance, we shall find it convenient to have \( \phi(g_{m}) = \gamma_{0} \in \Gamma \) for a
concrete \( \gamma_{0} \in \Gamma \).  To obtain a contradiction we shall need to show that
\( \phi(h_{m}) \not \in S^{c}_{\Gamma} \), where \( h_{m} \) is constructed as usually.  Since elements \( h_{m} \) are
obtained via the limit procedure, we shall have little control of them.  The only piece of information that will
be at our hands is the equality \( h_{m} = g_{m} h_{m+1}^{r_{m}} \), and therefore also
\( \phi(h_{m}) = \gamma_{0} \phi(h_{m+1})^{r_{m}} \).  If we choose
\( \gamma_{0} \not \in S^{c}_{\Gamma} \cdot S^{c}_{\Gamma} \), then either \( \phi(h_{m}) \) or \( \phi(h_{m+1}) \) has
to be in \( S_{\Gamma} \), but we need this to be the case for both of them.  The somewhat tedious calculations in this
section show that under some mild assumptions on \( \gamma_{0} \) and \( r_{m} \),
\( \gamma_{0}\lambda^{r_{m}} \in S^{c}_{\Gamma} \) implies \( \lambda \in S^{c}_{\Gamma} \) and this will be enough for
our purposes.

In this section the group \( \Gamma = \Gamma_{1} \freepr \Gamma_{2} \) is also assumed to have exactly two factors
unless stated otherwise.  The role of the groups \( \Gamma_{1} \) and \( \Gamma_{2} \) will not be symmetric.  As we
have noted earlier, elements of order two cause some troubles, and we shall use two different arguments: when both
\( \Gamma_{1} \) and \( \Gamma_{2} \) ``essentially'' consist  of elements of order two, and when one of \( \Gamma_{1}
\), \( \Gamma_{2} \) has ``many'' elements of order bigger than two.  Results of this section are needed for the latter
and will be applied for \( \Gamma_{1} \) being a ``good'' group in this sense.

\begin{definition}
  \label{def:balanced-word}
  Let \( \gamma \) be an element in \( \Gamma_{1} \freepr \Gamma_{2} \) with the reduced form
  \( \gamma = x_{1}x_{2}\cdots x_{n} \).  If \( n \ge 2 \), then \( \gamma \) has letters from both \( \Gamma_{1} \) and
  \( \Gamma_{2} \), so let \( i_{1}, \ldots, i_{m} \) be the indices of letters from \( \Gamma_{1} \), i.e., depending
  on whether \( x_{1} \in \Gamma_{1} \) or \( x_{1} \in \Gamma_{2} \), \( i_{k} \)'s are either all the odd or all the
  even letters.  The word \( \gamma \) is called \emph{unbalanced} if \( x_{i_{k}} \ne x_{i_{l}}^{\pm 1} \) for all
  \( k \ne l \) and \( x_{i_{k}} \ne x_{i_{k}}^{-1} \) for all \( k \).  In other words, an element is unbalanced if all
  of its letters from \( \Gamma_{1} \) have order bigger than two, and the word uses neither the same letter from
  \( \Gamma_{1} \) twice nor the letter from \( \Gamma_{1} \) and its inverse.
\end{definition}

\begin{lemma}
  \label{lem:asymmetric-unbalanced-is-not-in-Scomp-squared}
  An unbalanced asymmetric element of length at least \( 6 \) does not belong to \( S^{c}_{\Gamma} \cdot S^{c}_{\Gamma} \).
\end{lemma}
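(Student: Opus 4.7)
The plan is to argue by contradiction using Corollary \ref{cor:for-of-an-asymmetric-element}. Suppose some asymmetric unbalanced $\gamma$ of length at least $6$ lies in $S^c_\Gamma \cdot S^c_\Gamma$. Asymmetry means the first and last letters of $\gamma$ lie in different factors of $\Gamma_1 \freepr \Gamma_2$ and so are not multipliable, and the corollary then forces the reduced form of $\gamma$ to be
\[
\gamma = \mu z_0 \mu^{-1} \nu z_1 \nu^{-1}
\]
for some $\mu, \nu \in \Gamma$ and $z_0, z_1 \in X_\Gamma \setminus \{e\}$. The whole argument will consist of showing that no such decomposition can coexist with the unbalancedness of $\gamma$ and the length bound $\wlen{\gamma} \ge 6$.

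The key observation is that no letter of $\mu$ or $\nu$ may lie in $\Gamma_1$. If a letter $u$ in the reduced form of $\mu$ were in $\Gamma_1$, then $u$ would appear in the $\mu$ block and $u^{-1}$ in the $\mu^{-1}$ block of the reduced form of $\gamma$, contributing two $\Gamma_1$-letters. Unbalancedness forbids self-inverse $\Gamma_1$-letters, so $u \ne u^{-1}$, and then $u$ and $u^{-1}$ are two distinct $\Gamma_1$-letters of $\gamma$ that are inverses of each other, again contradicting Definition \ref{def:balanced-word}. The same reasoning rules out $\Gamma_1$-letters in $\nu$, so every letter of $\mu$ and $\nu$ sits in $\Gamma_2$.

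From this I would extract the length bound as follows. Since $\mu$ must itself be reduced (otherwise the displayed expression is not the reduced form of $\gamma$) and since all its letters lie in the single factor $\Gamma_2$, any two adjacent letters would be multipliable; hence $\wlen{\mu} \le 1$, and similarly $\wlen{\nu} \le 1$. Consequently $\wlen{\gamma} = 2\wlen{\mu} + 2\wlen{\nu} + 2 \le 6$, and combining with the hypothesis $\wlen{\gamma} \ge 6$ forces $\wlen{\mu} = \wlen{\nu} = 1$, say $\mu = u$ and $\nu = v$ with $u, v \in \Gamma_2 \setminus \{e\}$. The reduced form of $\gamma$ would then read $u z_0 u^{-1} v z_1 v^{-1}$; but the middle letters $u^{-1}$ and $v$ both lie in $\Gamma_2$ and are therefore multipliable, so this expression is not reduced, giving the desired contradiction. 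I do not anticipate any substantial obstacle beyond careful bookkeeping of which factor each letter lies in; the conceptual point to notice is that the contradiction is produced not at the outer endpoints of $\gamma$ but at the inner junction $\mu^{-1}\nu$ forced by the length lower bound.
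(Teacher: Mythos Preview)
Your proof is correct and follows the same approach as the paper: both invoke Corollary~\ref{cor:for-of-an-asymmetric-element} and then use unbalancedness to force every letter of $\mu$ and $\nu$ into $\Gamma_2$. The paper finishes more directly by counting $\Gamma_1$-letters (at most two can occur, namely $z_0$ and $z_1$, whereas an asymmetric word of length $\ge 6$ has at least three), which avoids your separate case analysis of the boundary length $\wlen{\gamma}=6$ and the reducibility check at the $\mu^{-1}\nu$ junction.
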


\begin{proof}
  Let \( \gamma \in \Gamma \) be such an element.  By Corollary \ref{cor:for-of-an-asymmetric-element} if \( \gamma \in
  S^{c}_{\Gamma} \cdot S^{c}_{\Gamma} \), then the reduced form of \( \gamma \) is of the form \( \mu z_{0} \mu^{-1} \nu
  z_{1} \nu^{-1} \) for some \( \mu, \nu \in \Gamma \) and some \( z_{0}, z_{1} \in X_{\Gamma} \setminus \{e\} \).
  Since \( \gamma \) is unbalanced, both \( \mu \) and \( \nu \) may consist only of letters from \( \Gamma_{2} \).  In
  particular, \( z_{0} \) and \( z_{1} \) are the only possible letters from \( \Gamma_{1} \), but
  \( ||\gamma|| \ge 6 \) and so \( \gamma \) has at least \( 3 \) letters from \( \Gamma_{1} \) in its reduced form.
  This contradiction proves the lemma.
\end{proof}

\begin{lemma}
  \label{lem:balanced-times-power-of-reduced-not-is-in-S}
  Let \( \xi \in \Gamma_{1} \freepr \Gamma_{2} \) be an unbalanced element of asymmetric type and of length
  \( \wlen{\xi} \ge 6 \).  If \( r \ge 4 \) and \( \lambda \in \Gamma_{1} \freepr \Gamma_{2} \) is cyclically reduced of
  length \( \wlen{\lambda} \ge 2 \), then \( \xi \lambda^{r} \not \in S^{c}_{\Gamma} \).
\end{lemma}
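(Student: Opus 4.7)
\emph{Proof plan.} I argue by contradiction: suppose $\xi\lambda^{r} \in S^{c}_{\Gamma}$. By Lemma \ref{lem:description-of-S-and-F}(i), the cyclically reduced core $\gamma_{c}$ of $\xi\lambda^{r}$ (Lemma \ref{lem:core-of-a-word}) has $\wlen{\gamma_{c}} \leq 1$. I will instead show $\wlen{\gamma_{c}} \geq 2$, yielding a contradiction.

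The core is obtained in two stages: first, one reduces the product $\xi \cdot \lambda^{r}$ by $t$ cancellations at the junction (possibly absorbing one combined letter, so that an extra $\epsilon \in \{0,1\}$ letter is lost), yielding a reduced word $w$ of length $L = \wlen{\xi} + \wlen{\lambda^{r}} - 2t - \epsilon$; second, one folds $w$ by iteratively stripping matching inverse pairs from its two ends, say $f$ times, giving $\wlen{\gamma_{c}} = L - 2f$. To apply this I need to understand $\lambda^{r}$: since $\lambda = y_{1}\cdots y_{k}$ is reduced with $y_{1} \neq y_{k}^{-1}$, either (a) $y_{k}$ and $y_{1}$ lie in different factors (equivalently $k$ is even) and $\lambda^{r}$ is the honest periodic repetition of length $rk$; or (b) they lie in the same factor ($k$ odd) and $\lambda^{r}$ reduces to $y_{1}(y_{2}\cdots y_{k-1}u_{1})^{r-1}y_{2}\cdots y_{k}$ of length $r(k-1)+1$, where $u_{1} = y_{k}y_{1} \in X_{\Gamma}\setminus\{e\}$. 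In both cases $\lambda^{r}$ is periodic apart from the occasional letter $u_{1}$.

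Each cancellation imposes $x_{n-j+1} = (\text{$j$th letter of } \lambda^{r})^{-1}$; each fold imposes $x_{j} = (\text{$(M-j+1)$th letter of } \lambda^{r})^{-1}$, where $M = \wlen{\lambda^{r}}$ and $n = \wlen{\xi}$. Because the letters of $\lambda^{r}$ recur periodically and $\xi$ is unbalanced, any $\Gamma_{1}$-position of $\lambda^{r}$ used twice in these assignments would force two distinct $\Gamma_{1}$-letters of $\xi$ to coincide, a contradiction. Cancellation uses the initial $t$ positions of $\lambda^{r}$ and folding uses the terminal $f$; these position-sets overlap modulo the period as soon as $t + f$ exceeds one period, and such an overlap must contain a $\Gamma_{1}$-index whenever it has size at least two. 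This yields the joint bound $t + f \leq k + 1$.

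Combining, with $r \geq 4$, $k \geq 2$, $n \geq 6$, and $\wlen{\lambda^{r}} \geq r(k-1)+1 \geq 4k - 3$:
\[
  \wlen{\gamma_{c}} \ = \ L - 2f \ \geq \ n + \wlen{\lambda^{r}} - 2(t+f) - 1 \ \geq \ 6 + (4k-3) - 2(k+1) - 1 \ = \ 2k \ \geq \ 4,
\]
contradicting $\wlen{\gamma_{c}} \leq 1$. The main technical obstacle is establishing the joint bound $t + f \leq k + 1$ in case (b): one must carefully track residues modulo the almost-period $k-1$ and handle the sporadic combined letter $u_{1}$, which itself may lie in $\Gamma_{1}$ and alters how the $\Gamma_{1}$-letters of $\lambda^{r}$ distribute among positions.
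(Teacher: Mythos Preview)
Your strategy---bounding cancellation and folding jointly via periodicity of $\lambda^{r}$ and unbalancedness of $\xi$---is a genuinely different idea from the paper's exhaustive case analysis on the type of $\lambda$, and if it worked it would be more conceptual. But there is a real gap at the key step, the bound $t+f\le k+1$.

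The fold constraint you write, ``$x_{j}=(\text{$(M{-}j{+}1)$th letter of }\lambda^{r})^{-1}$,'' is only valid while the left end of the reduced word $w$ still lies in $\xi$, i.e.\ for $j\le n-t$. Once those $n-t$ letters are stripped, any further fold pairs two letters of $\lambda^{r}$ against each other and imposes no relation on $\xi$; unbalancedness says nothing about such folds, so your overlap argument cannot bound them. In case~(a) (even $k$) a parity check---which you do not give---shows the $(n{-}t{+}1)$st fold is blocked because $z_{t+1}$ and $z_{M-n+t}$ lie in different factors, so the issue does not arise. But in case~(b) (odd $k$), $M=r(k-1)+1$ is odd while $n$ is even, so those two letters lie in the \emph{same} factor and the fold may proceed; for instance, when $\lambda$ has type $(i,i)$ and $\xi$ has type $(i,j)$ one has $t=0$ and nothing a priori stops the folding from running through all of $\xi$ and deep into $\lambda^{r}$. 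Thus in exactly the case you flag as the ``main technical obstacle,'' your argument does not bound $f$ at all. Even in case~(a), turning ``overlap of size $\ge 2$ modulo the period contains a $\Gamma_{1}$-index'' into the sharp inequality $t+f\le k+1$ still requires tracking which cancellation indices $j$ and folding indices $j'$ actually hit $\Gamma_{1}$-positions of $\xi$ (only half of each do), and checking that the resulting collision lands at two \emph{distinct} positions of $\xi$; this is not carried out. The paper avoids all of this by splitting on the type of $\lambda$ relative to $\xi$ and on whether $\|\lambda\|$ is larger or smaller than $\|\xi\|$, arriving in each branch at an explicit reduced form of $\xi\lambda^{r}$.
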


\begin{proof}
  The proof is by contradiction: suppose the statement is false and we are given elements
  \( \xi, \lambda \in \Gamma_{1} \freepr \Gamma_{2} \) and \( r \) which comply with the assumptions of the theorem, but
  \( \xi \lambda^{r} \in S^{c}_{\Gamma}\).  Therefore \( \xi \lambda^{r} = \beta z \beta^{-1} \) for some
  \( \beta \in \Gamma_{1} \freepr \Gamma_{2} \) and \( z \in \Gamma_{1} \cup \Gamma_{2} \).  A tiresome case analysis
  with numerous subcases will show that this is impossible.  More precisely, we consider four cases depending on the
  relation between the types of \( \xi \) and \( \lambda \) and there will be subcases dealing with different possible
  lengths of \( \lambda \).  Most of the products in this proof will be reduced (except for the product
  \( \xi \lambda^{r} \), the reduced form of which we aim to understand), we shall mention explicitly when a product
  under consideration is not reduced and a dot \( \cdot \) will denote places where reductions may occur.

  In this proof we shall consistently use the following notations.  Symbols \( x \) and \( y \) will denote letters in
  the reduced form of \( \xi \), while symbols with the tilde, e.g., \( \tilde{x} \) and \( \tilde{y} \),
  will denote letters in the reduced form of \( \lambda \).  Symbols \( u \) and \( v \) will denote non-trivial
  products of two non-trivial multipliable letters.  A typical contradiction will be obtained by showing that such a
  product is equal to one of its factors.  For example, for \( u = x \cdot \tilde{x} \) with  \( x, \tilde{x} \ne e \)
  we shall sometimes get \( u = \tilde{x} \), which is clearly impossible.

  Let \( \xi \lambda^{r} = \beta z \beta^{-1} \) and so, roughly speaking, the first half of the letters in the reduced
  form of \( \xi \lambda^{r} \) are just the inverses of the corresponding letters in the second half.  We analyse the
  product \( \xi \lambda^{r} \) and show that it cannot have the form \( \beta z \beta^{-1} \).  To emphasise at which
  parts of the reduced form of \( \xi \lambda^{r} \) we look in the concrete argument, we under-bracket the relevant
  pieces.  By assumption \( \xi \) has type \( (i,j) \) with \( i \ne j \).

  \bigskip \emph{Case: \( \xi \) is of type \( (i,j) \) and \( \lambda \) has type \( (j,i) \).}  In this case
  \( \lambda \) is of asymmetric type, and the reduced form of \( \lambda^{r} \) is just the concatenation of \( r \)
  copies of the reduced form of \( \lambda \).  Note also that the length of \( \lambda \) is even.

  \medskip

  \emph{Subcase: \( \wlen{\lambda} \ge \wlen{\xi} + 2 \).}  If \( \lambda = \xi^{-1}\lambda_{1} \) for some
  \( \lambda_{1} \in \Gamma \), then
  \[ \xi\lambda^{r} = \symform{\lpart{\lambda_{1}\xi^{-1}}\lambda_{1}\xi^{-1}\lambda_{1} \cdots
    \rpart{\xi^{-1}\lambda_{1}}}, \]
  and the product is reduced (we shall not mention this in the future). Therefore for \( \xi \lambda^{r} \) to have the
  form \( \beta z \beta^{-1} \) we must have \( \lambda_{1}^{-1} = \lambda_{1} \) and \( \xi^{-1} = \xi \), whence
  \( \xi \) has a symmetric type contrary to the original assumption.

  If \( \lambda \ne \xi^{-1}\lambda_{1} \), then there is a unique way to decompose \( \xi \) as \( \xi_{1} x \xi_{2} \)
  and \( \lambda \) as \( \xi_{2}^{-1} \tilde{x} \lambda_{1} \) for some \( \xi_{1}, \xi_{2}, \lambda_{1} \in \Gamma \)
  and some \( x, \tilde{x} \in X_{\Gamma} \), where \( x, \tilde{x} \ne e \), \( x \) and \( \tilde{x} \) are
  multipliable, \( \wlen{\lambda_{1}} \ge \wlen{\xi_{1}} + 2 \) and \( x \cdot \tilde{x} \ne e \).  Note that we allow
  for \( \xi_{1} \) and \( \xi_{2} \) to be empty.  So
  \[ \xi\lambda^{r} = \symform{\lpart{\xi_{1}u}\lambda_{1}\xi_{2}^{-1}\tilde{x}\lambda_{1} \cdots
    \xi_{2}^{-1}\tilde{x}\rpart{\lambda_{1}}},\quad u= x\cdot \tilde{x}. \]
  This implies \( \lambda_{1} = \lambda_{2}u^{-1}\xi_{1}^{-1} \), \( \wlen{\lambda_{2}} \ge 1 \) and thus
  \[ \xi\lambda^{r} = \symform{\xi_{1}u \lpart{\lambda_{2}u^{-1}} \xi_{1}^{-1 }\xi_{2}^{-1} \tilde{x}\lambda_{2} u^{-1}
    \xi_{1}^{-1} \cdots \xi_{2}^{-1}\tilde{x}\lambda_{2} u^{-1} \xi_{1}^{-1} \xi_{2}^{-1} \rpart{\tilde{x}\lambda_{2}} u^{-1}
    \xi_{1}^{-1}}. \]
  So \( \lambda_{2} = \lambda_{2}^{-1} \) and \( u = \tilde{x} \).  But \( u = x \cdot \tilde{x} \), hence \( x = e \),
  which is impossible.

  \medskip

  \emph{Subcase: \( ||\xi|| = ||\lambda|| \).}  If \( \lambda = \xi^{-1} \), then \( \xi \lambda^{r} = \lambda^{r-1} \)
  has asymmetric type and \( \xi \lambda^{r} \not \in S^{c}_{\Gamma} \).  So we may assume that
  \( \xi = \xi_{1} x \xi_{2} \) and \( \lambda = \xi_{2}^{-1}\tilde{x} \lambda_{1} \), \( x \) and \( \tilde{x} \) are
  multipliable letters, \( x\cdot \tilde{x} \ne e \) and \( ||\lambda_{1}|| = ||\xi_{1}|| \).  Similarly to the previous
  subcase one sees that \( \lambda_{1} = \xi_{1}^{-1} \) and so
  \[ \xi \lambda^{r} = \symform{\xi_{1} \lpart{u \xi_{1}^{-1}\xi_{2}^{-1}}\tilde{x}\xi_{1}^{-1} \cdots
  \xi_{2}^{-1}\tilde{x} \rpart{\xi_{1}^{-1}\xi_{2}^{-1}\tilde{x}}\xi_{1}^{-1}}, \quad u = x \cdot \tilde{x}.  \]
  Thus \( u^{-1} = \tilde{x} \) and \( \xi_{2}\xi_{1} = \xi_{1}^{-1}\xi_{2}^{-1} \).    But
  the latter is impossible, since \( ||\xi_{2}\xi_{1}|| = ||\xi|| - 1 \ge 5 \) implies there is a letter from
  \( \Gamma_{1} \) in \( \xi_{2}\xi_{1} \) that is also a letter in the reduced form of \( \xi_{1}^{-1}\xi_{2}^{-1} \).
  This contradicts the assumption that \( \xi \) is unbalanced.

  \medskip

  \emph{Subcase: \( 4 \le | |\lambda|| \le ||\xi|| -2 \).}

  \emph{Subsubcase: \( \xi = \xi_{1} x \lambda^{-1} \), \( \lambda = \tilde{x}\lambda_{1} \)
    and \( x \cdot \tilde{x} \ne e\).}  In this situation \( ||\xi_{1}|| \ge 1 \), \( ||\lambda_{1}|| \ge 3 \) and
  \[ \xi \lambda^{r} = \symform{\lpart{\xi_{1}}u \lambda_{1}\tilde{x}\lambda_{1} \cdots \tilde{x}\rpart{\lambda_{1}}},
  \quad u = x \cdot \tilde{x}. \]
  If either \( ||\xi_{1}|| \ge 2 \) or \( ||\xi_{1}||=1 \) and
  \( \xi_{1} \in \Gamma_{1} \), then \( \xi_{1} \) contains a letter from \( \Gamma_{1} \) with its inverse being a
  letter in \( \lambda_{1} \), but \( \xi = \xi_{1}x\lambda_{1}^{-1}\tilde{x}^{-1} \), and in particular \( \xi \)
  contains two copies of the same letter from \( \Gamma_{1} \) contradicting the assumption that \( \xi \) is
  unbalanced.  Therefore \( ||\xi_{1}|| = 1 \), \( \xi_{1} \in \Gamma_{2} \) and thus
  \( \lambda_{1} = \lambda_{2}u^{-1}\xi_{1}^{-1} \) with \( ||\lambda_{2}|| \ge 1\),
  \[ \xi \lambda^{r} = \symform{\xi_{1}u \lpart{\lambda_{2} u^{-1}} \xi_{1}^{-1} \tilde{x}\lambda_{2}u^{-1}\xi_{1}^{-1}
    \cdots \rpart{\tilde{x}\lambda_{2}}u^{-1}\xi_{1}^{-1}}. \]
  Therefore \( \lambda_{2}^{-1} = \lambda_{2} \) and \( u = \tilde{x} \).  But \( u = x \cdot \tilde{x} \), hence \( x =
  e \), which is impossible.

  \emph{Subsubcase: \( \xi = \xi_{1} x \lambda^{-1} \), \( \lambda = \tilde{x}\lambda_{1} \)
    and \( x \cdot \tilde{x} = e\).}  Letters \( x \) and \( \tilde{x} \) cannot belong to \( \Gamma_{1} \), since this
  would contradict \( \xi \) being unbalanced, so \( x = \tilde{x}^{-1} \in \Gamma_{2} \).  Let
  \( \xi_{1} = \xi_{2}y \), \( \lambda = x^{-1}\tilde{y}\lambda_{2} \) and we must have \( y\cdot \tilde{y} \ne e \)
  (again, because \( \xi \) is unbalanced), \( ||\xi_{2}|| \ge 0 \),
  \[ \xi\lambda^{r} = \symform{\lpart{\xi_{2}}v \lambda_{2}x^{-1}\tilde{y}\lambda_{2} \cdots
    x^{-1}\tilde{y}\rpart{\lambda_{2}}}, \quad v = y\cdot \tilde{y}.  \]
  Note that \( ||\lambda|| \ge 4 \) implies \( ||\lambda_{2}|| \ge 2 \).  If \( ||\xi_{2}|| \ge 2 \), then there exists
  a letter in \( \xi_{2} \) from \( \Gamma_{1} \) with its inverse being a letter in \( \lambda_{2} \).  So
  \( \xi = \xi_{2} y x \lambda_{2}^{-1} \tilde{y}^{-1}x \) has two copies of a letter from \( \Gamma_{1} \), which
  is impossible.  Thus \( ||\xi_{2}|| < 2 \), but \( ||\xi_{2}|| \) is even, and therefore \( ||\xi_{2}|| = 0 \), in
  other words
  \[ \xi\lambda^{r} = \symform{\lpart{v} \lambda_{2}x^{-1}\tilde{y}\lambda_{2} \cdots x^{-1}\tilde{y}\rpart{\lambda_{2}}}. \]
  So \( \lambda_{2} = \lambda_{3} v^{-1} \),
  \[ \xi\lambda^{r} = \symform{v \lpart{\lambda_{3} v^{-1 }}x^{-1}\tilde{y}\lambda_{3} v^{-1} \cdots
    x^{-1}\rpart{\tilde{y} \lambda_{3}}v^{-1}}, \]
  whence \( \lambda_{3}^{-1} = \lambda_{3} \) and \( v= \tilde{y} \).  But \( v = y \cdot \tilde{y} \), implying \( y =
  e \), which is impossible.

  \medskip

  Therefore the reduced form of \( \xi \) cannot be written in the form \( \xi = \xi_{1} x \lambda^{-1} \), and we have
  one subsubcase left.

  \emph{Subsubcase: \( \xi \) cannot be written as \( \xi_{1} x \lambda^{-1} \).}  In this subsubcase we can
  write \( \xi = \xi_{1} x \xi_{2} \) and \( \lambda = \xi_{2}^{-1} \tilde{x}\lambda_{1} \), for some
  \( \xi_{1}, \xi_{2}, \lambda_{1} \in \Gamma \) and some \( x, \tilde{x} \in X_{\Gamma} \setminus\{e\} \),
  \( x \cdot \tilde{x} \ne e \), \( ||\xi_{1}|| \ge ||\lambda_{1}|| + 2 \),
  \[ \xi \lambda^{r} = \symform{\lpart{\xi_{1}}u \lambda_{1}\xi_{2}^{-1}\tilde{x}\lambda_{1} \cdots
    \xi_{2}^{-1}\rpart{\tilde{x}\lambda_{1}}}, \quad u = x \cdot \tilde{x}.  \]
  Therefore \( \xi_{1} = \lambda_{1}^{-1}\tilde{x}^{-1}\xi_{3} \), \( ||\xi_{3}|| \ge 1 \),
  \[ \xi \lambda^{r} = \symform{\lambda_{1}^{-1} \tilde{x}^{-1} \lpart{\xi_{3}} u \lambda_{1} \xi_{2}^{-1}\tilde{x}
    \lambda_{1} \cdots \xi_{2}^{-1}\tilde{x} \rpart{\lambda_{1} \xi_{2}^{-1}} \tilde{x} \lambda_{1}}.\]
  Similarly to the earlier cases, since \( \xi = \lambda_{1}^{-1} \tilde{x}^{-1} \xi_{3} x \xi_{2} \) is unbalanced and
  \( ||\lambda_{1}\xi_{2}^{-1}|| \ge 3 \) (because \( ||\lambda|| \ge 4 \)) we must have \( ||\xi_{3}|| = 1 \) and
  \( \xi_{3} \in \Gamma_{2} \).  So \( \lambda_{1}\xi_{2}^{-1} = \zeta u^{-1} \xi_{3}^{-1} \) for some
  \( \zeta \in \Gamma \), \( ||\zeta|| \ge 1 \) and
  \[ \xi \lambda^{r} = \symform{\lambda_{1}^{-1} \tilde{x}^{-1} \xi_{3} u \lpart{\zeta u^{-1}} \xi_{3}^{-1 } \tilde{x}
    \cdots \rpart{\tilde{x} \zeta} u^{-1} \xi_{3}^{-1 } \tilde{x} \lambda_{1}}.\]
  Finally we get \( \zeta = \zeta^{-1} \) and \( u = \tilde{x} \), contradicting, as before,  \( x \ne e \).

  \medskip

  \emph{Subcase: \( ||\lambda|| = 2 \)}.
  
  \emph{Subsubcase: \( \xi = \xi_{1}x\lambda^{-1} \), \( \lambda = \tilde{x}\tilde{y} \), \( x \cdot \tilde{x} \ne e
    \).}  Here \( ||\xi_{1}|| \ge 3 \) and
  \[ \xi \lambda^{r} = \symform{\lpart{\xi_{1}} u \tilde{y} \tilde{x} \tilde{y} \cdots \rpart{\tilde{x}\tilde{y}
    \tilde{x}\tilde{y}}},\quad u = x \cdot \tilde{x}.  \]
  If \( ||\xi_{1}|| \ge 4 \) or if \( \tilde{y} \in \Gamma_{1} \), then \( \xi_{1} \) contains two copies of the same
  letter from \( \Gamma_{1} \), which is impossible by assumption.  So
  \( \xi_{1} = \tilde{y}^{-1} \tilde{x}^{-1} \tilde{y}^{-1} \) and \( \tilde{y} \in \Gamma_{2} \).
  Therefore \( \xi = \tilde{y}^{-1} \tilde{x}^{-1} \tilde{y}^{-1} x \tilde{y}^{-1} \tilde{x}^{-1} \) and such an element
  is not unbalanced.

  \emph{Subsubcase: \( \xi = \xi_{1}x\lambda^{-1} \), \( \lambda = \tilde{x}\tilde{y} \), \( x \cdot \tilde{x} = e \).}
  For an unbalanced \( \xi \) the equality \( x \cdot \tilde{x} = e \) is only possible when
  \( x, \tilde{x} \in \Gamma_{2} \) and so \( \xi = \xi_{2} y x \lambda^{-1}\), \( \lambda = x^{-1}\tilde{y} \) and
  \( y \cdot \tilde{y} \ne e \), because \( \xi \) is unbalanced.  So
  \[ \xi \lambda^{r} = \symform{\lpart{\xi_{2}} v x^{-1} \tilde{y} \cdots x^{-1} \tilde{y} x^{-1} \rpart{\tilde{y}
      x^{-1} \tilde{y}}}, \quad v = y \cdot \tilde{y}, \]
  and \( \tilde{y} \in \Gamma_{1} \).  Similarly to the previous subsubcase we get \( ||\xi_{2}|| = 2 \), and
  \( \xi_{2} = \tilde{y}^{-1} x\), and therefore
  \( \xi = \tilde{y}^{-1} x y x \tilde{y}^{-1} x \), which again contradicts the assumption that \( \xi \) is
  unbalanced.

  \emph{Subsubcase: \( \xi \ne \xi_{1}\lambda^{-1} \).}  We can write \( \xi = \xi_{1}x\xi_{2} \) and
  \( \lambda = \xi_{2}^{-1} \tilde{x} \lambda_{1} \), where \( x \cdot \tilde{x} \ne e \), \( ||\xi_{2}|| = 0 \) or
  \( ||\xi_{2}|| = 1 \), \( ||\lambda_{1}|| = 1 \) or \( ||\lambda_{1}||=0 \), and \( ||\xi_{1}|| \ge 4 \).  Therefore
  \[ \xi \lambda^{r} = \symform{\lpart{\xi_{1}} u \lambda_{1} \xi_{2}^{-1} \tilde{x} \lambda_{1} \cdots
    \rpart{\xi_{2}^{-1} \tilde{x} \lambda_{1} \xi_{2}^{-1} \tilde{x} \lambda_{1}}}.  \]
  \( ||\xi_{1}|| \ge 4 \) implies that \( \xi \) starts with \( \lambda^{-1}\lambda^{-1} \), which is impossible.  This
  finishes the first case.

  \bigskip
  
  \emph{Case: \( \xi \) is of type \( (i,j) \) and \( \lambda \) is of type \( (i,i) \)}.  In this case there are no
  cancellations between \( \xi \) and \( \lambda \), the length of \( \lambda \) is odd, and if
  \( \lambda = \tilde{x}_{1} \lambda_{1} \tilde{x}_{2} \), then the product
  \[ \lambda^{r} = \tilde{x}_{1} \lambda_{1} u \lambda_{1}u \cdots u \lambda_{1}\tilde{x}_{2}, \quad u = \tilde{x}_{2}
  \cdot \tilde{x}_{1} \in \Gamma_{i} \setminus \{e\}, \]
  is reduced and \( u \ne e \), because \( \lambda \) is cyclically reduced by assumption.
  
  \medskip

  \emph{Subcase: \( ||\lambda|| \ge ||\xi|| + 3 \).}  Let \( \lambda = \tilde{x}_{1}\lambda_{1}\tilde{x}_{2} \) and
  \[ \xi \lambda^{r} = \symform{\lpart{\xi} \tilde{x}_{1} \lambda_{1} u \lambda_{1} u \cdots u \rpart{\lambda_{1}
      \tilde{x}_{2}}}.  \]
  For \( \xi \lambda^{r} \) to be in \( S^{c}_{\Gamma} \) we must have \( \xi = \tilde{x}_{2}^{-1} \xi_{1} \) and
  \( \lambda_{1} = \lambda_{2} \xi_{1}^{-1} \),
  \[ \xi \lambda^{r} = \symform{\tilde{x}_{2}^{-1} \xi_{1} \lpart{\tilde{x}_{1} \lambda_{2}} \xi_{1}^{-1} u \lambda_{2}
    \xi_{1}^{-1} u \cdots \rpart{u \lambda_{2}} \xi_{1}^{-1} \tilde{x}_{2}}.  \]
  Thus \( \lambda_{2} = \lambda_{3} \tilde{x}_{1}^{-1} \), \( \lambda_{3} = \lambda_{3}^{-1} \) and \( u = \tilde{x}_{1}
  \).  But \( u = \tilde{x}_{2} \cdot \tilde{x}_{1} \), hence \( \tilde{x}_{2} = e \), which is impossible.

  \medskip

  \emph{Subcase: \( ||\lambda|| = ||\xi|| + 1 \).}  As in the previous subcase for
  \( \lambda = \tilde{x}_{1} \lambda_{1} \tilde{x}_{2} \), \( u = \tilde{x}_{2} \cdot \tilde{x}_{1} \ne e \) and
  \( \xi = \tilde{x}_{2}^{-1} \xi_{1} \), \( ||\xi_{1}|| = ||\lambda_{1}|| \), we have
  \[ \xi \lambda^{r} = \symform{\tilde{x}_{2}^{-1} \lpart{\xi_{1} \tilde{x}_{1}} \lambda_{1} u \cdots \rpart{u
      \lambda_{1}} \tilde{x}_{2}}.  \] Hence \( \xi_{1} = \lambda_{1}^{-1} \), \( \tilde{x}_{1} = u^{-1} \) and so
  \[ \xi \lambda^{r} = \symform{\tilde{x}_{2}^{-1} \xi_{1} \tilde{x}_{1} \lpart{\xi_{1}^{-1}} \tilde{x}_{1}^{-1} \cdots
    \rpart{\xi_{1}^{-1}} \tilde{x}_{1}^{-1} \xi_{1}^{-1} \tilde{x}_{2}}, \]
  and we arrive at \( \xi_{1} = \xi_{1}^{-1} \), which is impossible since \( \xi \) is unbalanced and
  \( \xi = \tilde{x}^{-1} \xi_{1} \).

  \medskip

  \emph{Subcase: \( 5 \le ||\lambda|| \le ||\xi|| -1 \).}  As before
  \( \lambda = \tilde{x}_{1}\lambda_{1} \tilde{x}_{2} \), \( \xi = \tilde{x}_{2}^{-1} \xi_{1} \) and
  \[ \xi\lambda^{r} = \symform{\tilde{x}_{2}^{-1} \lpart{\xi_{1}} \tilde{x}_{1} \lambda_{1} u \cdots \rpart{u \lambda_{1}}
    \tilde{x}_{2}}.  \]
  From \( ||\lambda|| \le ||\xi|| - 1 \) we deduce \( \xi_{1} = \lambda_{1}^{-1} u^{-1} \xi_{2} \) and therefore
  \[ \xi\lambda^{r} = \symform{\tilde{x}_{2}^{-1} \lambda_{1}^{-1} u^{-1} \lpart{\xi_{2}} \tilde{x}_{1} \lambda_{1} u
    \cdots u \rpart{\lambda_{1}} u \lambda_{1} \tilde{x}_{2}}.  \]
  Since \( ||\lambda_{1}|| \ge 3 \), if \( ||\xi_{2}|| \ge 2 \) or if \( ||\xi_{2}|| = 1 \) and
  \( \xi_{2} \in \Gamma_{1} \), then the word \( \xi \) is not unbalanced; so \( \xi_{2} \in \Gamma_{2} \) and
  \( \lambda_{1} = \lambda_{2} \tilde{x}_{1}^{-1}\xi_{2}^{-1} \) and
  \[ \xi\lambda^{r} = \symform{\tilde{x}_{2}^{-1} \xi_{2} \tilde{x}_{1} \lambda_{2}^{-1} u^{-1} \xi_{2} \tilde{x}_{1}
    \lpart{\lambda_{2} \tilde{x}_{1}^{-1}} \xi_{2}^{-1} u \cdots \rpart{u \lambda_{2}} \tilde{x}_{1}^{-1} \xi_{2}^{-1} u
    \lambda_{2} \tilde{x}_{1}^{-1} \xi_{2}^{-1} \tilde{x}_{2}}.  \]
  This implies \( \lambda_{2} = \lambda_{2}^{-1} \) and \( u = \tilde{x}_{1} \).  Contradiction.

  \medskip

  \emph{Subcase: \( ||\lambda|| = 3 \).}  Let \( \lambda = \tilde{x}_{1} \tilde{y} \tilde{x}_{2}\), then
  \[ \xi \lambda^{r} = \symform{\lpart{\xi} \tilde{x}_{1} \tilde{y} u \tilde{y} u \cdots \rpart{u \tilde{y} u \tilde{y}
    \tilde{x}_{2}}}.  \]
  Therefore the reduced form of \( \xi \) start with
  \( \tilde{x}_{2}^{-1} \tilde{y}^{-1} u^{-1} \tilde{y}^{-1} u^{-1} \), and therefore cannot be unbalanced.

  \bigskip

  \emph{Case: \( \xi \) is of type \( (i,j) \) and \( \lambda \) is of type \( (j,j) \).}  In this case there has to be
  a ``full cancellation'': either the reduced form of \( \lambda^{r} \) starts with \( \xi^{-1} \) or the reduced form
  of \( \xi \) ends in the reduced form of \( \lambda^{-r} \).  For if this were not the case, then
  \( \xi = \xi_{1} x \xi_{2} \) and \( \lambda^{r} = \xi_{2}^{-1}\tilde{x} \lambda_{1} \) with
  \( x \cdot \tilde{x} \ne e \), hence \( \xi \lambda^{r} = \xi_{1} u \lambda_{1} \) is reduced with
  \( u = x \cdot \tilde{x} \), and therefore \( \xi \lambda^{r} \) has type \( (i,j) \), and in particular it is not in
  \( S^{c}_{\Gamma} \).

  \medskip

  \emph{Subcase: \( ||\lambda|| \ge ||\xi|| + 3 \).}  Let \( \xi = \xi_{1}x \),
  \( \lambda = x^{-1}\xi_{1}^{-1}\lambda_{1} \tilde{x}_{2} \), where \( x, \tilde{x}_{2} \ne e \) and
  \[ \xi \lambda^{r} = \symform{\lpart{\lambda_{1} u} \xi_{1}^{-1}\lambda_{1} u \cdots \xi_{1}^{-1}\rpart{\lambda_{1}
      \tilde{x}_{2}}}, \quad u = \tilde{x}_{2} \cdot x^{-1} \in \Gamma_{j} \setminus\{e\}.  \]
  One sees that \( \lambda_{1} = \tilde{x}_{2}^{-1} \lambda_{2} \), \( \lambda_{2}^{-1} = \lambda_{2} \),
  \( u = \tilde{x}_{2} \), and, as usually, this implies \( x^{-1} = e \).

  \medskip

  \emph{Subcase: \( ||\lambda|| = ||\xi|| + 1 \).}  In this subcase \( \xi = \xi_{1}x \) and
  \( \lambda = x^{-1}\xi_{1}^{-1}\tilde{x}_{2} \) and
  \[ \xi \lambda^{r} = \symform{\lpart{u\xi_{1}^{-1}} u\xi_{1}^{-1}u \cdots u \rpart{\xi_{1}^{-1}\tilde{x}_{2}}}.  \]
  Hence \( u = \tilde{x}_{2}^{-1} \) and \( \xi_{1}^{-1} = \xi_{1} \), which is a contradiction.

  \medskip

  \emph{Subcase: \( 5 \le ||\lambda|| \le ||\xi|| - 1 \).}  Let
  \( \lambda = \tilde{x}_{1} \tilde{y} \lambda_{1}\tilde{x}_{2} \) and
  \( \xi = \xi_{1} y u^{-1} \lambda_{1}^{-1} \tilde{y}^{-1} \tilde{x}_{1}^{-1} \), where
  \( u = \tilde{x}_{2} \cdot \tilde{x}_{1} \).  Note that \( ||\lambda_{1}|| \ge 2 \).
  As we argued at the beginning of the case, there has to a full cancellation, and we must have
  \( y \cdot \tilde{y} = e \), and therefore also \( y \in \Gamma_{2} \) (otherwise \( \xi \) is not unbalanced).  So
  \[  \xi \lambda^{r} = \xi_{1} \cdot \lambda_{1} u y^{-1} \lambda_{1} u \cdots u y^{-1} \lambda_{1} \tilde{x}_{2},  \]
  where the product is not necessarily reduced as the dot indicates.  If \( ||\xi_{1}|| > 0 \), then the last letter of
  \( \xi_{1} \) is in \( \Gamma_{1} \) (since \( y \in \Gamma_{2} \)); this letter has to cancel with the first letter
  of \( \lambda_{1} \), but this implies that \( \xi \) is not unbalanced.  Therefore \( ||\xi_{1}|| = 0 \) and
  \[ \xi \lambda^{r} = \symform{\lpart{\lambda_{1} u} \tilde{y} \lambda_{1} u \cdots u \tilde{y} \rpart{\lambda_{1}
      \tilde{x}_{2}}}, \]
  which implies \( \lambda_{1} = \tilde{x}_{2}^{-1} \lambda_{2} \), \( \lambda_{2} = \lambda_{2}^{-1} \),
  \( u = \tilde{x}_{2} \) and again \( \tilde{x}_{1} = e \), which is impossible.

  \medskip

  \emph{Subcase: \( ||\lambda|| = 3 \).}  Let \( \lambda = \tilde{x}_{1} \tilde{y} \tilde{x}_{2} \), and
  \( \xi = \xi_{1} z_{0} y u^{-1} \tilde{y}^{-1} \tilde{x}_{1}^{-1} \).  Since \( \xi \) is unbalanced, either
  \( y \cdot \tilde{y} \ne e \) or \( z_{0} \cdot u \ne e \), and therefore \( \xi \lambda^{r} \) has type \( (i,j) \)
  and is not in \( S^{c}_{\Gamma} \).

  \bigskip

  \emph{Case: both \( \xi \) and \( \lambda \) are of type \( (i,j) \)}.  In this case the product
  \( \xi \lambda^{r} \) is reduced and has type \( (i,j) \), and so is not in \( S^{c}_{\Gamma} \).
\end{proof}

\begin{lemma}
  \label{lem:balanced-times-anyelement-is-not-in-S}
  Let \( \xi \in \Gamma_{1}\freepr\Gamma_{2} \) be an unbalanced element of asymmetric type, \( ||\xi|| \ge 6 \).  If
  \( \gamma \not \in S^{c}_{\Gamma} \) and \( r \ge 4 \), then \( \xi \gamma^{r} \not \in S^{c}_{\Gamma} \).
\end{lemma}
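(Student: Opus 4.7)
The plan is to reduce the claim to Lemma~\ref{lem:balanced-times-power-of-reduced-not-is-in-S} by passing to the cyclic core of $\gamma$.  Since $\gamma \not\in S^c_\Gamma$, Lemma~\ref{lem:description-of-S-and-F}\eqref{item:description-S} together with Lemma~\ref{lem:core-of-a-word} implies that in the unique reduced decomposition $\gamma = \alpha \gamma_c \alpha^{-1}$ the cyclic core $\gamma_c$ is cyclically reduced with $\wlen{\gamma_c} \ge 2$.  Then $\gamma^r = \alpha \gamma_c^r \alpha^{-1}$ is still a reduced product, so $\xi\gamma^r = \xi\alpha\gamma_c^r\alpha^{-1}$.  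Because $S^c_\Gamma$ is a union of conjugacy classes (by the same item \eqref{item:description-S}), conjugating by $\alpha$ replaces $\xi\gamma^r$ by $\alpha^{-1}\xi\alpha\cdot\gamma_c^r = \eta\gamma_c^r$ where $\eta := \alpha^{-1}\xi\alpha$; it therefore suffices to show $\eta\gamma_c^r \not\in S^c_\Gamma$.

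If $\alpha = e$ then $\eta = \xi$ and Lemma~\ref{lem:balanced-times-power-of-reduced-not-is-in-S} applies immediately with $\lambda = \gamma_c$.  In general $\eta$ need not be unbalanced nor of asymmetric type, because the outer $\alpha^{-1}\cdots\alpha$ shell contributes new letters (coming from $\alpha$ and $\alpha^{-1}$) and symmetrises the endpoints.  I would therefore argue by contradiction: assume that $\xi\gamma^r = \beta z \beta^{-1}$ for some $\beta \in \Gamma$ and some $z \in X_\Gamma$, and then read off the reduced form of $\xi\alpha\gamma_c^r\alpha^{-1}$, handling separately the possible cancellations---and at most one merger---at the $\xi$-$\alpha$ boundary; by the choice of decomposition the two boundaries $\alpha\gamma_c^r$ and $\gamma_c^r\alpha^{-1}$ are already reduced.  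Cyclic reduction of the resulting word would strip off matched pairs $y_j$ against $a_j^{-1}$, then eat into the $\gamma_c^r$ block from the right and the $\alpha$ block from the left, until only the single letter $z$ remains.

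The principal step, in parallel with Lemma~\ref{lem:balanced-times-power-of-reduced-not-is-in-S}, is then to argue that this forced mirror symmetry about $z$, combined with $\wlen{\xi} \ge 6$ and $r \ge 4$, makes two copies of some letter from $\Gamma_1$---or a letter together with its inverse---appear among the letters of $\xi$, contradicting unbalancedness.  The hypothesis $r \ge 4$ is used to ensure that the $\gamma_c^r$ middle block is long enough that the outer $\alpha^{\pm 1}$ shells cannot meet across it in a degenerate way which would leave $\xi$'s interior unconstrained.  I expect the main obstacle to be the bookkeeping of subcases---sorted by the relative sizes of $\wlen{\xi}$, $\wlen{\alpha}$, and $r\wlen{\gamma_c}$ and by the location of any merger---in close analogy with the subcase gymnastics of the proof of Lemma~\ref{lem:balanced-times-power-of-reduced-not-is-in-S}.
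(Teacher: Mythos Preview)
Your setup---pass to the cyclic core $\gamma_c$, use conjugation-invariance of $S^c_\Gamma$, and split according to what happens at the $\xi$--$\alpha$ boundary---matches the paper's.  What you are missing is the observation that lets the paper \emph{avoid} redoing the case analysis of Lemma~\ref{lem:balanced-times-power-of-reduced-not-is-in-S}.

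First, the merger case is instantaneous: if $\xi = \xi_1 x \xi_2$ and $\alpha = \xi_2^{-1}\tilde x\alpha_1$ with $x\cdot\tilde x \ne e$, then $\xi\gamma^r = \xi_1 (x\tilde x)\, \alpha_1 \gamma_c^{\,r} \alpha_1^{-1}\tilde x^{-1}\xi_2$ is already reduced and has the asymmetric type of $\xi$, hence lies outside $S^c_\Gamma$; no mirror-symmetry argument is needed there.  Second, in the remaining cases (either $\xi\alpha$ is reduced, or one of $\xi,\alpha$ cancels the other completely) the hypothesis $\xi\gamma^r = \beta z\beta^{-1}$ forces one of $\xi,\alpha^{\pm 1}$ to be a prefix of the other.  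For instance, if $\xi\alpha$ is reduced and $\|\xi\|\ge\|\alpha\|$, comparing the two ends of $\beta z\beta^{-1}$ gives $\xi = \alpha\xi_1$, whence $\xi\gamma^r = \alpha\,(\xi_1\alpha)\,\gamma_c^{\,r}\,\alpha^{-1}$.  The point is that $\xi_1\alpha$ is a \emph{cyclic rotation} of the reduced word $\xi$, and is therefore still unbalanced, still of asymmetric type, and still of length $\ge 6$; Lemma~\ref{lem:balanced-times-power-of-reduced-not-is-in-S} now applies directly to the pair $(\xi_1\alpha,\gamma_c)$.  The subcases with $\|\xi\|<\|\alpha\|$ are handled by stripping the maximal power $\xi^{\pm K}$ from $\alpha$ and reducing to the previous situation.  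So the paper never re-enters the subcase gymnastics you anticipate; your plan of redoing that analysis with an extra $\alpha$-shell present might be made to work, but it is the long way around and your proposal does not actually carry it out.
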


\begin{proof}
  The proof is again by contradiction: assume that we do have \( \xi \), \( \gamma \) and \( r \) such that the
  assumptions hold and \( \xi \gamma^{r} \in S^{c}_{\Gamma} \).  Let \( \gamma = \alpha \lambda \alpha^{-1} \) with
  \( \lambda \) being cyclically reduced, \( ||\lambda|| \ge 2 \), and \( \xi\gamma^{r} = \beta z \beta^{-1} \) for some
  \( z \in X_{\Gamma} \).  First of all we note that it cannot happen that \( \xi = \xi_{1}x \xi_{2} \) and
  \( \alpha = \xi_{2}^{-1}\tilde{x}\alpha_{1} \) for some \( \xi_{1}, \xi_{2}, \alpha_{1} \in \Gamma \) and some
  non-trivial multipliable \( x, \tilde{x} \in X_{\Gamma} \) with \( x \cdot \tilde{x} \ne e \).  Indeed, if this were
  the case, then the product
  \[  \xi \gamma^{r} = \xi_{1}u\alpha_{1}\lambda^{r} \alpha_{1}^{-1}\tilde{x}^{-1}\xi_{2}, \quad u = x \cdot \tilde{x},  \]
  is reduced and therefore \( \xi \gamma^{r} \) has asymmetric type, and cannot be an element in \( S^{c}_{\Gamma} \).
  So either the product \( \xi \alpha \) is reduced, or one of \( \xi \), \( \alpha \) completely cancels the
  other one.  More formally, we have the following cases.

  \emph{Case: \( \xi \gamma^{r} \) is reduced.}  In this case the product \( \xi \alpha \lambda^{r} \alpha^{-1} \)
  is reduced (here \( \lambda^{r} \) is understood as a single element, not as a product).
  
  \emph{Subcase: \( ||\xi|| \ge ||\alpha|| \).}  For \( \xi \alpha \lambda^{r} \alpha^{-1} \) to be of the form
  \( \beta z \beta^{-1}\) we must have \( \xi = \alpha \xi_{1} \) and therefore
  \( \xi \gamma^{r} = \alpha \xi_{1} \alpha \lambda^{r} \alpha ^{-1} = \alpha( \xi_{1} \alpha \lambda^{r}) \alpha^{-1}
  \) .  Note that \( \xi = \alpha \xi_{1} \) is asymmetric and unbalanced if and only if so is \( \xi_{1} \alpha \).  We
  therefore apply Lemma \ref{lem:balanced-times-power-of-reduced-not-is-in-S} to \( \xi_{1} \alpha \) and \( \lambda \).

  \emph{Subcase: \( ||\xi|| < ||\alpha|| \).}  Let \( K\ge 0 \) be the largest such that
  \( \alpha = \xi^{K}\alpha_{1} \).  Then
  \( \xi \gamma^{r} = \xi \xi^{K}\alpha_{1} \lambda^{r} \alpha_{1}^{-1} \xi^{-K} = \xi^{K}(\xi \alpha_{1} \lambda ^{r}
  \alpha_{1}^{-1})\xi^{-K} \).  The set \( S^{c}_{\Gamma} \) is conjugation invariant, therefore we need to consider
  \( \xi \alpha_{1} \lambda^{r} \alpha_{1}^{-1} \) only.  We claim that \( ||\alpha_{1}|| < ||\xi|| \).  If this were
  not the case and \( ||\alpha_{1}|| \ge ||\xi|| \), then for \( \xi \alpha_{1} \lambda^{r} \alpha_{1}^{-1} \) to be of
  the form \( \beta z \beta^{-1} \) we must have \( \alpha_{1} = \xi \alpha_{2} \), contradicting the maximality of
  \( K \).  So \( ||\alpha_{1}|| < ||\xi|| \) and we are under the conditions of the previous subcase with
  \( \alpha_{1} \) in the place of \( \alpha \).

  \emph{Case: Full reduction.}

  \emph{Subcase: \( ||\xi|| \ge ||\alpha|| \) and therefore \( \xi = \xi_{1}\alpha^{-1} \).}  In this subcase
  \( \xi \gamma^{r} = \xi_{1} \cdot \lambda^{r} \alpha^{-1} \), where the product is not necessarily reduced, and
  therefore \( \alpha^{-1} \xi \gamma^{r} \cdot \alpha = \alpha^{-1}\xi_{1} \cdot \lambda^{r} \), where
  \( \alpha^{-1}\xi_{1} \) is asymmetric and unbalanced, because so is \( \xi = \xi_{1} \alpha^{-1} \). It remains to
  apply Lemma \ref{lem:balanced-times-power-of-reduced-not-is-in-S}.
  
  \emph{Subcase: \( ||\xi|| < ||\alpha|| \) and therefore \( \alpha = \xi^{-1}\alpha_{1} \)}.  Let \( K \ge 1 \) be
  maximal such that \( \alpha = \xi^{-K}\alpha_{2} \), then
  \( \xi \gamma^{r} = \xi^{-K+1}\alpha_{2}\lambda^{r}\alpha_{2}^{-1}\xi^{K} \).  It is enough to show that
  \( \xi \cdot \alpha_{2} \lambda^{r} \alpha_{2}^{-1} \not \in S^{c}_{\Gamma} \).  By maximality of \( K \) we conclude that
  \( ||\alpha_{2}|| \le ||\xi|| \) and therefore can apply the previous subcase with \( \alpha_{2} \) in the role of
  \( \alpha \).
\end{proof}

\section{Automatic continuity for homomorphisms into free products}
\label{sec:autom-cont-homom}

In this section we shall finally prove Theorem \ref{thm:continuity-into-free-products}.  As we have mentioned earlier,
the core of the argument is to show that there are no \emph{surjective} homomorphisms \( \phi : G \to \Gamma \) with
\emph{dense kernels}.  Depending on the structure of \( \Gamma \), we shall use somewhat different modifications of
Dudley's argument.  One of the special cases will be \( \Gamma = \mathbb{Z}_{2} \freepr \mathbb{Z}_{2} \).  This case
is, in fact, already known.  More precisely, Rosendal \cite[Theorem 3.2]{MR2849256} in a somewhat different terminology
has proved

\begin{theorem}[Rosendal]
  \label{thm:continuity-into-dihedral-group}
  For a completely metrizable topological group \( G \) and any homomorphism
  \( \phi : G \to \mathbb{Z}_{2}\freepr\mathbb{Z}_{2} \) one of the two possibilities hold: either \( \phi \) is
  continuous, or \( \phi(G) \) is contained in one of the factors.
\end{theorem}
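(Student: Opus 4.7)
The plan is to reduce, via the index-two rotation subgroup of $\mathbb{Z}_2 \freepr \mathbb{Z}_2$, to Dudley's theorem for the integers; this index-two structure is what compensates for the failure of the length inequality $\wlen{\gamma^n} \ge \max\{n,\wlen{\gamma}\}$ in the infinite dihedral case (cf.\ Remark \ref{rem:for-dihedral-S-squared-is-the-whole-group}).

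Every subgroup of $\mathbb{Z}_2 \freepr \mathbb{Z}_2$ is either cyclic or infinite dihedral, and every normal subgroup is either abelian (contained in the rotation subgroup $H = \langle ab \rangle \cong \mathbb{Z}$) or itself infinite dihedral. If $\phi(G)$ is cyclic, then either it lies in a factor (excluded by hypothesis) or it is an infinite cyclic subgroup of a conjugate of $H$, in which case $\phi$ is continuous by Dudley's theorem. Replacing the target by $\phi(G)$, we may therefore assume $\phi$ is a surjective homomorphism onto $\mathbb{Z}_2 \freepr \mathbb{Z}_2$.

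Let $N = \ker\phi$ and let $\closure{N}$ be its closure in $G$, so that $\closure{N}$ is a closed normal subgroup, hence completely metrizable, and $\phi(\closure{N})$ is a normal subgroup of $\mathbb{Z}_2 \freepr \mathbb{Z}_2$. If $\phi(\closure{N}) = \{e\}$, then $N = \closure{N}$ is closed and Corollary \ref{cor:closed-kernel-implies-continuity} delivers continuity of $\phi$. If $\phi(\closure{N})$ is a non-trivial abelian subgroup, then $\phi|_{\closure{N}} : \closure{N} \to \mathbb{Z}$ is a homomorphism from a completely metrizable group into $\mathbb{Z}$, continuous by Dudley's theorem, which makes $N$ open in $\closure{N}$; but $N$ is also dense and proper in $\closure{N}$, a contradiction. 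The remaining case is that $\phi(\closure{N})$ is infinite dihedral, which we now rule out.

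After replacing $G$ by $\closure{N}$ and the target by $\phi(\closure{N})$ (itself a copy of $\mathbb{Z}_2 \freepr \mathbb{Z}_2$), we are reduced to showing there is no surjective homomorphism $\phi : G \to \mathbb{Z}_2 \freepr \mathbb{Z}_2$ from a completely metrizable group with \emph{dense} kernel $N$. Following the inductive construction from the proof of Theorem \ref{thm:free-products-are-not-completely-metrizable}, build a sequence $g_m \in G$ with $g_m \to e$ sufficiently fast, along with positive integers $r_m$, such that the iterates $\widetilde{h}_{n,m}$ converge, and set $h_m = \lim_{n \to \infty} \widetilde{h}_{n,m}$, so $h_m = g_m h_{m+1}^{r_m}$. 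The two modifications are:
\begin{enumerate}[(a)]
\item Density of $N$ makes the coset $\phi^{-1}(ab)$ dense in $G$, so at every step we may choose $g_m \in \phi^{-1}(ab)$, ensuring $\phi(g_m) = ab \in H$.
\item The integers $r_m$ are taken to be \emph{even}, e.g.\ $r_m = 4m$.
\end{enumerate}
Projecting onto the quotient $(\mathbb{Z}_2 \freepr \mathbb{Z}_2)/H \cong \mathbb{Z}_2$ and using that $\phi(g_m) \in H$ together with evenness of $r_m$, one sees $\phi(h_m) \in H$ for every $m$, so $\phi(h_m) = (ab)^{c_m}$ for integers $c_m$ satisfying the recursion $c_m = 1 + r_m c_{m+1}$. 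Since $r_m \ge 2$ admits no integer solution with $c_m = 0$, one has $c_m \ne 0$, and hence $\wlen{\phi(h_m)^{r_{m-1}}} = 2|c_m|\, r_{m-1} \ge 2 r_{m-1}$. The standard Dudley telescoping then yields $\wlen{\phi(h_1)} \ge r_m - \sum_{l=1}^{m} \wlen{\phi(g_l)} = 4m - 2m = 2m$ for every $m$, which is absurd.

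The main obstacle is precisely what forces the paper to treat $\mathbb{Z}_2 \freepr \mathbb{Z}_2$ separately in the first place: the equality $S^c_\Gamma \cdot S^c_\Gamma = \Gamma$ of Remark \ref{rem:for-dihedral-S-squared-is-the-whole-group} makes the avoidance mechanism underlying Lemma \ref{lem:S-squared-has-empty-interior} unavailable. The substitute is the parity trick described above: by forcing $r_m$ even and $\phi(g_m)$ into the rotation subgroup, the iterates $\phi(h_m)$ are confined to the abelian subgroup $H$, where the length inequality used in Dudley's original proof holds by hand.
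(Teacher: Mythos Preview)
Your proof is correct. Note, however, that the paper does not give its own proof of Theorem~\ref{thm:continuity-into-dihedral-group} in full: it attributes the theorem to Rosendal and only supplies a self-contained argument for the weaker Lemma~\ref{lem:dense-kernel-surjective-dihedral} (no surjection from a completely metrizable group onto $\mathbb{Z}_2 \freepr \mathbb{Z}_2$ with dense kernel), which is all that is needed downstream.

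Your dense-kernel argument is essentially the same as the paper's proof of Lemma~\ref{lem:dense-kernel-surjective-dihedral}: choose $\phi(g_m)=ab$ and take $r_m$ even so that the recursion $\phi(h_m)=ab\cdot\phi(h_{m+1})^{r_m}$ forces $\phi(h_m)$ into the rotation subgroup $H=\langle ab\rangle$, after which the Dudley telescoping goes through. The only differences are cosmetic (you take $r_m=4m$ and use the quotient map to $\mathbb{Z}_2$, while the paper takes $r_m=2^m$ and argues by the $S_\Gamma$/$S^c_\Gamma$ dichotomy, observing that elements of $S^c_\Gamma$ have order~$2$).

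What you add beyond the paper is the reduction from the general statement to the dense-kernel case: you use the elementary classification of (normal) subgroups of $D_\infty$ together with Dudley's theorem for $\mathbb{Z}$ and Corollary~\ref{cor:closed-kernel-implies-continuity} to dispose of the cases where $\phi(G)$ is cyclic, where $\ker\phi$ is closed, and where $\phi(\closure{\ker\phi})$ is abelian. This reduction is clean and correct; it is in the spirit of what the paper later does for general free products in Theorem~\ref{thm:homomorphisms-into-free-products-are-continuous-or-contain-in-a-factor}, but specialised to $D_\infty$ so that Kurosh's theorem is replaced by the explicit subgroup structure.
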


In particular this implies that there does not exist a surjective homomorphism from a completely metrizable group onto
\( \mathbb{Z}_{2} \freepr \mathbb{Z}_{2} \) with a dense kernel.  We shall need this corollary and for the purpose of
completeness we give a proof of it using the setting of this paper.  This argument is, in fact, a reformulation of
Rosendal's original proof.

In the rest of the section \( G \) denotes a completely metrizable topological group.

\begin{lemma}
  \label{lem:dense-kernel-surjective-dihedral}
  Let \( \phi : G \to \mathbb{Z}_{2} \freepr \mathbb{Z}_{2} \) be a surjective homomorphism.  The kernel of \( \phi \)
  cannot be dense in \( G \).
\end{lemma}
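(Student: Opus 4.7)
The plan is to adapt Dudley's argument, exploiting the density of \( \ker \phi \) to dictate \( \phi(g_{m}) \) exactly.  Since \( \ker \phi \) is dense in \( G \), every coset \( \phi^{-1}(\gamma) \) is dense in \( G \); in particular \( \phi^{-1}(t) \) is dense, where \( t = ab \) in the presentation \( \mathbb{Z}_{2}\freepr\mathbb{Z}_{2} = \langle a,b \mid a^{2}=b^{2}=e\rangle \).  I will fix \( \phi(g_{m}) = t \) for every \( m \) and choose \( r_{m} \) to be an increasing sequence of \emph{even} positive integers with \( r_{m} - 2m \to \infty \) (for instance \( r_{m} = 4m \)).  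The motivation is the structural fact that every element of \( \mathbb{Z}_{2}\freepr\mathbb{Z}_{2} \) is either trivial, a nontrivial power of \( t \) (of infinite order), or of odd length (and hence of order \( 2 \)).

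The inductive construction of the \( g_{m} \) follows the scheme in the proof of Theorem \ref{thm:free-products-are-not-completely-metrizable}: at step \( n \), continuity of the evaluation maps \( \xi_{m} \) produces a neighborhood \( U \) of the identity so that any \( g_{n+1} \in U \) guarantees \( d(\widetilde{h}_{n+1,m},\widetilde{h}_{n,m}) < 2^{-n} \) for \( 1 \le m \le n+1 \); the only change is that, by density of \( \phi^{-1}(t) \), one can choose such a \( g_{n+1} \) with \( \phi(g_{n+1}) = t \).  Setting \( h_{m} = \lim_{n} \widetilde{h}_{n,m} \), we obtain \( h_{m} = g_{m} h_{m+1}^{r_{m}} \) and hence \( \phi(h_{m}) = t \cdot \phi(h_{m+1})^{r_{m}} \).

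The key invariant to verify is that \( \phi(h_{m}) \in \langle t \rangle \setminus \{e\} \) for every \( m \).  Indeed, if \( \phi(h_{m+1}) \) has odd length or equals \( e \), then \( \phi(h_{m+1})^{r_{m}} = e \) because \( r_{m} \) is even, so \( \phi(h_{m}) = t \); and if \( \phi(h_{m+1}) = t^{k} \) with \( k \ne 0 \), then \( \phi(h_{m}) = t^{1 + k r_{m}} \) with the exponent \( 1 + k r_{m} \) odd, in particular nonzero.  In either case \( \phi(h_{m}) \) is a nontrivial power of \( t \), giving both \( \wlen{\phi(h_{m})^{r_{m-1}}} \ge 2 r_{m-1} \ge r_{m-1} \) and \( \wlen{\phi(h_{m})^{r_{m-1}}} \ge \wlen{\phi(h_{m})} \), the two inequalities required for Dudley's chain.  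Since \( \sum_{l=1}^{m} \wlen{\phi(g_{l})} = 2m \), the choice of \( r_{m} \) gives the third required condition, and Dudley's chain then yields \( \wlen{\phi(h_{1})} \ge r_{m} - 2m \) for all \( m \), which is absurd.

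The main obstacle circumvented by this strategy is the identity \( S^{c}_{\Gamma} \cdot S^{c}_{\Gamma} = \Gamma \) valid for the infinite dihedral group (Remark \ref{rem:for-dihedral-S-squared-is-the-whole-group}), which prevents copying the set-theoretic trick of Theorem \ref{thm:free-products-are-not-completely-metrizable} verbatim.  The role previously played by the constraint \( g_{m} \notin S^{c}_{\Gamma} \cdot S^{c}_{\Gamma} \) is here replaced by the algebraic constraint \( \phi(g_{m}) = t \); thanks to the evenness of \( r_{m} \), this forces \( \phi(h_{m}) \) to land in the infinite cyclic subgroup \( \langle t \rangle \setminus \{e\} \), where length growth under powers is immediate.
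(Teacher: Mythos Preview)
Your proof is correct and follows essentially the same approach as the paper's own proof: both use the density of the kernel to force \(\phi(g_m)=t\) (the paper writes \(xy\)), choose the exponents \(r_m\) to be even so that any odd-length (order-two) element \(\phi(h_{m+1})\) is killed by the \(r_m\)-th power, and then observe that \(\phi(h_m)\) is always a nonzero power of \(t\), whence the Dudley chain yields the contradiction. The only difference is the inessential choice \(r_m=4m\) versus the paper's \(r_m=2^m\).
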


\begin{proof}
  Suppose the kernel of \( \phi \) were dense.  By surjectivity of \( \phi \) there is some \( g \in G \) such that
  \( \phi(g) = xy \), where \( x \) and \( y \) are the generators of the two factors in
  \( \mathbb{Z}_{2} \freepr \mathbb{Z}_{2} \).  Let \( K \) be the kernel of \( \phi \), and so \( gK \) is dense in
  \( G \).  We construct a sequence \( (g_{m})_{m=1}^{\infty} \) of elements in \( G \) such that \( \phi(g_{m}) = xy
  \), and the limit
  \[ h_{m} = \lim_{n \to \infty} g_{m}(g_{m+1}(\cdots (g_{n-1}(g_{n})^{r_{n-1}})^{r_{n-2}} \cdots)^{r_{m+1}})^{r_{m}} \]
  exists, where \( r_{m} = 2^{m} \).  We claim that \( ||\phi(h_{m})^{r_{m-1}}|| \ge r_{m-1} \) for any \( m \ge 2 \)
  and \( \phi(h_{m}) \in S_{\mathbb{Z}_{2}\freepr\mathbb{Z}_{2}} \).  By continuity \( h_{m} = g_{m}h_{m+1}^{r_{m}} \)
  and therefore \( \phi(h_{m})=xy\phi(h_{m+1})^{r_{m}} \).  If
  \( \phi(h_{m+1}) \in S^{c}_{\mathbb{Z}_{2} \freepr \mathbb{Z}_{2}}\), then \( \phi(h_{m+1}) \) has order \( 2 \) and
  therefore \( \phi(h_{m+1})^{r_{m}} = e \).  In this case \( ||\phi(h_{m})^{r_{m-1}}|| = (xy)^{r_{m-1}} = 2r_{m-1} \).
  If \( \phi(h_{m+1}) \in S_{\mathbb{Z}_{2} \freepr \mathbb{Z}_{2}} \), then \( \phi(h_{m+1}) = (xy)^{q} \) for some
  non-zero integer \( q \), and therefore \( ||\phi(h_{m})^{r_{m-1}}|| = ||(xy)^{r_{m-1}(qr_{m} +1)}|| \ge r_{m-1} \).
  This proves the claim and therefore \( ||\phi(h_{1})|| \ge m \) for any \( m \) by the same argument as the end of
  Theorem \ref{thm:free-products-are-not-completely-metrizable}.  This contradiction proves the lemma.
\end{proof}

\begin{lemma}
  \label{lem:dense-kernel-surjective-prime-order}
  Let \( \phi : G \to \Gamma \), where \( \Gamma = \Gamma_{1}\freepr\Gamma_{2} \), be a surjective homomorphism and
  assume that \( \Gamma_{1} \freepr \Gamma_{2} \ne \mathbb{Z}_{2} \freepr \mathbb{Z}_{2} \) and that
  \( \Gamma_{1}\freepr\Gamma_{2} \) does not have elements of order \( p \) for a prime \( p \).  The kernel
  of \( \phi \) cannot be dense in \( G \).
\end{lemma}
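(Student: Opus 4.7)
The plan is to run a Dudley-style construction essentially identical to the one in Theorem \ref{thm:free-products-are-not-completely-metrizable}, but now exploiting two features not available before: (a) the density of \( K = \ker\phi \) lets me pin every \( \phi(g_m) \) to a single well-chosen element \( \gamma_0 \in \Gamma \), and (b) the prime-\( p \) hypothesis replaces the distance-to-\( F_{r_{m-1}} \) estimate used in the earlier proof.  Suppose for contradiction that \( K \) is dense in \( G \).  Since \( \Gamma \ne \mathbb{Z}_2 \freepr \mathbb{Z}_2 \), Lemma \ref{lem:S-squared-is-not-Gamma} furnishes some \( \gamma_0 \in \Gamma \setminus S^c_\Gamma \cdot S^c_\Gamma \); since \( e \in S^c_\Gamma \), this automatically places \( \gamma_0 \in S_\Gamma \).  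Pick \( g_0 \in \phi^{-1}(\gamma_0) \), so the coset \( g_0 K = \phi^{-1}(\gamma_0) \) is dense in \( G \).

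I would then choose exponents \( r_m = p^{k_m} \) with \( k_m \) increasing fast enough that \( r_m > r_{m-1} + \wlen{\gamma_0} \) and \( r_m - m\wlen{\gamma_0} \to \infty \).  The key consequence of the hypothesis is that no element of \( \Gamma \) has order divisible by \( p \) (otherwise a suitable power would have order exactly \( p \)), and consequently \( \lambda^{r_m} = e \) for \( \lambda \in \Gamma \) forces \( \lambda = e \).  Next I would construct \( g_m \in g_0 K \) inductively exactly as in Theorem \ref{thm:free-products-are-not-completely-metrizable}: at each step, use continuity of the nested maps to find a sufficiently small neighborhood of the identity, and then pick \( g_{m+1} \) from its (nonempty, by density) intersection with \( g_0 K \); the resulting \( \widetilde{h}_{n,m} \) are Cauchy in \( n \) for every \( m \), with limits \( h_m \in G \) satisfying \( h_m = g_m h_{m+1}^{r_m} \) and \( \phi(g_m) = \gamma_0 \).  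Note the distance-to-\( F_{r_{m-1}} \) control is absent, since its role will be supplied below by the prime observation.

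The core claim is \( \wlen{\phi(h_m)^{r_{m-1}}} \ge r_{m-1} \) for every \( m \ge 2 \), which I would verify via the familiar trichotomy.  If \( \phi(h_m) \in S_\Gamma \), the bound holds by definition.  If both \( \phi(h_m) \) and \( \phi(h_{m+1}) \) lie in \( S^c_\Gamma \), then by Lemma \ref{lem:description-of-S-and-F}\eqref{item:description-S} the element \( \gamma_0 = \phi(h_m) \phi(h_{m+1})^{-r_m} \) lies in \( S^c_\Gamma \cdot S^c_\Gamma \), contradicting the choice of \( \gamma_0 \).  In the remaining case \( \phi(h_m) \in S^c_\Gamma \), \( \phi(h_{m+1}) \in S_\Gamma \), I would first observe \( \phi(h_m) \ne e \) (else \( \phi(h_{m+1})^{r_m} = \gamma_0^{-1} \), contradicting \( \wlen{\phi(h_{m+1})^{r_m}} \ge r_m > \wlen{\gamma_0} \)); the prime observation then yields \( \phi(h_m)^{r_{m-1}} \ne e \), so Lemma \ref{lem:description-of-S-and-F}\eqref{item:when-power-is-not-shorter} combined with the triangle inequality gives \( \wlen{\phi(h_m)^{r_{m-1}}} \ge \wlen{\phi(h_m)} \ge r_m - \wlen{\gamma_0} \ge r_{m-1} \).

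Iterating the chain of triangle inequalities and applications of Lemma \ref{lem:description-of-S-and-F}\eqref{item:when-power-is-not-shorter} exactly as at the end of the proof of Theorem \ref{thm:free-products-are-not-completely-metrizable} then yields \( \wlen{\phi(h_1)} \ge r_m - m\wlen{\gamma_0} \) for every \( m \), which is absurd.  The only genuinely new step — and the one I expect to need the most care — is the verification that the prime-\( p \) hypothesis really does force \( \phi(h_m)^{r_{m-1}} = e \) to be equivalent to \( \phi(h_m) = e \), so that Lemma \ref{lem:description-of-S-and-F}\eqref{item:when-power-is-not-shorter} can be applied uniformly in all the intermediate steps of the chain; everything else is a direct transcription of the earlier Dudley argument.
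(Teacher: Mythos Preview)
Your proposal is correct and follows essentially the same route as the paper: pick \( \gamma_0 \notin S^c_\Gamma \cdot S^c_\Gamma \) via Lemma~\ref{lem:S-squared-is-not-Gamma}, use density of the kernel to force \( \phi(g_m) = \gamma_0 \), take \( r_m \) to be powers of \( p \), and run the Dudley chain with the same trichotomy and the same appeal to Lemma~\ref{lem:description-of-S-and-F}\eqref{item:when-power-is-not-shorter}. The paper's specific choice \( r_m = p^{m + m\wlen{\gamma_0}} \) satisfies your conditions, and your extra verification that \( \phi(h_m) \ne e \) (hence \( \phi(h_m)^{r_{m-1}} \ne e \)) in the intermediate steps makes explicit something the paper leaves implicit in its ``as before''.
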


\begin{proof}
  Suppose the kernel of \( \phi \) were dense.  By Lemma \ref{lem:S-squared-is-not-Gamma} and surjectivity of \( \phi \)
  there is \( g \in G \) such that \( \phi(g) \not \in S^{c}_{\Gamma} \freepr S^{c}_{\Gamma} \).  Using the density of
  the kernel, and hence the density of any coset, we construct a sequence \( (g_{m})_{m=1}^{\infty} \) in \( G \) such
  that \( \phi(g_{m}) = \phi(g) \not \in S^{c}_{\Gamma} \cdot S^{c}_{\Gamma} \) and the limit
  \[ h_{m} = \lim_{n \to \infty} g_{m}(g_{m+1}(\cdots (g_{n-1}(g_{n})^{r_{n-1}})^{r_{n-2}} \cdots)^{r_{m+1}})^{r_{m}} \]
  exists, where \( r_{m} = p^{m + m||\phi(g)||} \).  We claim that \( ||\phi(h_{m})^{r_{m-1}}|| \ge r_{m-1} \) for all
  \( m \ge 2 \).  Indeed, if \( \phi(h_{m}) \in S_{\Gamma} \), then this is true by the definition of the set
  \( S_{\Gamma} \).  If \( \phi(h_{m}) \in S^{c}_{\Gamma} \), then
  \( \phi(g_{m}) \not \in S^{c}_{\Gamma} \cdot S^{c}_{\Gamma} \) implies \( \phi(h_{m+1}) \in S_{\Gamma} \) and
  therefore \( ||\phi(h_{m})|| = ||\phi(g_{m})\phi(h_{m+1})^{r_{m}}|| \ge r_{m} - ||\phi(g)|| \ge r_{m-1}\).  Finally,
  \( r_{m-1} \) is a power of \( p \) and therefore \( ||\phi(h_{m})^{r_{m-1}}|| \ge ||\phi(h_{m})|| \)  by item
  \eqref{item:when-power-is-not-shorter} of Lemma \ref{lem:description-of-S-and-F}, which proves
  the claim.  This inequalities imply \( ||\phi(h_{1})|| \ge m \) as before and the lemma follows.
\end{proof}

\begin{lemma}
  \label{lem:dense-kernel-surjectvie-unbalanced}
  Let \( \phi : G \to \Gamma \), where \( \Gamma = \Gamma_{1}\freepr\Gamma_{2} \), be a surjective homomorphism and
  assume there is an unbalanced \( \xi \in \Gamma_{1}\freepr\Gamma_{2} \) of asymmetric type and length
  \( ||\xi|| \ge 6 \).  The kernel of \( \phi \) cannot be dense in \( G \).
\end{lemma}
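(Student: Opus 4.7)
The plan is to adapt Dudley's telescoping construction following the template of Theorem~\ref{thm:free-products-are-not-completely-metrizable}, replacing the earlier choice ``$\phi(g_m)\not\in S_\Gamma^c\cdot S_\Gamma^c$'' by the concrete equality $\phi(g_m)=\xi$. Assume for contradiction that $K=\ker\phi$ is dense in $G$. By surjectivity I pick $g\in G$ with $\phi(g)=\xi$; then the coset $gK$ is also dense. Fix a complete metric $d\le 1$ on $G$ and set $r_m=m(1+\wlen{\xi})$, so that $r_m\ge 4$ for every $m\ge 1$ (since $\wlen{\xi}\ge 6$) and the ``Dudley deficit'' $r_m-m\wlen{\xi}=m$ is unbounded.

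Using density of $gK$ and continuity at the identity of the tower maps exactly as in Theorem~\ref{thm:free-products-are-not-completely-metrizable}, I inductively choose $g_m\in gK$ so that the sequence
\[ \widetilde{h}_{n,m}=g_m(g_{m+1}(\cdots(g_{n-1}g_n^{r_{n-1}})^{r_{n-2}}\cdots)^{r_{m+1}})^{r_m} \]
is Cauchy in $n$ for every $m$. The limits $h_m=\lim_n\widetilde{h}_{n,m}$ then satisfy $h_m=g_mh_{m+1}^{r_m}$, hence $\phi(h_m)=\xi\,\phi(h_{m+1})^{r_m}$. The crucial claim is that $\phi(h_m)\in S_\Gamma$ for every $m\ge 2$. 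Granted this, Lemma~\ref{lem:description-of-S-and-F}\eqref{item:when-power-is-not-shorter} yields $\wlen{\phi(h_k)^{r_{k-1}}}\ge\wlen{\phi(h_k)}$ for each $k\ge 2$ (because $\phi(h_k)\in S_\Gamma$ forces $\phi(h_k)^{r_{k-1}}\ne e$), while the definition of $S_\Gamma$ gives $\wlen{\phi(h_{m+1})^{r_m}}\ge r_m$. The telescoping chain from the end of the proof of Theorem~\ref{thm:free-products-are-not-completely-metrizable} then produces $\wlen{\phi(h_1)}\ge r_m-m\wlen{\xi}=m$ for every $m$, the desired contradiction.

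To prove the claim I read Lemma~\ref{lem:balanced-times-anyelement-is-not-in-S} contrapositively, applied with $\gamma=\phi(h_{m+1})$ and $r=r_m\ge 4$: it says that $\phi(h_m)=\xi\phi(h_{m+1})^{r_m}\in S_\Gamma^c$ implies $\phi(h_{m+1})\in S_\Gamma^c$. Iterating, if some $\phi(h_{m_0})\in S_\Gamma^c$ then $\phi(h_m)\in S_\Gamma^c$ for every $m\ge m_0$. By Lemma~\ref{lem:description-of-S-and-F}\eqref{item:description-S}, $S_\Gamma^c$ is closed under taking (in particular negative) powers, so $\phi(h_{m_0+1})^{-r_{m_0}}\in S_\Gamma^c$, and therefore
\[ \xi=\phi(g_{m_0})=\phi(h_{m_0})\cdot\phi(h_{m_0+1})^{-r_{m_0}}\in S_\Gamma^c\cdot S_\Gamma^c. \]
This contradicts Lemma~\ref{lem:asymmetric-unbalanced-is-not-in-Scomp-squared}, which forbids an unbalanced asymmetric element of length at least $6$ from lying in $S_\Gamma^c\cdot S_\Gamma^c$. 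So no such $m_0$ exists, and the claim holds.

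The main obstacle is not in the argument above but inside Lemma~\ref{lem:balanced-times-anyelement-is-not-in-S}, whose extensive case analysis handles all possible interactions between the type of $\xi$ and the cyclically reduced core of $\gamma$; the unbalancedness and length hypotheses on $\xi$ are precisely what rule out the awkward configurations. Once that combinatorial input is available, the contrapositive-plus-bookkeeping argument above is essentially the shortest bridge back to Dudley's original trick.
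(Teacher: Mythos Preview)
Your proof is correct and follows essentially the same approach as the paper's: the same choice of $r_m=m(1+\wlen{\xi})$, the same telescoping construction from a dense coset, and the same combination of Lemma~\ref{lem:asymmetric-unbalanced-is-not-in-Scomp-squared} and Lemma~\ref{lem:balanced-times-anyelement-is-not-in-S} to force $\phi(h_m)\in S_\Gamma$ for all $m\ge 2$. The paper is terser at that step, merely asserting that the two lemmas together yield $\phi(h_m)\in S_\Gamma$; your contrapositive argument (if $\phi(h_{m_0})\in S_\Gamma^c$ then also $\phi(h_{m_0+1})\in S_\Gamma^c$, whence $\xi\in S_\Gamma^c\cdot S_\Gamma^c$) is precisely the intended unpacking, though note the full iteration ``for every $m\ge m_0$'' is unnecessary since you only use $m_0$ and $m_0+1$.
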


\begin{proof}
  Suppose the kernel of \( \phi \) were dense.  Let \( \xi \in \Gamma_{1}\freepr\Gamma_{2} \) be an unbalanced element of
  asymmetric type and of length at least \( 6 \).  By surjectivity of \( \phi \) we can pick \( g \in G \) such that
  \( \phi(g) = \xi \).  Using the density of the kernel construct a sequence
  \( (g_{m})_{m=1}^{\infty} \) such that \( \phi(g_{m}) = \xi \) and the limit
  \[ h_{m} = \lim_{n \to \infty} g_{m}(g_{m+1}(\cdots (g_{n-1}(g_{n})^{r_{n-1}})^{r_{n-2}} \cdots)^{r_{m+1}})^{r_{m}} \]
  exists, where \( r_{m} = m + m||\xi|| \).  Since \( \xi \) is of asymmetric type, unbalanced and \( ||\xi|| \ge 6 \),
  Lemma \ref{lem:asymmetric-unbalanced-is-not-in-Scomp-squared} implies
  \( \xi \not \in S^{c}_{\Gamma} \cdot S^{c}_{\Gamma}\).  Since \( \phi(h_{m}) = \xi \phi(h_{m+1})^{r_{m}} \), Lemma
  \ref{lem:balanced-times-anyelement-is-not-in-S} then implies \( \phi(h_{m}) \in S_{\Gamma} \) for all \( m \ge 2 \),
  and we again arrive at an absurd inequality \( ||\phi(h_{1})|| \ge m \) for all \( m \).
\end{proof}

Before the final spurt we would like to recall classical Kurosh's subgroup theorem:

\begin{theorem}[Kurosh]
  \label{thm:Kurosh-subgroup-theorem}
  Any subgroup \( \Gamma' \) of a free product \( \Gamma = \freeprod \) can be written as
  \( F(Z) \freepr \Bigl(\fp{b}{B}{\Gamma'}\Bigr) \), where \( Z \subseteq \Gamma \) generates a free group \( F(Z) \)
  and for each \( b \in B \) there are \( a \in A \) and \( \gamma_{b} \in \Gamma \) such that
  \( \Gamma_{b}' = \gamma_{b} \Lambda_{b} \gamma_{b}^{-1} \) and \( \Lambda_{b} < \Gamma_{a} \).
\end{theorem}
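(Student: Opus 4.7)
The plan is to prove this via Bass-Serre theory. First I would realize \( \Gamma = \freeprod \) as the fundamental group of a graph of groups \( \mathcal{G} \) whose underlying graph is a ``star'': one central vertex \( v_{0} \) with trivial vertex group, one peripheral vertex \( v_{a} \) for each \( a \in A \) with vertex group \( \Gamma_{a} \), and one edge from \( v_{0} \) to \( v_{a} \) carrying the trivial edge group. The associated Bass-Serre tree \( T \) then carries a natural \( \Gamma \)-action in which all edge stabilizers are trivial and the nontrivial vertex stabilizers are precisely the conjugates of the factors \( \Gamma_{a} \).

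Next I would restrict the \( \Gamma \)-action on \( T \) to the subgroup \( \Gamma' \leq \Gamma \). After a barycentric subdivision if needed, \( \Gamma' \) acts on \( T \) without edge inversions. Edge stabilizers remain trivial (being subgroups of trivial groups), and each vertex stabilizer has the form \( \Gamma' \cap \gamma \Gamma_{a} \gamma^{-1} = \gamma \Lambda \gamma^{-1} \) for some \( a \in A \), some \( \gamma \in \Gamma \), and some \( \Lambda < \Gamma_{a} \) determined by the \( \Gamma' \)-orbit of the vertex.

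Then I would invoke the fundamental theorem of Bass-Serre theory: a group acting on a tree without edge inversions is canonically isomorphic to the fundamental group of the quotient graph of groups \( \Gamma' \backslash T \), whose vertex and edge groups are read off as stabilizers of chosen orbit representatives. Because every edge group in this quotient is trivial, the fundamental group collapses into the free product of the nontrivial vertex groups with a free group \( F(Z) \) of rank equal to the rank of the fundamental group of the underlying quotient graph, namely one free generator for each edge outside a chosen spanning tree. Collecting nontrivial vertex groups along the spanning tree produces the indexing set \( B \), the elements \( \gamma_{b}, \Lambda_{b} \), and the desired decomposition \( \Gamma' = F(Z) \freepr \bigl(\fp{b}{B}{\Gamma'}\bigr) \).

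The main obstacle is bookkeeping rather than mathematics: selecting a spanning tree of \( \Gamma' \backslash T \) when the quotient graph may be infinite (requiring Zorn's lemma), and choosing the coset representatives \( \gamma_{b} \) coherently so that the conjugation relation \( \Gamma'_{b} = \gamma_{b} \Lambda_{b} \gamma_{b}^{-1} \) holds with \( \Lambda_{b} < \Gamma_{a} \). An alternative route is the original combinatorial proof via double coset representatives and normal forms in \( \freeprod \); this avoids Bass-Serre machinery but is considerably more intricate, whereas the tree-action argument reduces the theorem to the essentially trivial observation that a graph of groups with trivial edge groups has fundamental group equal to the free product of its vertex groups times a free group.
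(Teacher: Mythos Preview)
Your Bass--Serre argument is correct and is the standard modern proof of Kurosh's theorem: realize \( \Gamma \) as acting on the Bass--Serre tree for the star-shaped graph of groups with trivial edge groups, restrict to \( \Gamma' \), and read off the decomposition from the quotient graph of groups; since all edge groups are trivial the fundamental group is a free product of the vertex stabilizers with a free group coming from the topology of the quotient graph. The bookkeeping you flag (choosing a spanning tree via Zorn, lifting orbit representatives to get the \( \gamma_{b} \)) is real but routine.

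However, there is nothing to compare against: the paper does not prove this statement. It merely recalls Kurosh's theorem as a classical result (``Before the final spurt we would like to recall classical Kurosh's subgroup theorem'') and then uses it as a black box in the proof of Theorem~\ref{thm:homomorphisms-into-free-products-are-continuous-or-contain-in-a-factor}. So your proposal supplies a proof where the paper supplies none; what you wrote is a perfectly good justification, and in fact the Bass--Serre approach is the one most readers would expect for this result.
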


\begin{theorem}
  \label{thm:homomorphisms-into-free-products-are-continuous-or-contain-in-a-factor}
  If \( \phi : G \to \Gamma \) is a homomorphism from a completely metrizable group \( \Gamma \) into a discrete group
  \( \Gamma = \freeprod \), then either \( \phi \) is continuous or the image of \( \phi \) is contained in one of the
  factors of \( \Gamma \).
\end{theorem}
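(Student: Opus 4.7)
The plan is to reduce the problem to the case of a surjection onto a non-trivial two-factor free product with \emph{dense} kernel, where the three dense-kernel Lemmas \ref{lem:dense-kernel-surjective-dihedral}, \ref{lem:dense-kernel-surjective-prime-order}, and \ref{lem:dense-kernel-surjectvie-unbalanced} apply.

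First I would apply Theorem \ref{thm:Kurosh-subgroup-theorem} to $\phi(G)\le \Gamma$, writing $\phi(G)$ as a free product of a free group $F(Z)$ and conjugates $\gamma_{b}\Lambda_{b}\gamma_{b}^{-1}$ with $\Lambda_{b}\le \Gamma_{a(b)}$. If this decomposition is degenerate---meaning $\phi(G)$ is contained in a single conjugated factor or is infinite cyclic---the conclusion follows at once: in the first case $\phi(G)\subseteq \gamma_{b}\Gamma_{a(b)}\gamma_{b}^{-1}$, and in the second Dudley's Theorem \ref{thm:Dudley-theorem} makes $\phi$ continuous as a homomorphism into the free group $\mathbb{Z}$. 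Otherwise I group the Kurosh pieces into two non-trivial factors $H_{1},H_{2}$, reducing the problem to showing that the surjection $\phi\colon G\to H_{1}\freepr H_{2}$ is continuous.

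Next set $K=\ker \phi$. If $K$ is closed in $G$, Corollary \ref{cor:closed-kernel-implies-continuity} finishes the proof, so assume $K$ is not closed. Let $\overline{K}$ denote its closure and put $N=\phi(\overline{K})$, a normal subgroup of $H_{1}\freepr H_{2}$. The inclusion $K\subseteq \overline{K}$ yields $\phi^{-1}(N)=\overline{K}\cdot K=\overline{K}$, whence $N\ne\{e\}$ since $\overline{K}\supsetneq K$. The restriction $\phi|_{\overline{K}}\colon \overline{K}\to N$ is then a surjection from the completely metrizable group $\overline{K}$ (closed in $G$) onto $N$ with \emph{dense} kernel $K$. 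Moreover, $N$ is itself a non-trivial free product: if it sat inside a conjugate $\gamma H_{i}\gamma^{-1}$, normality would upgrade this to $N\le H_{i}$, and conjugating by any non-trivial $h\in H_{j}$ with $j\ne i$ would force $N\le H_{i}\cap hH_{i}h^{-1}=\{e\}$, a contradiction. Kurosh applied to $N\le H_{1}\freepr H_{2}$ followed by regrouping therefore yields $N=N_{1}\freepr N_{2}$ with both factors non-trivial.

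At this point the dense-kernel lemmas finish the job by a three-way split on $N$: Lemma \ref{lem:dense-kernel-surjective-dihedral} handles $N\cong \mathbb{Z}_{2}\freepr\mathbb{Z}_{2}$; Lemma \ref{lem:dense-kernel-surjective-prime-order} handles the case in which some prime $p$ is missing from the orders of elements of $N$; and otherwise every prime occurs, so, since orders in a free product arise from the factors, infinitely many prime orders are concentrated in one of $N_{1},N_{2}$, producing at least three distinct inverse-pair classes of elements of order greater than two in that factor. Interleaving three such letters with arbitrary non-trivial letters from the other factor yields an unbalanced asymmetric element of length at least $6$, and Lemma \ref{lem:dense-kernel-surjectvie-unbalanced} supplies the final contradiction. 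The main obstacle will be verifying the exhaustiveness of this three-way split---in particular, arranging the grouping of the Kurosh factors of $N$ so that the ``every prime appears'' regime genuinely produces the required unbalanced word---which is the combinatorial heart of the argument.
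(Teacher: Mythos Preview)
Your argument is essentially the paper's, with the two Kurosh applications reordered: you decompose $\phi(G)$ first and $N=\phi(\overline{K})$ second, while the paper handles $\phi(\overline{K})$ first (showing via the three dense-kernel lemmas that it must sit in a single conjugated factor, then using normality of $\overline{K}$ to contradict surjectivity) and only at the end applies Kurosh to $\phi(G)$. Both routes rest on the same ingredients---Kurosh, normality of $\overline{K}$, the three lemmas, Dudley---and the normality argument you give for $N$ is exactly the one the paper uses.

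One omission to fill: from ``$N\ne\{e\}$ and $N$ not contained in any $\gamma H_{i}\gamma^{-1}$'' you cannot yet conclude $N=N_{1}\freepr N_{2}$, since Kurosh still permits the free part to be $N\cong\mathbb{Z}$. But then Dudley's Theorem~\ref{thm:Dudley-theorem} makes $\phi|_{\overline{K}}\colon\overline{K}\to\mathbb{Z}$ continuous, so its dense kernel is all of $\overline{K}$ and $N=\{e\}$, a contradiction; the paper invokes Dudley at the analogous spot to exclude the free Kurosh factor. Your closing worry about exhaustiveness is unnecessary: however you regroup the Kurosh pieces of $N$ into two factors, if every prime order occurs then pigeonhole forces infinitely many prime orders into one factor, and three elements there of distinct odd prime order already furnish the required unbalanced asymmetric word of length~$6$.
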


\begin{proof}
  Let \( K \) denote the kernel of \( \phi \). The closure of the kernel \( \closure{K} \) is also a completely
  metrizable group and we may consider the restriction of \( \phi \) onto \( \closure{K} \).  If \( \closure{K} = K \),
  then Theorem \ref{cor:closed-kernel-implies-continuity} implies \( \phi \) is continuous and we assume that
  \( \closure{K} \ne K \).  In this case the group \( \Gamma' = \phi(\closure{K}) \) is non-trivial and we can apply
  Kurosh's subgroup theorem to write \( \Gamma' = F(Z)\freepr(\fp{b}{B}{\Gamma'}) \).  The homomorphism
  \( \phi : \closure{K} \to \Gamma' \) is surjective and has a dense kernel.  We claim that \( \phi(\closure{K}) \) is
  contained in a factor of \( \Gamma' \); in other words we claim that \( \Gamma' \) has only one factor.  Suppose this
  is false.  Pick two factors of \( \Gamma' \) and let \( \pi \) be the canonical projection from \( \Gamma' \) onto the
  free product of these two factors.  The map \( \pi \circ \phi : \closure{K} \to \pi(\Gamma') \) is surjective, has a
  dense kernel and its image is a free product of two nontrivial groups.  By Lemma
  \ref{lem:dense-kernel-surjective-dihedral} the group \( \pi(\Gamma') \) is not the infinite dihedral group and by
  Lemma \ref{lem:dense-kernel-surjectvie-unbalanced} there are no unbalanced asymmetric words of length at least \( 6 \)
  in \( \pi(\Gamma') \).  But then there can be at most \( 4 \) elements of order bigger than \( 2 \) in each of the
  factors of \( \pi(\Gamma') \), for if there were \( 5 \) such elements in say \( \Gamma_{1} \), then one could select
  three of them \( x_{1}, x_{2}, x_{3} \) such that \( x_{i} \ne x_{j}^{\pm 1}\) for \( i \ne j \) and so
  \( x_{1}yx_{2}yx_{3}y \) is an asymmetric unbalanced element of length \( 6 \) for any non-trivial
  \( y \in \Gamma_{2} \).  Hence there must be a prime \( p \) such that neither \( \Gamma_{1} \) nor \( \Gamma_{2} \)
  have elements of order \( p \) and therefore Lemma \ref{lem:dense-kernel-surjective-prime-order} applies and gives a
  contradiction.

  Thus \( \phi(\closure{K}) \) is contained in a factor of \( \Gamma' \), which by Dudley's Theorem
  \ref{thm:Dudley-theorem} cannot be the free factor, therefore \( \phi(\closure{K}) \) is contained in a factor of
  \( \Gamma \), i.e., \( \phi(\closure{K}) \subseteq \gamma \Gamma_{a_{0}} \gamma^{-1} \) for some
  \( \gamma \in \Gamma \) and some \( a_{0} \in A \).  We can write \( \Gamma = \fp{a}{A}{\widetilde{\Gamma}} \), where
  \( \widetilde{\Gamma}_{a} = \gamma \Gamma_{a} \gamma^{-1} \), so there is no loss in generality to assume that
  \( \gamma = e \).

  Assume first that \( \phi : G \to \Gamma \) is surjective.  We claim that in this case \( \Gamma = \Gamma_{a_{0}} \).
  Indeed, if \( \card{A} \ge 2 \) and there is \( b \in A \) such that \( b \ne a_{0} \), then pick \( g \in G \) such
  that \( \phi(g) \in \Gamma_{b}\setminus\{e\} \) and choose an \( h \in \closure{K}\setminus K \).  Since
  \( \closure{K} \) is a normal subgroup of \( G \), we must have \( \phi(ghg^{-1}) \in \Gamma_{a_{0}} \), but
  \( \phi(g)\phi(h)\phi(g)^{-1} = x_{b}x_{a_{0}}x_{b}^{-1} \) for non-trivial \( x_{a_{0}} \in \Gamma_{a_{0}} \) and
  \( x_{b} \in \Gamma_{b} \).  This is a contradiction.

  So the theorem is proved under the assumption that \( \phi \) is surjective.  If \( \phi \) is not
  surjective, let \( \Gamma' \) be its image.  Using Kurosh's subgroup theorem and applying what we have proved for the
  now surjective \( \phi : G \to \Gamma' \) we derive that either \( \phi \) is continuous (in which case it is also
  continuous viewed as a map \( \phi : G \to \Gamma \)) or the image of \( \phi \) is contained in a factor of
  \( \Gamma' \).  But any factor of \( \Gamma' \) is contained up to a conjugation in a factor of \( \Gamma \), except
  possibly for the free group factor \( F(Z) \).  But if the image of \( \phi \) is a free group, then \( \phi \) is
  continuous by Dudley's Theorem \ref{thm:Dudley-theorem}.
\end{proof}

\bibliographystyle{alpha}
\bibliography{automatic-continuity}

\end{document}